\documentclass[11pt,twoside,reqno]{amsart}

\usepackage[a4paper,margin=1in]{geometry}
\usepackage{amsmath,amssymb,amsthm,mathtools,bm,mathrsfs}
\usepackage{graphicx}
\usepackage{enumitem}
\usepackage{fancyhdr}
\usepackage[colorlinks=true,linkcolor=blue,citecolor=blue,urlcolor=blue]{hyperref}

\fancypagestyle{plain}{%
  \fancyhf{}%
  \fancyfoot[C]{\thepage}%
}

\numberwithin{equation}{section}

\theoremstyle{plain}
\newtheorem{proposition}{Proposition}[section]
\newtheorem{theorem}[proposition]{Theorem}
\newtheorem{lemma}[proposition]{Lemma}
\newtheorem{corollary}[proposition]{Corollary}
\theoremstyle{remark}
\newtheorem{remark}[proposition]{Remark}

\newcommand{\T}{\mathbb T}
\newcommand{\R}{\mathbb R}
\newcommand{\dd}{\,\mathrm d}
\newcommand{\Div}{\operatorname{div}}
\newcommand{\grad}{\operatorname{grad}}
\newcommand{\PP}{\mathbb P}
\newcommand{\Kcal}{\mathcal K}
\newcommand{\Om}{\boldsymbol\Omega}

\title[Floquet instability near a stationary Euler flow]
{Floquet instability near a stationary Euler flow in 3D}
\author[Mimi Dai]{Mimi Dai}
\address{Department of Mathematics, Statistics and Computer Science,
University of Illinois at Chicago, Chicago, IL 60607, USA}
\email{mdai@uic.edu}
\subjclass[2020]{Primary 35Q31; Secondary 35B35, 76B47}
\keywords{Three-dimensional incompressible Euler equations, compactly
supported stationary flows, linear instability, Floquet theory, geometric optics,
bicharacteristic-amplitude systems, action--angle coordinates}
\date{}

\begin{document}
\begin{abstract}
We study the three-dimensional (3D) incompressible Euler equation linearized about a smooth compactly supported stationary flow of Gavrilov type.  Baldi's volume-preserving action--angle chart allows a reduction of the high-frequency amplitude equation on invariant tori.  Using Baldi’s chart, we identify a critical torus ensured by the compact cutoff. We show that, for a fixed axisymmetric covector with sufficiently large normal component, the reduced trace-free periodic cocycle on this torus is hyperbolic and has a positive Floquet exponent.  A localized geometric-optics construction then lifts this Floquet instability to the linearized Euler equation, yielding exponential essential-norm growth in space $H^s$, $s\geq 0$. 
\end{abstract}

\maketitle

\begingroup
\setlength{\parskip}{0pt}
\tableofcontents
\endgroup

\section{Introduction and main results}
\label{sec-intro}

Consider the incompressible Euler equations on $\R^3$
\begin{equation}
  \partial_t u+(u\cdot\nabla) u+\nabla p=0,
  \qquad \Div u=0
  \label{eq:euler}
\end{equation}
where $u$ denotes the velocity field and $p$ the scalar pressure function. 
Gavrilov \cite{Gavrilov2019} constructed a nontrivial smooth compactly supported steady solution of the 3D Euler equations, and Constantin, La and Vicol \cite{ConstantinLaVicol2019} gave a localization framework for the same Grad--Shafranov structure.  Baldi \cite{Baldi2024} later showed that the particle dynamics on small active tori admits a volume-preserving
action--angle representation.  The purpose of this paper is to study the stability of the linearized Euler equation around such compactly supported steady states.

\subsection{The localized equilibrium}
\label{sec-localized}

Gavrilov \cite{Gavrilov2019} proved the existence of a nonzero velocity
 $ U\in C_0^\infty(\R^3;\R^3)$
that solves the stationary 3D Euler equations,
\begin{equation*}
 (U\cdot \nabla) U+\nabla P=0,
  \qquad \Div U=0
\end{equation*}
with steady pressure $P$. Moreover, the support of $U$ can be placed in an arbitrarily small neighborhood of a circle.  The solution is axisymmetric and has nonzero swirl. It is known that a smooth compactly supported axisymmetric steady flow without swirl must vanish.

The construction begins with a local analytic solution near a circle
$\{r=R,z=0\}$ in cylindrical coordinates.  Gavrilov builds a scalar function
$a=a(r,z)$ with a strict local minimum on that circle and defines an uncut
local pair $(U_0,P_0)$ by
\begin{equation*}
  P_0=\frac{R^4}{4}a,
  \qquad
  U_0=\frac1r\bigl((P_0)_z e_r-(P_0)_r e_z+b(a)e_\varphi\bigr),
\end{equation*}
where $b(a)$ is chosen through an auxiliary analytic ODE so that the steady
Euler equations hold.  The pair $(U_0, P_0)$ satisfies
\begin{equation}
  U_0\cdot\nabla P_0=0,
  \qquad
  |U_0|^2=3P_0.
  \label{eq:Gavrilov-first-integrals}
\end{equation}
For smooth $\chi$, denote
\begin{equation}
  U=\chi(P_0)U_0,
  \qquad
  P=\mathcal W(P_0),
  \qquad \mathcal W'(\cdot)=\chi(\cdot)^2.
  \label{eq:localized-pressure-primitive}
\end{equation}
Then
\begin{equation}
  \nabla P=\chi(P_0)^2\nabla P_0
  \label{eq:Gavrilov-localization}
\end{equation}
and $(U,P)$ is again a steady solution of the incompressible Euler equations.  The uncut local
formula is not smooth on the central circle due to the swirl factor.
Gavrilov chooses $\chi$ supported in a thin interval of positive $P_0$-values,
so the cutoff removes both that circle and the exterior of the local
coordinate neighborhood.  The final velocity is smooth and supported in a
thin toroidal shell.  Thus the compact support is obtained by cutting along
invariant $P_0$-level surfaces. 

Constantin, La, and Vicol \cite{ConstantinLaVicol2019} recast Gavrilov's insight as a systematic
Grad--Shafranov construction.  For the
axisymmetric uncut ansatz
\begin{equation*}
  U_0=\frac1r\bigl(\psi_z e_r-\psi_r e_z+F(\psi)e_\varphi\bigr),
\end{equation*}
the stationary Euler equations reduce to the Grad--Shafranov equation
\begin{equation}
  -(\partial_r^2-\frac1r\partial_r+\partial_z^2)\psi
  =\partial_\psi\!\left(\frac{F(\psi)^2}{2}+r^2\Pi(\psi)\right).
  \label{eq:CLV-GS-equation}
\end{equation}
A key point of \cite{ConstantinLaVicol2019} is that the \emph{localizability constraint}
\begin{equation}
  \frac{|U_0|^2}{2}=A(\psi)
  \label{eq:CLV-localizability}
\end{equation}
makes the pressure $P_0=-\Pi(\psi)-A(\psi)$ a function of $\psi$.  Since
$U_0\cdot\nabla\psi=0$, it shows $U_0\cdot\nabla P_0=0$ and hence allows the
same cutoff in the pressure $P_0$ as in
\eqref{eq:Gavrilov-localization}.  The coupled
system of \eqref{eq:CLV-GS-equation} and
\eqref{eq:CLV-localizability} is overdetermined.  They solve it locally by a
hodograph transformation, reducing compatibility to analytic ODEs for the
functions $A$, $\Pi$, and $F^2$.  The stream function has a strict local
minimum at a toroidal core.  Before cutoff the associated velocity is only
H\"older at the core and its vorticity is bounded; a cutoff in $P_0$ supported
away from the core produces the required $C_0^\infty$ steady Euler flow in a
toroidal shell.

The same localizable building block can be translated, rotated, and rescaled,
and disjoint copies can be superposed because their velocity and pressure
supports do not interact.  Constantin, La and Vicol apply this observation to
construct stationary solutions with prescribed H\"older thresholds, including
locally smooth $L^2\cap C^{1/3}$ examples that fail to lie in any
$C^\beta$, $\beta>1/3$, and can have arbitrarily large
$\|\,|\nabla U||U|^2\,\|_{L^\infty}$ while their local energy dissipation
vanishes.  

More recently, Peralta-Salas and Slobodeanu
\cite[Theorem~1.1]{PeraltaSalasSlobodeanu2026} proved axisymmetry and
convex toroidal geometry for localizable steady Euler flows that are
analytic in a smooth bounded domain and $C^1$ up to its boundary.
Their result assumes that the velocity is tangent to the boundary and
that the Bernoulli function is constant on each boundary component, with
nonvanishing gradient there.

\subsection{Baldi's description of Gavrilov’s particle dynamics}
\label{sec-Baldi}

Baldi's main result \cite{Baldi2024} is an explicit description for
the Lagrangian trajectories of Gavrilov's localized steady Euler flows near
the central circle.  After normalizing the radius of that circle to one, Baldi
constructs an analytic diffeomorphism
\begin{equation*}
  \Phi:\T^2\times(0,I^*)\longrightarrow S^*
\end{equation*}
that associates the particle equation $\dot x=U(x)$ to
\begin{equation}
  \dot\sigma=\Omega_1(I),\qquad
  \dot\beta=\Omega_2(I),\qquad
  \dot I=0.
  \label{eq:Baldi-integrable-system}
\end{equation}
Consequently, a particle with initial data
$x_0=\Phi(\sigma_0,\beta_0,I_0)$ follows the trajectory
\begin{equation*}
  x(t)=\Phi\bigl(\sigma_0+\Omega_1(I_0)t,
  \beta_0+\Omega_2(I_0)t,I_0\bigr).
\end{equation*}
The pair $(\sigma,I)$ is an angle--action pair for the one-degree-of-freedom
Hamiltonian subsystem describing motion in a meridional plane.  The second
angle $\beta$ is obtained from the physical azimuthal angle by subtracting a
periodic libration.  

The action variable has both a dynamical and a geometric meaning.  Baldi
proves that it is a reparametrization of the uncut analytic pressure $P_0$:
\begin{equation}
  P_0\bigl(\Phi(\sigma,\beta,I)\bigr)=\Kcal(I).
  \label{eq:Baldi-pressure-action}
\end{equation}
The pressure $P$ of the localized equilibrium is instead
\begin{equation}
  P\bigl(\Phi(\sigma,\beta,I)\bigr)=\mathcal W(\Kcal(I))
  \qquad \mbox{with} \quad \mathcal W'(\cdot)=\chi(\cdot)^2.
  \label{eq:localized-pressure-action}
\end{equation}
Thus each surface $I=\mathrm{constant}$ is an invariant $P_0$-level torus;
on the active set, where $\chi(\Kcal(I))\ne0$, it is also a level torus of
the localized pressure $P$.  Near
the central circle, the coordinate map has the form
\begin{equation}\label{eq:Phi-explicit}
  \Phi(\sigma,\beta,I)
  =\begin{pmatrix}
  \rho(\sigma,I)\cos\bigl(\beta+\gamma(\sigma,I)\bigr)\\
  \rho(\sigma,I)\sin\bigl(\beta+\gamma(\sigma,I)\bigr)\\
  \zeta(\sigma,I)
  \end{pmatrix},
\end{equation}
where
\begin{equation*}
  \rho(\sigma,I)=1+\sqrt{2I}\sin\sigma+O(I),\qquad
  \gamma(\sigma,I)=O(I),\qquad
  \zeta(\sigma,I)=\sqrt{2I}\cos\sigma+O(I).
\end{equation*}
The expansions show that the $P_0$-levels are nearly circular tori.  The
map extends continuously to $I=0$, where the two-torus collapses to the
central circle, and it admits a convergent expansion in powers of $\sqrt I$.
Figure~\ref{fig:Baldi-canonical-meridional-coordinates} illustrates the two
geometric stages of this construction.  
Revolving one such $P_0$-level loop about the symmetry axis produces the invariant torus
$I=\mathrm{constant}$.

\begin{figure}[t]
  \centering
  \includegraphics[width=\textwidth]{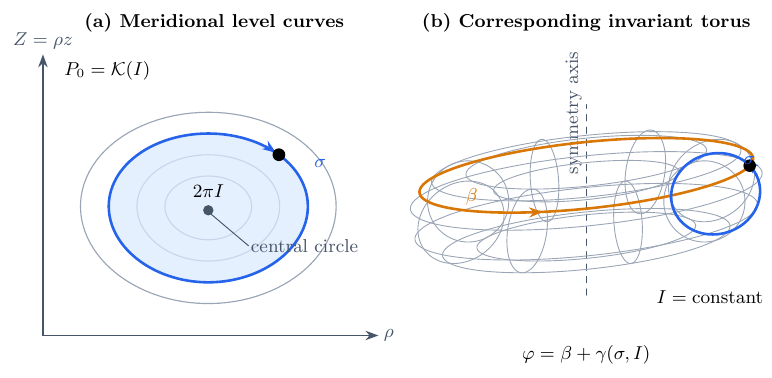}
  \caption{Schematic illustration of Baldi's canonical coordinates.
  Left: in the meridional plane $(\rho,Z)$, with $Z=\rho z$, a fixed
  action selects a closed $P_0$-level curve enclosing area $2\pi I$;
  $\sigma$ is the meridional angle.  Right: rotating this curve about the
  symmetry axis produces an invariant torus.  The action $I$ is fixed,
  $\beta$ is the straightened azimuthal angle, and the physical azimuth is
  $\varphi=\beta+\gamma(\sigma,I)$.  The black dot represents the same fluid
  particle in the two figures.  
  }
  \label{fig:Baldi-canonical-meridional-coordinates}
\end{figure}

Moreover,
\begin{equation*}
  \det D\Phi=1,
  \qquad
  \operatorname{vol}\{x:P_0(x)<\Kcal(I)\}=(2\pi)^2I.
\end{equation*}

Baldi also calculates the two angular frequencies
\begin{equation}\label{eq:Omega2}
  \Omega_1(I)=\chi(\Kcal(I))\Kcal'(I),
  \qquad
  \Omega_2(I)=\sqrt I\,\mathcal R(I)\Omega_1(I)
\end{equation}
with convergent expansions
\begin{equation}
  \Kcal(I)=I+\frac{1065}{1024}I^3+O(I^4),
  \qquad
  \mathcal R(I)=1+\frac74I+O(I^2).
  \label{eq:Baldi-expansion-summary}
\end{equation}
In particular, the rotation number
\begin{equation*}
  \frac{\Omega_2(I)}{\Omega_1(I)}
  =\sqrt I\,\mathcal R(I)
\end{equation*}
is increasing for sufficiently small $I>0$.  The functions
$\Phi$, $\Kcal$, and $\mathcal R$ do not depend on the
choice of cutoff.

\subsection{Main results}
\label{sec-main-results}
Baldi's result provides the volume-preserving integrable coordinates
in which we analyze the linearized Euler dynamics.
We restrict attention to a closed toroidal band strictly contained in the
active region of the steady velocity.  In particular, the band stays away
from the infinitely flat boundaries introduced by the compactly supported
cutoff.

Let $\chi_*$ and $\iota$ denote the cutoff parameters introduced in
Section~\ref{sec-cutoff}.  Below is the main result.

\begin{theorem}\label{thm:critical-torus-exponential-essential-instability}
Fix $\chi_*>0$.
For all sufficiently small $\iota$, let $U_\iota$ be the
localized Gavrilov equilibrium associated with
\eqref{eq:explicit-pressure-cutoff}, and let $G_\iota(t)$ be the
linearized Euler evolution operator.  There are a critical torus $I_{\rm c}$, an
integer $n\in\mathbb Z\setminus\{0\}$, a sufficiently large $\ell>0$, and a
constant $\sigma_\ell>0$ associated with the axisymmetric covector
$(n,0,\ell)$ such that, for every $s\ge0$,
\begin{equation}
  \liminf_{t\to\infty}\frac1t
  \log\left\|
    G_\iota(t)|_{H^s_{\rm axi}}
  \right\|_{{\rm ess},H^s}
  \ge\sigma_\ell>0.
  \label{eq:critical-torus-exact-exponential-essential-growth}
\end{equation}
\end{theorem}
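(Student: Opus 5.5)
The proof runs in three stages: a WKB/bicharacteristic reduction, a Floquet computation on a critical torus, and a lower bound for the essential norm. Since the paper beyond the statement is not visible, I use the standard Lifschitz--Hameiri--Vishik framework. The bicharacteristic-amplitude system is
\[
\dot x=U(x),\qquad \dot\xi=-(\nabla U)^{T}\xi,\qquad \dot b=-(\nabla U)b+2\frac{\xi\xi^{T}}{|\xi|^2}(\nabla U)b,\qquad b\cdot\xi=0 .
\]
By Vishik's theorem, the essential spectral radius of $G_\iota(t)$ on $H^s$ (and on the invariant axisymmetric subspace, whose symbols commute with rotation) satisfies
\[
r_{\rm ess}\bigl(G_\iota(t)\bigr)\ \ge\ \sup_{x_0,\xi_0}\ \limsup\ \text{growth of } |\xi|^{s}|b| .
\]
So it suffices to find one trajectory with exponential amplitude growth.

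\textbf{Pulling the system back by Baldi's chart.} Since $\det D\Phi=1$ and the flow is linear in $(\sigma,\beta)$ by \eqref{eq:Baldi-integrable-system}, the covector equation in canonical coordinates reads
\[
\dot\eta_\sigma=0,\qquad \dot\eta_\beta=0,\qquad \dot\eta_I=-\Omega_1'(I)\eta_\sigma-\Omega_2'(I)\eta_\beta .
\]
For the axisymmetric covector $(n,0,\ell)$, $\eta_I$ grows linearly unless $\Omega_1'(I)=0$. This is where the cutoff \eqref{eq:explicit-pressure-cutoff} enters. Because $\Omega_1=\chi(\Kcal)\Kcal'$ vanishes flatly at both ends of the active band and is positive inside, it has an interior critical point $I_{\rm c}$ with $\Omega_1'(I_{\rm c})=0$. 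I would use the explicit form of the cutoff, with parameters $\chi_*,\iota$, to show this critical point is nondegenerate and unique in the band for small $\iota$, and that $\Omega_1(I_{\rm c})\ne 0$. On the torus $I=I_{\rm c}$ the covector is then exactly periodic in $\sigma$, so the amplitude equation, written in a frame orthogonal to $\xi$, becomes a $2\times2$ periodic linear system $\dot a=M(\sigma)a$ with period $2\pi/\Omega_1(I_{\rm c})$. Choosing $\beta_0$ and $\sigma_0$ appropriately, the $\beta$-dependence decouples by axisymmetry. Taking the trace-free part, a scalar exponential factor cancels, leaving an $SL_2$ cocycle.

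\textbf{Main obstacle: hyperbolicity of the cocycle.} The core difficulty is proving that the monodromy $\mathcal M_\ell$ has $|\operatorname{tr}\mathcal M_\ell|>2$ for large $\ell$. My plan is an asymptotic expansion in $\varepsilon=n/\ell\to0$. For large normal component the covector is nearly normal to the torus, so $\xi\approx\ell\nabla I$. The leading-order matrix $M_0(\sigma)$ is then determined by the strain of $U$ projected on the tangent plane of the torus, which can be computed from \eqref{eq:Phi-explicit}, \eqref{eq:Omega2}, and the cutoff derivatives $\chi(\Kcal)$, $\chi'(\Kcal)\Kcal'$.

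At $I_{\rm c}$, $\Omega_1'=0$ forces $\chi'\Kcal'^2=-\chi\Kcal''$. Consequently the off-diagonal shear coefficient of $M_0$ has a nonzero mean, set by $\Omega_2'(I_{\rm c})$, whereas the diagonal part is oscillatory. I would compute $\operatorname{tr}\mathcal M$ via an averaging argument (or an explicit Magnus expansion) to show that the averaged matrix has real eigenvalues $\pm\sigma_\ell\ne0$. Concretely, I expect its determinant to be proportional to $-\Omega_2'(I_{\rm c})\cdot(\text{a curvature term})<0$. The quantities $\Kcal$, $\mathcal R$ are cutoff-independent and their expansions \eqref{eq:Baldi-expansion-summary} are available, so for small $\iota$ the sign can be fixed from the expansions. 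If averaging does not work, I would fall back on a direct perturbative trace computation to $O(\iota)$.

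\textbf{Lifting to the linearized Euler equation.} Given $\sigma_\ell>0$, I would build a localized WKB solution
\[
u(t)=e^{iS(t,x)/\delta}\bigl(b(t,x)+\delta b_1\bigr)+\text{corrector},
\]
where $S$ is an axisymmetric phase with $\nabla S(0)$ corresponding to $(n,0,\ell)$ near $I_{\rm c}$, and the amplitude is concentrated near the torus. Two error sources must be controlled. First, neighboring tori have $|\xi|$ growing like $|I-I_{\rm c}|t$; taking the localization width of order $t^{-1}$ (or $\delta^{1/2}$) keeps this under control on any time window. Second, the remainders are $O(\delta)$ uniformly on $[0,T]$. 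Sending $\delta\to0$ yields a sequence of approximate eigenfunctions converging weakly to zero, whose $H^s$ norms grow at least like $e^{(\sigma_\ell-\epsilon)t}$ modulo compact operators. Hence
\[
\|G_\iota(t)\|_{{\rm ess},H^s}\ge c\,e^{(\sigma_\ell-\epsilon)t},
\]
which gives \eqref{eq:critical-torus-exact-exponential-essential-growth} for every $s\ge0$, since on the critical torus $|\xi|$ stays bounded and the $H^s$ weight contributes no exponential factor.
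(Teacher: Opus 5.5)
Your identification of the critical torus and your lifting strategy are sound, but the hyperbolicity step, which is the heart of the theorem, relies on a mechanism that is not there. You propose to extract the unstable averaged matrix from the leading-order ($\ell\to\infty$) cocycle $M_0$, whose off-diagonal shear would have a nonzero mean set by $\Omega_2'(I_{\rm c})$. However, the shear part of $DU$ is $\mathsf E\,\mathsf S\,\mathsf E^{-1}=(\Omega_1'e_\sigma+\Omega_2'e_\beta)\otimes\nabla I$, which annihilates vectors tangent to the torus. At leading order $\xi\parallel\nabla I$ and the amplitude is tangent, so neither $\Omega_j'$ nor the cutoff derivative $\chi'$ enters $M_0$ at all.

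Worse, $M_0$ is exactly neutral on every active torus. Because $|U|^2$ and $L=\rho V_\varphi$ depend only on $I$, the normal-limit amplitude equation has the two solutions $U$ and $U_{\rm m}/|U_{\rm m}|^2$. Both are periodic in the cylindrical identification, so the monodromy is the identity. Indeed, $M_0$ is the exact cocycle of the ray with $k=0$: there $\xi=\ell\nabla I$, and $W\cdot\xi=0$ because the vorticity is tangent to the tori. Hence $b=\xi\times a$ is transported by $DX_t$, which preserves the $(\sigma,\beta)$-components of tangent vectors. In the frame $(U_{\rm m}/|U_{\rm m}|,\mathbf e_\varphi)$ the leading matrix is upper triangular and each entry is the time derivative of a periodic function, so its average vanishes. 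Consequently no averaging or Magnus expansion of $M_0$, and no expansion in $\iota$ alone, can produce eigenvalues $\pm\sigma_\ell\neq0$. Even on your own description (oscillatory diagonal plus a shear entry with nonzero mean) the average would be nilpotent, which gives only algebraic growth.

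The missing idea is that the instability is an $O(1/\ell)$ effect. Write $\xi^\perp$ as a graph over the tangent plane. The graph map together with the rank-one pressure term $2\xi(\xi\cdot DU a)/|\xi|^2$ produces a correction $\frac1\ell D$ with $D=(\mathsf d-\mathsf c)\mathsf r^{\mathsf T}+2\mathsf r\mathsf c^{\mathsf T}$, where $\mathsf r$ carries the mode $n$ through $\Pi_T\eta_k$. Conjugating by the explicit neutral fundamental matrix $Z_\infty$ built from the two periodic solutions gives $\dot z=(\frac1\ell K+O(\ell^{-2}))z$. The monodromy is then $\mathrm{Id}+\frac{T_0}{\ell}\overline K+O(\ell^{-2})$, with $\overline K$ the period average of $K=Z_\infty^{-1}DZ_\infty$.

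The critical torus then plays a second role that your plan does not see. Besides keeping $\xi$ periodic, $\Omega_1'(I_{\rm c})=0$ forces $\operatorname{tr}\overline K=n\Omega_1'(I_{\rm c})=0$, so hyperbolicity reduces to $\det\overline K(I_{\rm c})<0$. That determinant is expressed through $L_I/L$, $P_I$, $\partial_I|U|^2$ and $\langle|U_{\rm m}|^{-2}\rangle_{T_0}$. Your identity $\chi'\Kcal'^2=-\chi\Kcal''$ is exactly what eliminates $\chi'$ from $L_I/L$. The leading $1/\mathfrak c$ contributions then cancel, and Baldi's next-order coefficients give $\det\overline K(I_{\rm c})\approx-3n^2\chi_\iota(\Kcal(I_{\rm c}))^2/(2I_{\rm c})$. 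Consequently $\sigma_\ell=\sqrt{-\det\overline K(I_{\rm c})}/\ell+O(\ell^{-2})$ decays in $\ell$, so $\ell$ must be fixed large rather than sent to infinity.

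With this replacement your lift matches the paper's: weakly null WKB packets at fixed $t$, with frequency sent to infinity first and localization radius to zero second. Two conditions are needed. The packets must be axisymmetric, so that they bound the restricted essential norm. And you must use that $|\xi(t)|$ is periodic and nonvanishing, hence bounded \emph{below}, to absorb the $H^s$ weight.
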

Here $H^s_{\rm axi}$ is the closed axisymmetric
divergence-free subspace defined in Section~\ref{sec-lift}.
As an immediate consequence, for every $0<\delta<\sigma_\ell$ and all sufficiently large
observation times $T$, there is a real smooth axisymmetric divergence-free
field $v_{0,T}$ with $\|v_{0,T}\|_{H^s}=1$ such that
\begin{equation}
  \|G_\iota(T)v_{0,T}\|_{H^s}
  \ge e^{(\sigma_\ell-\delta)T}.
  \label{eq:critical-torus-observation-time-real-packet}
\end{equation}
Thus the testing packet may depend on the observation time.  Nevertheless, we can show a stronger fixed-data statement by the uniform boundedness principle
and Baire category.

Denote
\begin{equation*}
  H^s_{\rm div}(\R^3;\R^3)
  :=\{v\in H^s(\R^3;\R^3):\Div v=0\}, \quad s\geq0.
\end{equation*}

\begin{corollary}\label{cor:generic-fixed-Sobolev-datum}
Let $\sigma_\ell>0$ be the constant from
Theorem~\ref{thm:critical-torus-exponential-essential-instability}.  For every
$s\ge0$, there is a residual subset
$\mathcal R_s$ of the real Banach space
$H^s_{\rm div}(\R^3;\R^3)$ such that every nonzero
$v_0\in\mathcal R_s$ satisfies
\begin{equation}
  \limsup_{t\to\infty}
  \frac1t\log\bigl(1+\|G_\iota(t)v_0\|_{H^s}\bigr)
  \ge\sigma_\ell.
  \label{eq:fixed-datum-subsequential-growth}
\end{equation}
The same conclusion holds for a residual subset of the real axisymmetric
subspace. In particular, $v_0$ may be normalized in $H^s$, and
$\sup_{t\ge0}\|G_\iota(t)v_0\|_{H^s}=\infty$.
\end{corollary}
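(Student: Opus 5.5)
The plan is to derive the corollary from Theorem~\ref{thm:critical-torus-exponential-essential-instability} by the uniform boundedness principle, applied on a countable family of time sequences, and then to use a Baire category argument.

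\emph{Step 1: growth of the operator norm.} Fix $s\ge0$ and write $X=H^s_{\rm div}(\R^3;\R^3)$; the same argument applies verbatim to the closed subspace $X_{\rm axi}=H^s_{\rm axi}$. Every operator has norm at least its essential norm, and the norm on $X$ dominates the norm of the restriction to $X_{\rm axi}$. Hence \eqref{eq:critical-torus-exact-exponential-essential-growth} gives
\begin{equation*}
  \liminf_{t\to\infty}\frac1t\log\|G_\iota(t)\|_{X\to X}\ge\sigma_\ell .
\end{equation*}
We take for granted, as the paper does implicitly, that $G_\iota(t)$ is a bounded operator on $X$ for each $t$; it is in fact strongly continuous.

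\emph{Step 2: one sublevel set per rate.} Fix $0<\delta<\sigma_\ell$ and define
\begin{equation*}
  E_\delta=\Bigl\{v_0\in X:\ \sup_{t\ge0}e^{-(\sigma_\ell-\delta)t}\|G_\iota(t)v_0\|_{H^s}<\infty\Bigr\}.
\end{equation*}
Put $T_t=e^{-(\sigma_\ell-\delta)t}G_\iota(t)$. By Step~1, $\|T_t\|\ge e^{\delta t/2}$ for all large $t$, so $\sup_t\|T_t\|=\infty$. The Banach--Steinhaus theorem in its Baire form then says that $\{v_0:\sup_t\|T_t v_0\|<\infty\}$ is meager. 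To see this explicitly, consider the sets
\begin{equation*}
  F_N=\{v_0:\ \|T_t v_0\|\le N\ \text{for all } t\}.
\end{equation*}
Each $F_N$ is closed, since it is an intersection of closed sets (each $T_t$ is continuous). If some $F_N$ contained a ball, the usual translation argument would bound $\sup_t\|T_t\|$, which contradicts Step~1. So each $F_N$ is nowhere dense, and $E_\delta=\bigcup_N F_N$ is meager.

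\emph{Step 3: conclusion.} Set $\mathcal R_s=X\setminus\bigcup_{k}E_{\delta_k}$ with $\delta_k=\sigma_\ell/(k+1)$. As a countable intersection of residual sets, $\mathcal R_s$ is residual. Take $v_0\in\mathcal R_s$. For every $k$ we have
\begin{equation*}
  \sup_t e^{-(\sigma_\ell-\delta_k)t}\|G_\iota(t)v_0\|=\infty .
\end{equation*}
On any bounded time interval, $\|G_\iota(t)\|$ is bounded, by strong continuity and the uniform boundedness principle. The supremum therefore cannot come from bounded times, so there are $t_j\to\infty$ with $\|G_\iota(t_j)v_0\|\ge e^{(\sigma_\ell-\delta_k)t_j}$. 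This gives
\begin{equation*}
  \limsup_{t\to\infty}\frac1t\log\bigl(1+\|G_\iota(t)v_0\|\bigr)\ge\sigma_\ell-\delta_k,
\end{equation*}
and letting $k\to\infty$ yields \eqref{eq:fixed-datum-subsequential-growth}. The set $\mathcal R_s$ is invariant under scaling by nonzero constants, so normalization in $H^s$ is allowed. The unbounded orbit follows directly from the exponential lower bound.

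The main point needing care is local boundedness of $\|G_\iota(t)\|$ on compact time intervals. I would obtain it from the standard energy estimate for the linearized Euler equation around a smooth compactly supported $U_\iota$, namely
\begin{equation*}
  \frac{d}{dt}\|v\|_{H^s}\le C_s\|U_\iota\|_{C^{s+2}}\|v\|_{H^s},
\end{equation*}
with the pressure handled by Leray projection. This estimate gives $\|G_\iota(t)\|\le e^{Ct}$. With it, the remaining steps are routine functional analysis.
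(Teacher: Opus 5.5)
Your proof is correct and follows the paper's route: you turn Theorem~\ref{thm:critical-torus-exponential-essential-instability} into unbounded exponentially weighted operator norms, then run a Banach--Steinhaus/Baire argument over countably many rates below $\sigma_\ell$. The differences are only cosmetic. The paper takes the supremum over integer times $T\ge2$, so that finitely many early times are trivially harmless and your local bound $\|G_\iota(t)\|\le e^{Ct}$ is not needed. It also gets growth on the real space from the real packets \eqref{eq:critical-torus-observation-time-real-packet}; your ``norm $\ge$ essential norm'' step instead needs a one-line remark, because the paper's $H^s_{\rm axi}$ is complex and a real operator on a Hilbert space has the same norm as its complexification.
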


\subsection{Motivation of the main results}
\label{sec-motivation}
The short-wave approach and geometric-optics theory of
hydrodynamic instability was developed in the early 1990s independently by Friedlander--Vishik  \cite{FriedlanderVishik1991,FriedlanderVishik1992,VishikFriedlander1993} and Lifschitz--Hameiri \cite{LifschitzHameiri1991, LifschitzHameiri1993}.  Lifschitz and Hameiri derived local stability equations for rapidly oscillating
perturbations of a general fluid flow, while Friedlander and Vishik obtained geometric
lower bounds and instability criteria for 3D steady Euler
flows.  In particular, Friedlander and Vishik
\cite{FriedlanderVishik1992} obtained a sufficient instability criterion
for axisymmetric integrable steady flows in terms of streamline curvature,
geodesic torsion, and helicity.  Starting from a WKB ansatz
\begin{equation*}
  v^\varepsilon(x,t)
  \sim a(x,t)e^{iS(x,t)/\varepsilon},
  \qquad \varepsilon\downarrow0,
\end{equation*}
one is led along a particle trajectory to the bicharacteristic-amplitude
system for the position, phase covector, and velocity polarization.  Thus
the leading short-wave dynamics is a finite-dimensional linear
cocycle over the Lagrangian flow.  Lifschitz and Hameiri subsequently applied
this method to axisymmetric vortex rings with swirl and identified both exponential and
algebraic localized instabilities \cite{LifschitzHameiri1993}. In particular, they showed that unbounded transport solutions fall into exponential or algebraic growth regimes. 

Different from \cite{LifschitzHameiri1993}, the present paper addresses a specific type of steady velocities in $C_c^\infty(\R^3;\R^3)$ -- the localized Gavrilov family. We first identify an instability mechanism specific to compact localization. The cutoff makes the meridional frequency positive in the active shell and zero at its flat boundary; consequently it forces an interior critical torus. This torus is not a stagnation set: its particles continue to move, while the selected axisymmetric mode has zero projected twist. We compute the complete reduced $2\times2$ periodic generator on this torus and prove that a sufficiently large fixed normal covector makes its Floquet monodromy hyperbolic.  Thus the unstable ray is produced by the geometry of the explicit equilibrium rather than imposed as an abstract
hypothesis.

Second, we perform a quantitative realization of that ray in the linearized Euler evolution.  Combining the explicit cocycle
calculation with the classical transfer theory yields exponential essential-norm growth on every $H^s$, already after restriction to the real
axisymmetric divergence-free subspace.  The uniform boundedness principle and Baire category then convert this operator growth into exponential subsequential growth for generic fixed data.

More specifically, for the localized Gavrilov family considered here, our mechanism requires neither an unstable eigenvalue nor positive Lyapunov stretching of the underlying particle flow; instead, the instability mechanism lies in the Floquet hyperbolicity for a covector with sufficiently large normal component.
A companion paper develops the corresponding nonlinear instability analysis based on this Floquet mechanism.

\subsection{Relation to prior work}
The connection from the amplitude dynamics to the spectrum and
growth of the exact linearized Euler evolution was placed on an
operator theory by Vishik \cite{Vishik1996}.  In particular, the essential spectral
radius is governed by the maximal Lyapunov exponent of the
bicharacteristic-amplitude cocycle.  Shvydkoy and Vishik \cite{ShvydkoyVishik2004}
then related individual Lyapunov--Oseledets exponents of the three-dimensional
Euler cocycle to the spectrum of the evolution group.  Shvydkoy  \cite{Shvydkoy2006} developed a general pseudodifferential
framework for advective equations, proving a corresponding transfer from
the dynamical spectrum of the amplitude cocycle to the essential spectrum
of the PDE; later work gave a sharper spectral picture
for three-dimensional Euler \cite{Shvydkoy2010}.  At the nonlinear level,
Friedlander, Strauss, and Vishik \cite{FriedlanderStraussVishik1997} established an abstract mechanism by which
appropriate linear instability implies nonlinear instability for ideal
fluids.
Friedlander and Shnirelman \cite{FriedlanderShnirelman2001} subsequently surveyed these developments from the
broader viewpoint of instability of steady ideal-fluid flows, emphasizing
unstable essential dynamics, the distinction between slow and fast
mechanisms, and the dependence of stability on the ambient norm.  

The mechanism exploited here is complementary
to the classical sufficient criterion based on positive Lyapunov stretching
of the particle flow.  On the relevant critical torus the integrable base
trajectory has zero classical Lyapunov exponent, whereas the constrained
Euler amplitude cocycle above that trajectory has a positive Floquet
exponent.  It is this exponent, rather than exponential separation of
nearby fluid particles or an isolated unstable eigenvalue, that yields the
exponential essential-norm growth in the main results.
The possibility of positive fluid-amplitude growth with zero classical
particle-flow Lyapunov exponents already appears in the integrable-flow
instability theory of Friedlander and Vishik
\cite{FriedlanderVishik1992}, as discussed in
\cite[Section~2.1]{FriedlanderShnirelman2001}.  The contribution here is a quantitative Floquet-instability theorem for the localized Gavrilov family, obtained by identifying a cutoff-generated critical torus and proving hyperbolicity.

The organization of the paper is as follows.  Section \ref{sec-geometry} develops the
action--angle geometry of the localized equilibrium, and records the
linearized velocity equation in these coordinates. Section \ref{sec-cutoff} introduces
the angular modes and the localized frequency profile.  Section \ref{sec-Floquet-growth} derives
the trace-free amplitude cocycle, computes its first averaged correction,
and proves Floquet hyperbolicity on a cutoff-generated critical torus.
Section \ref{sec-lift} lifts this growth to the linearized Euler evolution and establishs
the fixed-datum consequence.  Appendix~A collects the formulas from Baldi's
construction used in the small-action calculations.

\section{Geometry of the action--angle chart and the linearized operator}
\label{sec-geometry}

Recall Baldi's coordinate map $\Phi$ has the form \eqref{eq:Phi-explicit}.
Let
\begin{equation*}
  e_a=\partial_a\Phi,
  \qquad
  g_{ab}=e_a\cdot e_b,
  \qquad
  (g^{ab})=(g_{ab})^{-1}.
\end{equation*}
Since $\det D\Phi=1$, we have
\begin{equation}
  \det g=(\det D\Phi)^2=1.
  \label{eq:det-g}
\end{equation}
Consequently, the Euclidean volume form pulls back to
\begin{equation*}
  \dd x=\dd\sigma\,\dd\beta\,\dd I,
\end{equation*}
and a vector field $v=v^a e_a$ satisfies
\begin{equation*}
  \Div_g v=\partial_a v^a.
\end{equation*}
The gradient and scalar Laplace--Beltrami operator are
\begin{equation*}
  (\grad_g q)^a=g^{ab}\partial_bq,
  \qquad
  \Delta_gq=\partial_a\bigl(g^{ab}\partial_bq\bigr).
\end{equation*}
It follows that
\begin{equation}
  g=\dd\rho^2+\dd\zeta^2+
  \rho^2(\dd\beta+\dd\gamma)^2
  \label{eq:metric-compact}
\end{equation}
with components
\begin{equation}  \label{eq:metric-components-1}
\begin{split}
  g_{\sigma\sigma}
    &=\rho_\sigma^2+\zeta_\sigma^2+
      \rho^2\gamma_\sigma^2,
  \quad g_{\sigma\beta}=\rho^2\gamma_\sigma,
  \quad g_{\beta\beta}=\rho^2,\\
  g_{\beta I}&=\rho^2\gamma_I, \quad
  g_{\sigma I}
    =\rho_\sigma\rho_I+\zeta_\sigma\zeta_I+
      \rho^2\gamma_\sigma\gamma_I,
  \quad g_{II}=\rho_I^2+\zeta_I^2+\rho^2\gamma_I^2.
\end{split}
\end{equation}
The Christoffel symbols are
\begin{equation*}
  \Gamma^a_{bc}
  =\frac12g^{ad}
  \left(
  \partial_bg_{cd}+\partial_cg_{bd}-\partial_dg_{bc}
  \right).
\end{equation*}

\subsection{The first small-action expansion of the geometry}

We now carry out the first geometric expansion of the coordinate map. Denote $r:=\sqrt{2I}$.

\begin{proposition}\label{prop:first-geometric-expansion}
As $I\downarrow0$, Baldi's coordinate functions in
\eqref{eq:Phi-explicit} satisfy
  \begin{align}
  \rho(\sigma,I)
    &=1+r\sin\sigma+r^2 A(\sigma)+O(r^3),
  \label{eq:rho-first-expansion}\\
  \zeta(\sigma,I)
    &=r\cos\sigma+r^2 B(\sigma)+O(r^3),
  \label{eq:zeta-first-expansion}\\
  \gamma(\sigma,I)
    &=r^2 C(\sigma)+O(r^3)
  \label{eq:gamma-first-expansion}
\end{align}
uniformly in $\sigma$, with
\begin{equation}
  A=\frac{5\cos\sigma-2\cos^2\sigma-3}{4},
  \qquad
  B=\frac{\sin\sigma(2\cos\sigma-5)}{4},
  \qquad
  C=\sqrt2\,(\cos\sigma-1).
  \label{eq:ABC-first-expansion}
\end{equation}
Consequently, we have in the coordinate ordering $(\sigma,\beta,I)$, 
\begin{equation}
  (g_{ab})=
  \begin{pmatrix}
    r^2+r^3\sin\sigma 
      &-\sqrt2\,r^2\sin\sigma
      &\dfrac{5-8\cos\sigma}{4}r \\[2mm] 
    -\sqrt2\,r^2\sin\sigma 
      &1+2r\sin\sigma
        +\dfrac{5\cos\sigma-4\cos^2\sigma-1}{2}r^2 
      &2\sqrt2(\cos\sigma-1)\\[2mm] 
    \dfrac{5-8\cos\sigma}{4}r 
      &2\sqrt2(\cos\sigma-1)
      &r^{-2}-3r^{-1}\sin\sigma 
  \end{pmatrix}
  + (g_{ab})_R,
  \label{eq:metric-first-expansion}
\end{equation}
and
\begin{equation}
  (g^{ab})=
  \begin{pmatrix}
    r^{-2}-r^{-1}\sin\sigma 
      &\sqrt2\,\sin\sigma 
      &-\dfrac{5-8\cos\sigma}{4}r\\[2mm]
    \sqrt2\,\sin\sigma
      &1-2r\sin\sigma
      &2\sqrt2(1-\cos\sigma)r^2\\[2mm] 
    -\dfrac{5-8\cos\sigma}{4}r
      &2\sqrt2(1-\cos\sigma)r^2
      &r^2+3r^3\sin\sigma
  \end{pmatrix}
  + (g^{ab})_R
  \label{eq:inverse-metric-first-expansion}
\end{equation}
with the lower order terms 
\begin{equation*}
  (g_{ab})_R=
  \begin{pmatrix}
    O(r^4) &O(r^3) &O(r^2)\\
    O(r^3) &O(r^3) &O(r)\\
   O(r^2) &O(r) &O(1)
  \end{pmatrix}, \quad
   (g^{ab})_R=
  \begin{pmatrix}
   O(1)
      &O(r)
      &O(r^2)\\
  O(r)
      &O(r^2)
      &O(r^3)\\
   O(r^2)
      &O(r^3)
      &O(r^4)
  \end{pmatrix}.
\end{equation*}

The Christoffel symbols 
\begin{equation*}
  \Gamma_\sigma:=\bigl(\Gamma^a_{\sigma c}\bigr)_{a,c},
  \qquad
  \Gamma_\beta:=\bigl(\Gamma^a_{\beta c}\bigr)_{a,c}
\end{equation*}
have the first nontrivial expansions
\begin{equation}
  \Gamma_\sigma=
  \begin{pmatrix}
    \dfrac{5-6\cos\sigma}{4}r
      &\sqrt2\,r\sin\sigma\cos\sigma
      &r^{-2}+\dfrac{\sin\sigma}{2r}\\[2mm]
    \sqrt2(\cos\sigma-2)r^2
      &r\cos\sigma
      &-\sqrt2\,\sin\sigma\\[2mm]
    -r^2
      &\sqrt2\,r^3\sin^2\sigma
      &\dfrac{2\cos\sigma-5}{4}r
  \end{pmatrix}
  +  \Gamma_{\sigma,R},
  \label{eq:Gamma-sigma-first-expansion}
\end{equation}
and
\begin{equation}
  \Gamma_\beta=
  \begin{pmatrix}
    \sqrt2\,r\sin\sigma\cos\sigma 
      &-\dfrac{\cos\sigma}{r}
        +\dfrac{\sin\sigma(5-6\cos\sigma)}2
      &\dfrac{2\sqrt2\,\cos\sigma(1-\cos\sigma)}{r}\\[2mm]
    r\cos\sigma
      &\sqrt2\,r\sin\sigma(\cos\sigma-2)
      &\dfrac{\sin\sigma}{r}-\dfrac52(1-\cos\sigma)\\[2mm]
    \sqrt2\,r^3\sin^2\sigma
      &-r\sin\sigma
      &2\sqrt2\,r\sin\sigma(1-\cos\sigma)
  \end{pmatrix}
  + \Gamma_{\beta,R}
  \label{eq:Gamma-beta-first-expansion}
\end{equation}
with 
\begin{equation*}
 \Gamma_{\sigma,R}=
  \begin{pmatrix}
    O(r^2)
      &O(r^2)
      &O(1)\\
  O(r^3)
      &O(r^2)
      &O(r)\\
    O(r^3)
      &O(r^4)
      &O(r^2)
  \end{pmatrix}, \quad
   \Gamma_{\beta,R}=
  \begin{pmatrix}
   O(r^2)
      &O(r)
      &O(1)\\
  O(r^2)
      &O(r^2)
      &O(r)\\
  O(r^4)
      &O(r^2)
      &O(r^2)
  \end{pmatrix}.
\end{equation*}
\end{proposition}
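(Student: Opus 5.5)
The proof has three stages: derive the second-order coefficients $A,B,C$ from Baldi's construction, then obtain the metric from \eqref{eq:metric-compact}, then obtain the inverse metric and Christoffel symbols by order-tracked algebra.

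\emph{Stage 1: the coefficients $A,B,C$.} I start from Baldi's construction as summarized in Appendix~A. In the meridional plane $(\rho,Z)$ with $Z=\rho z$, the action $I$ labels the closed $P_0$-level curve enclosing area $2\pi I$, and $\sigma$ is the canonical angle on that curve.

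I write $P_0$ in local coordinates $\rho=1+x$, $Z=y$. By Gavrilov's normalization its Taylor expansion is $P_0=\tfrac12(x^2+y^2)+c_{30}x^3+c_{21}x^2y+\dots$, with cubic coefficients fixed by the analytic ODE behind \eqref{eq:Gavrilov-first-integrals}. These cubic coefficients must be consistent with $\Kcal(I)=I+O(I^3)$ from \eqref{eq:Baldi-expansion-summary}.

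I then run one step of Birkhoff/Lindstedt normal form. I seek $(x,y)=r(\sin\sigma,\cos\sigma)+r^2(a(\sigma),b(\sigma))+O(r^3)$ subject to two conditions:
\begin{itemize}
\item the map $(\sigma,I)\mapsto(x,y)$ is area preserving, i.e.\ its Jacobian in $(\sigma,I)$ equals $1$ at order $r$;
\item $P_0$ is independent of $\sigma$ at order $r^3$.
\end{itemize}
These give a linear first-order ODE system in $\sigma$ for $(a,b)$. Its periodic solution, with the mean fixed by the normalization of the angle, gives $A$ and $\zeta$'s coefficient $B$. The factor $\rho^{-1}$ in $z=Z/\rho$ contributes an extra $-r^2\sin\sigma\cos\sigma$ to $B$.

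The function $C$ comes from the azimuthal straightening. We have $\dot\varphi=U\cdot e_\varphi/\rho=b(a)/\rho^2$ along trajectories, and $\beta=\varphi-\gamma$ must advance linearly. Hence $\Omega_1\partial_\sigma\gamma=b/\rho^2-\Omega_2$. Expanding this to leading order, using $\Omega_2/\Omega_1=\sqrt I\,\mathcal R$ from \eqref{eq:Omega2}, and integrating in $\sigma$ with the normalization $\gamma(0,I)=0$ gives $C=\sqrt2(\cos\sigma-1)$.

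\emph{Stage 2: the metric $g_{ab}$.} I substitute \eqref{eq:rho-first-expansion}--\eqref{eq:gamma-first-expansion} into \eqref{eq:metric-components-1}. The key bookkeeping fact is that $\partial_I=r^{-1}\partial_r$, so every $I$-derivative loses one power of $r$. This is why $g_{II}\sim r^{-2}$ and $g_{\sigma I}\sim r$. I expand each product, keeping exactly the orders displayed, and put everything else into $(g_{ab})_R$.

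As a check, I verify $\det g=1$ from \eqref{eq:det-g} order by order. This is a strong consistency test on $A,B,C$.

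\emph{Stage 3: inverse metric and Christoffel symbols.} Since $\det g=1$, the inverse is the adjugate, $g^{ab}=\operatorname{cof}(g)_{ab}$. The anisotropic scaling is handled by conjugating with $D=\mathrm{diag}(r,1,r^{-1})$: the matrix $DgD$ equals $I_3$ plus off-diagonal $O(r)$ terms, and it can be inverted by a Neumann series. This yields \eqref{eq:inverse-metric-first-expansion} together with the stated remainder pattern.

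For the Christoffel symbols, I compute $\Gamma^a_{bc}=\tfrac12 g^{ad}(\partial_bg_{cd}+\partial_cg_{bd}-\partial_dg_{bc})$ directly for $b=\sigma$ and $b=\beta$. Note that $\partial_\beta g=0$, so only the $\sigma$- and $I$-derivatives enter. I track orders using the same $D$-scaling; in scaled form each $\Gamma$ matrix becomes $D^{-1}\Gamma D$ with bounded entries, which shows that the stated leading terms plus remainders are consistent.

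\emph{Main obstacle.} I expect Stage~1 to be the hardest, specifically extracting the precise cubic Taylor coefficients of $P_0$ and the swirl expansion of $b(a)$ from Baldi's appendix and confirming the constants $5/4$, $-2/4$ and $\sqrt2$. Mistakes there propagate to every later formula. My first step will be to fix these coefficients by requiring both $\Kcal=I+O(I^3)$ (no $I^2$ term) and $\mathcal R=1+\tfrac74I$, and only then to solve the ODE for $(A,B)$.
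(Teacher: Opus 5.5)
Your Stage~1 takes a genuinely different route from the paper. The paper reads $\rho,\zeta$ off Baldi's polar level graph $\sqrt{2\upsilon}=\mu w(\vartheta,\mu)$ with $W_1=-P_3/8$, and then inverts his explicit straightening map $f_{\mathfrak c}$. Your normal-form conditions are a legitimate substitute. Write $u=a\sin\sigma+b\cos\sigma$ and $w=a\cos\sigma-b\sin\sigma$ for the radial and angular parts of the $r^2$ correction to $(x,Z)=(\rho-1,\rho\zeta)$. Your two conditions then read $u=-p_3(\sin\sigma,\cos\sigma)$ and $w'=-3u$. The gap lies in the two inputs that actually produce $A$ and $B$.

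\textbf{Determining the cubic.} Your plan to fix the cubic $p_3$ from $\Kcal=I+O(I^3)$ and $\mathcal R=1+\frac74I$ does not work. These are single scalar identities, and their coefficients also involve the quartic Taylor coefficients of $P_0$ and the second coefficient of $\mathcal H$. They are consistency checks, not determining equations. The cubic has to come from the construction itself. Since $|U_0|^2=\rho^{-2}\bigl(|\nabla_{(\rho,z)}P_0|^2+b^2\bigr)$, the relation $|U_0|^2=3P_0$ is the eikonal equation $|\nabla_{(\rho,z)}P_0|^2=3\rho^2P_0-b(P_0)^2$. Combined with $b^2=\mathcal H(\mathfrak c)/16=P_0+O(P_0^2)$, it forces $P_0=\frac12(x^2+z^2)+\frac34x(x^2+z^2)+O((x^2+z^2)^2)$. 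In the variable $Z=\rho z$ this gives $p_3=\frac34x^3-\frac14xZ^2$, which is exactly the content of Baldi's $W_1$.

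\textbf{Normalizing the angle.} Area preservation and $\sigma$-independence of $P_0$ determine $(a,b)$ only modulo $h_1(\cos\sigma,-\sin\sigma)$, that is, modulo the shift $\sigma\mapsto\sigma+h(I)$. The stated $A,B$ rely on Baldi's pointwise convention $\sigma=0\Leftrightarrow\vartheta=0$, equivalently $A(0)=0$. This gives $w=\frac14(5-6\cos\sigma+\cos3\sigma)$. A mean-zero choice of $w$ would instead delete the $\frac54\cos\sigma$ term from $A$. Your $\det g=1$ check cannot detect this. The shift is area preserving, and at first order $\det g=1$ is just your Jacobian condition again. The check is also independent of $\gamma$, so it does not test $C$ at all.

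\textbf{Smaller points.} The operator $\partial_I=r^{-1}\partial_r$ costs two powers of $r$, not one; this is why $\rho_I\sim r^{-1}$ and $g_{II}\sim r^{-2}$. The scaling that normalizes $g$ is $\mathrm{diag}(r^{-1},1,r)$; your $\mathrm{diag}(r,1,r^{-1})$ normalizes $g^{-1}$ instead.

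In Stage~3, the Koszul formula applied to the displayed truncation of $g_{ab}$ does not fix every entry. In $\Gamma^\sigma_{\beta I}$ and $\Gamma^I_{\sigma\beta}$, the leading parts of $\partial_Ig_{\sigma\beta}$ and $\partial_\sigma g_{\beta I}$ are both $-2\sqrt2\sin\sigma$ and cancel, so the coefficient you need sits in $(g_{ab})_R$. You must use the exact identity $\partial_I(\rho^2\gamma_\sigma)-\partial_\sigma(\rho^2\gamma_I)=2\rho(\rho_I\gamma_\sigma-\rho_\sigma\gamma_I)=-2\sqrt2\,r(1-\cos\sigma)^2+O(r^2)$. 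Alternatively, follow the paper: differentiate the frame in cylindrical components and convert back with the explicit inverse Jacobian, where such cancellations are built in. The paper likewise obtains $g^{ab}$ by block inversion using $\det\mathfrak h=\rho^{-2}$.

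Your derivation of $C$ is the paper's formula $\partial_\sigma\gamma=(Q-Q_0)/\mathfrak c_I$ in different notation, and it is fine.
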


\begin{proof}
The formulas from Baldi's construction recorded in Appendix~\ref{app:Baldi-formulas} will be used here.  We first combine the level-set expansion with the angular straightening map to obtain the expansions of $\rho$,
$\zeta$, and $\gamma$.  The first nonradial polynomial and the first
implicit-function coefficient are
\begin{equation}
  P_3(\vartheta)=2\sin\vartheta-\sin(3\vartheta),
  \qquad
  W_1(\vartheta)=-\frac18P_3(\vartheta).
  \label{eq:Baldi-P3-W1}
\end{equation}
Let $\mathfrak c:=4\Kcal(I)$ be Baldi's unscaled level parameter 
and take $\mu:=\sqrt{\mathfrak c}$.  Since $4\Kcal(I)=4I+O(I^3)$ and
$r=\sqrt{2I}$,
\begin{equation}
  \mu=\sqrt2\,r+O(r^5).
  \label{eq:Baldi-mu-r-relation}
\end{equation}
Baldi's implicit level-set expansion has
\begin{equation*}
  \upsilon:=\gamma^{\rm mer}_{\mathfrak c}(\vartheta),
  \qquad
  w(\vartheta,\mu)
  =\frac1{\sqrt2}+W_1(\vartheta)\mu+O(\mu^2),
  \qquad
  \sqrt{2\upsilon}=\mu w(\vartheta,\mu).
\end{equation*}
Applying \eqref{eq:Baldi-P3-W1} and
\eqref{eq:Baldi-mu-r-relation}, we infer
\begin{equation}
  \sqrt{2\upsilon}
  =\frac{\mu}{\sqrt2}-\frac18P_3(\vartheta)\mu^2+O(\mu^3)
  =r+r^2\left(\frac{\sin\vartheta}{4}-\sin^3\vartheta\right)+O(r^3).
  \label{eq:Baldi-radial-before-angle}
\end{equation}
We next derive the inverse angular reparametrization.  Baldi's meridional
level-set function $\gamma^{\rm mer}_{\mathfrak c}(\vartheta)$ satisfies
\begin{equation*}
  \partial_{\mathfrak c}\gamma^{\rm mer}_{\mathfrak c}(\vartheta)
  =\frac14-\frac{3\mu}{16\sqrt2}P_3(\vartheta)+O(\mu^2).
\end{equation*}
With
$F_{\mathfrak c}(\vartheta)
  =\int_0^\vartheta
    \partial_{\mathfrak c}
      \gamma^{\rm mer}_{\mathfrak c}(u)\,\dd u$
and
$f_{\mathfrak c}(\vartheta)
  =2\pi F_{\mathfrak c}(\vartheta)/F_{\mathfrak c}(2\pi)$, it follows from the zero average of $P_3$ that
\begin{align*}
  F_{\mathfrak c}(2\pi)&=\frac\pi2+O(\mu^2),
\\
  f_{\mathfrak c}(\vartheta)
    &=\vartheta-
      \frac{3\mu}{4\sqrt2}
      \int_0^\vartheta P_3(u)\,\dd u+O(\mu^2)
  \\
    &=\vartheta-
      \frac{\mu}{4\sqrt2}
      \bigl(5-6\cos\vartheta+\cos(3\vartheta)\bigr)
      +O(\mu^2),
\end{align*}
since
\begin{equation*}
  \int_0^\vartheta P_3(u)\,\dd u
  =\frac53-2\cos\vartheta+\frac13\cos(3\vartheta).
\end{equation*}
The inverse map $g_{\mathfrak c}=f_{\mathfrak c}^{-1}$ consequently satisfies
\begin{equation*}
  g_{\mathfrak c}(\sigma)
  =\sigma+
    \frac{\mu}{4\sqrt2}
    \bigl(5-6\cos\sigma+\cos(3\sigma)\bigr)
    +O(\mu^2).
\end{equation*}
Evaluating \eqref{eq:Baldi-radial-before-angle} at
$\vartheta=g_{\mathfrak c}(\sigma)$ and applying
\eqref{eq:Baldi-mu-r-relation} we get
  \begin{align*}
  \sqrt{2\upsilon}
    &=r+r^2\left(\frac{\sin\sigma}{4}-\sin^3\sigma\right)+O(r^3),
\\
  \vartheta
    &=\sigma+\frac r4\bigl(5-6\cos\sigma+\cos(3\sigma)\bigr)+O(r^2).
\end{align*}
Substitution into Baldi's formulas
\begin{equation}
  \rho=1+\sqrt{2\upsilon}\sin\vartheta,
  \qquad
  \zeta=\frac{\sqrt{2\upsilon}\cos\vartheta}{\rho}
  \label{eq:Baldi-rho-zeta-exact}
\end{equation}
implies \eqref{eq:rho-first-expansion} and
\eqref{eq:zeta-first-expansion}.  For the azimuthal correction, 
Baldi's formula together with 
\eqref{eq:Baldi-rho-zeta-exact} yields
\begin{equation*}
  Q(\sigma,I)
  =\frac{\sqrt{\mathcal H(\mathfrak c)}}{\rho(\sigma,I)^2}.
\end{equation*}
His expansion of $\mathcal H(\mathfrak c)$ together with
\eqref{eq:rho-first-expansion} gives
\begin{equation*}
  \sqrt{\mathcal H(\mathfrak c)}
    =2\sqrt2\,r+O(r^3),
  \qquad
  \rho(\sigma,I)^{-2}=1-2r\sin\sigma+O(r^2).
\end{equation*}
It then follows
  \begin{equation*}
  Q(\sigma,I)
    =2\sqrt2\,r-4\sqrt2\,r^2\sin\sigma+O(r^3),
  \qquad
  Q_0(I)=2\sqrt2\,r+O(r^3).
\end{equation*}
The term $\gamma(\sigma,I)$ is defined through
\(\partial_\sigma\gamma(\sigma,I)
 =\frac{Q(\sigma,I)-Q_0(I)}{\mathfrak c_I(I)}\) in Baldi's work.
Normalizing it to vanish at $\sigma=0$, we
therefore have
\begin{equation*}
  \begin{aligned}
  \gamma(\sigma,I)
    &=\frac{1}{\mathfrak c_I}
      \int_0^\sigma\bigl(Q(\tilde \sigma,I)-Q_0(I)\bigr)\,\dd \tilde \sigma\\
    &=\frac{1}{4}\int_0^\sigma
      \bigl(Q(\tilde \sigma,I)-Q_0(I)\bigr)\,\dd \tilde \sigma+O(r^3)\\
    &=\sqrt2\,r^2(\cos\sigma-1)+O(r^3)
  \end{aligned}
\end{equation*}
which proves \eqref{eq:ABC-first-expansion}.

For later simplifications, it follows from direct computation that
\begin{equation}\label{eq:AB-identities-1}
\begin{split}
 & (\sin\sigma)A+(\cos\sigma)B=-\frac34\sin\sigma, \quad
  (\cos\sigma)A-(\sin\sigma)B=\frac54(1-\cos\sigma),\\
&  (\cos\sigma)A'-(\sin\sigma)B'=\frac12\sin\sigma, \\
&  2\bigl((\cos\sigma)A-(\sin\sigma)B\bigr) 
    +(\sin\sigma)A'+(\cos\sigma)B'
    =\frac{5-8\cos\sigma}{4}.
    \end{split}
\end{equation}
In particular,
\begin{equation*}
  (\cos\sigma)A'-(\sin\sigma)B'
  +2\bigl((\sin\sigma)A+(\cos\sigma)B\bigr)+\sin\sigma=0,
\end{equation*}
which is a consequence of $\det D\Phi=1$. The equation \eqref{eq:metric-first-expansion} follows by
inserting \eqref{eq:rho-first-expansion}--\eqref{eq:gamma-first-expansion}
into \eqref{eq:metric-components-1}.

Viewing $\mathfrak h:=\dd\rho^2+\dd\zeta^2$ as a two-dimensional metric in $(\sigma,I)$ and denoting
$(\mathfrak h^{ij})=(\mathfrak h_{ij})^{-1}$, 
\eqref{eq:metric-compact} and $\det g=1$ imply
\begin{equation*}
  \det\mathfrak h=\rho^{-2}.
\end{equation*}
For $i,j\in\{\sigma,I\}$, we obtain the identities from block inversion
\begin{equation*}
  g^{ij}=\mathfrak h^{ij},
  \qquad
  g^{i\beta}=-\mathfrak h^{ij}\gamma_j,
  \qquad
  g^{\beta\beta}=\rho^{-2}+\mathfrak h^{ij}\gamma_i\gamma_j.
\end{equation*}
Equations in \eqref{eq:AB-identities-1} then imply \eqref{eq:inverse-metric-first-expansion}.

Next we compute the Christoffel symbols.  Recall
$\varphi=\beta+\gamma(\sigma,I)$ and 
$(\mathbf e_\rho,\mathbf e_\varphi,\mathbf e_z)$ is the standard orthonormal
cylindrical frame.  The coordinate frame is
\begin{equation*}
  e_\sigma
    =\rho_\sigma\mathbf e_\rho
      +\rho\gamma_\sigma\mathbf e_\varphi
      +\zeta_\sigma\mathbf e_z, \quad
  e_\beta=\rho\mathbf e_\varphi, \quad
 e_I=\rho_I\mathbf e_\rho
      +\rho\gamma_I\mathbf e_\varphi
      +\zeta_I\mathbf e_z.
\end{equation*}
The volume identity $\det D\Phi=1$ becomes
\begin{equation}
  J:=\rho_\sigma\zeta_I-\rho_I\zeta_\sigma=\rho^{-1}.
  \label{eq:meridional-Jacobian-exact}
\end{equation}
Consequently, 
$V=V_\rho\mathbf e_\rho+V_\varphi\mathbf e_\varphi+V_z\mathbf e_z=V^ae_a$
has contravariant coordinate components 
\begin{equation} \label{eq:cylindrical-to-coordinate-components-1}
\begin{split}
  V^\sigma
    &=\rho\bigl(\zeta_I V_\rho-\rho_I V_z\bigr), \quad
  V^I=\rho\bigl(-\zeta_\sigma V_\rho+\rho_\sigma V_z\bigr),\\
  V^\beta
    &=\frac{V_\varphi}{\rho}
      -\gamma_\sigma V^\sigma-\gamma_I V^I.
 \end{split}
\end{equation}

Using $\partial_b\mathbf e_\rho=(\partial_b\varphi)\mathbf e_\varphi$, 
$\partial_b\mathbf e_\varphi=-(\partial_b\varphi)\mathbf e_\rho$ and $U^I=0$, the frame
derivatives  are
\begin{equation}  \label{eq:frame-derivative-sigma-sigma}
 \begin{split}
  \partial_\sigma e_\sigma
    &=(\rho_{\sigma\sigma}-\rho\gamma_\sigma^2)\mathbf e_\rho
      +(2\rho_\sigma\gamma_\sigma
        +\rho\gamma_{\sigma\sigma})\mathbf e_\varphi
      +\zeta_{\sigma\sigma}\mathbf e_z,\\
  \partial_\sigma e_\beta
    &=-\rho\gamma_\sigma\mathbf e_\rho
      +\rho_\sigma\mathbf e_\varphi,\\
  \partial_\sigma e_I
    &=(\rho_{\sigma I}-\rho\gamma_\sigma\gamma_I)\mathbf e_\rho
      +(\rho_I\gamma_\sigma+\rho_\sigma\gamma_I
        +\rho\gamma_{\sigma I})\mathbf e_\varphi
      +\zeta_{\sigma I}\mathbf e_z,\\
  \partial_\beta e_\sigma
    &=-\rho\gamma_\sigma\mathbf e_\rho
      +\rho_\sigma\mathbf e_\varphi,\\
  \partial_\beta e_\beta&=-\rho\mathbf e_\rho,\\
  \partial_\beta e_I
    &=-\rho\gamma_I\mathbf e_\rho+\rho_I\mathbf e_\varphi.
 \end{split}
\end{equation}
The coordinate expansions needed in these identities are
\begin{align*}
  \rho_\sigma&=r\cos\sigma+r^2A'+O(r^3),
  &\zeta_\sigma&=-r\sin\sigma+r^2B'+O(r^3),
  &\gamma_\sigma&=r^2C'+O(r^3),
\\
  \rho_I&=\frac{\sin\sigma}{r}+2A+O(r),
  &\zeta_I&=\frac{\cos\sigma}{r}+2B+O(r),
  &\gamma_I&=2C+O(r).
\end{align*}
Since
$\partial_b e_c=\Gamma^a_{bc}e_a$ in the Euclidean ambient space,
applying \eqref{eq:cylindrical-to-coordinate-components-1} to
\eqref{eq:frame-derivative-sigma-sigma} and then using
\eqref{eq:AB-identities-1} shows \eqref{eq:Gamma-sigma-first-expansion} and
\eqref{eq:Gamma-beta-first-expansion}.  For example, this calculation yields
  \begin{equation*}
  \Gamma^\sigma_{\sigma I}
    =r^{-2}+\frac{\sin\sigma}{2r}+O(1),
  \quad
  \Gamma^\sigma_{\beta\beta}
    =-\frac{\cos\sigma}{r}
      +\frac{\sin\sigma(5-6\cos\sigma)}2+O(r),
  \quad
  \Gamma^\beta_{\sigma I}=-\sqrt2\,\sin\sigma+O(r).
\end{equation*}
We note that the trace
\begin{equation*}
  \Gamma^\sigma_{\sigma\sigma}
  +\Gamma^\beta_{\sigma\beta}
  +\Gamma^I_{\sigma I}=0,
  \qquad
  \Gamma^\sigma_{\beta\sigma}
  +\Gamma^\beta_{\beta\beta}
  +\Gamma^I_{\beta I}=0
\end{equation*}
hold at the displayed orders, as required by $\Gamma^a_{ab}=\partial_b\log\sqrt{\det g}=0$.
\end{proof}

\subsection{Interior active action--angle band}

The action--angle chart degenerates at the central circle $I=0$, while the
localized equilibrium becomes inactive where the cutoff vanishes.  The
following corollary isolates a compact toroidal band away from both effects,
providing the uniformly analytic, volume-preserving coordinate domain used in
the linearized and geometric-optics arguments below.

\begin{corollary}
\label{cor:interior-action-angle-band}
Let $U$ be the localized Gavrilov equilibrium and let $\Phi$ be Baldi's
action--angle map.  Define the open active action set
\begin{equation*}
  G_\chi
  :=\bigl\{I\in(0,I^*):\chi(\Kcal(I))\ne0\bigr\}.
\end{equation*}
For every closed interval
\begin{equation}
  [I_-,I_+]\Subset G_\chi,
  \qquad 0<I_-<I_+<I^*,
  \label{eq:interior-action-interval}
\end{equation}
the set
\begin{equation*}
  \mathcal A:=\Phi\bigl(\T^2\times[I_-,I_+]\bigr)
\end{equation*}
is a closed toroidal band, and the restriction of $\Phi$ is an analytic
volume-preserving diffeomorphism of manifolds with boundary
\begin{equation}
  \Phi:\T^2\times[I_-,I_+]\longrightarrow \mathcal A\subset\R^3
  \label{eq:Phi}
\end{equation}
such that $\det D\Phi=1$
and the pullback of the equilibrium velocity has contravariant components
\begin{equation}
  U^a(y)=\bigl(\Omega_1(I),\Omega_2(I),0\bigr)^a,
  \qquad y=(\sigma,\beta,I).
  \label{eq:U-action-angle}
\end{equation}
In particular, the cutoff appearing in the Gavrilov construction is nonzero
throughout the selected band.
\end{corollary}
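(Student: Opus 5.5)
The corollary is essentially a restriction statement, so the plan is to transport the global properties of Baldi's chart (Section~\ref{sec-Baldi}) to the compact sub-band. First, since $\Phi:\T^2\times(0,I^*)\to S^*$ is an analytic diffeomorphism with $\det D\Phi=1$, and $[I_-,I_+]\subset(0,I^*)$ is compact, the restriction of $\Phi$ to the compact manifold with boundary $\T^2\times[I_-,I_+]$ is still an injective analytic immersion. Because its domain is compact, it is a homeomorphism onto its image $\mathcal A$, which is therefore compact and hence closed in $\R^3$. It is an embedding, i.e.\ a diffeomorphism of manifolds with boundary, and $\det D\Phi=1$ is inherited pointwise. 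The boundary of $\mathcal A$ consists of the two invariant tori $\Phi(\T^2\times\{I_\pm\})$, which by \eqref{eq:Baldi-pressure-action} are the level sets $\{P_0=\Kcal(I_\pm)\}\cap S^*$. This identifies $\mathcal A$ as the closed region $\{\Kcal(I_-)\le P_0\le\Kcal(I_+)\}\cap S^*$ between two nested tori, i.e.\ a closed toroidal band. To justify the nesting I will use that $\Kcal$ is strictly increasing, since $\Kcal'(I)=1+O(I^2)>0$ on the small-action range by \eqref{eq:Baldi-expansion-summary}.

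Second, I will verify \eqref{eq:U-action-angle}. Baldi's conjugacy says that for $x=\Phi(y)$ the trajectory is $\Phi(\sigma+\Omega_1t,\beta+\Omega_2t,I)$. Differentiating at $t=0$ gives
\[
U(\Phi(y))=\Omega_1(I)\,e_\sigma+\Omega_2(I)\,e_\beta,
\]
so the contravariant components are $(\Omega_1,\Omega_2,0)$. Equivalently, $(\Phi^*U)^a=(D\Phi)^{-1}U\circ\Phi$ equals the vector field of \eqref{eq:Baldi-integrable-system}. Here $\Omega_1=\chi(\Kcal)\Kcal'$ already contains the cutoff, consistent with $U=\chi(P_0)U_0$.

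Third, I will show that the cutoff is nonzero on the band. By definition of $G_\chi$ we have $\chi(\Kcal(I))\ne0$ for $I\in[I_-,I_+]$. By \eqref{eq:Baldi-pressure-action}, at $x=\Phi(\sigma,\beta,I)\in\mathcal A$ one has $\chi(P_0(x))=\chi(\Kcal(I))\neq0$. By continuity and compactness, $|\chi(P_0)|$ is moreover bounded below on $\mathcal A$, so $\Omega_1$ is bounded away from zero there.

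The only point requiring care is the boundary/embedding claim: one must check that the closed band lies inside the open set $S^*$ where Baldi's chart is defined, and that analyticity extends to the boundary tori. Both follow because $[I_-,I_+]$ is strictly interior to $(0,I^*)$. Indeed, $\Phi$ is analytic on the open set $\T^2\times(0,I^*)$, which contains a neighborhood of the band, so no genuine obstacle arises.
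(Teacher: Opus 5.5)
Your proposal is correct and follows essentially the same restriction argument as the paper. You restrict Baldi's analytic volume-preserving chart to the compactly contained interval $[I_-,I_+]\Subset(0,I^*)$, read off $U^a=(\Omega_1,\Omega_2,0)$ from the conjugacy, and obtain nonvanishing of the cutoff from $P_0\circ\Phi=\Kcal(I)$ together with $[I_-,I_+]\subset G_\chi$. One small point: the strict monotonicity of $\Kcal$ that you use for the nesting should be quoted from Baldi's theorem on all of $(0,I^*)$, as the paper does, rather than deduced from the small-$I$ expansion of $\Kcal$. This is minor, since the closed-band structure already follows from $\mathcal A$ being diffeomorphic to $\T^2\times[I_-,I_+]$.
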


\begin{proof}
Baldi's Theorem~1.1 provides an analytic diffeomorphism
$\Phi:\T^2\times(0,I^*)\to S^*$ satisfying
$\det D\Phi=1$ and associates the particle dynamics to
\eqref{eq:Baldi-integrable-system}.  It also shows
$P_0\circ\Phi=\Kcal(I)$ with $\Kcal$ strictly increasing, and
\begin{equation*}
  \Omega_1(I)=\chi(\Kcal(I))\Kcal'(I),
  \qquad
  \Omega_2(I)=\sqrt I\,\mathcal R(I)\Omega_1(I).
\end{equation*}
Since the chosen localized equilibrium is nontrivial, $G_\chi$ is a
nonempty open subset of $(0,I^*)$. We obtain \eqref{eq:Phi}--\eqref{eq:U-action-angle} by restricting Baldi's map to any interval
satisfying \eqref{eq:interior-action-interval}.  Compact containment in
$(0,I^*)$ also shows that this restriction extends analytically to an open
neighborhood of the closed band, which verifies the asserted analytic
diffeomorphism in the category of manifolds with boundary.  Finally,
$[I_-,I_+]\subset G_\chi$ implies the nonvanishing of the cutoff.
\end{proof}

\subsection{The linearized velocity operator}
\label{sec-linear}

For $\varepsilon>0$, consider the perturbation 
\begin{equation*}
  u=U+\varepsilon v,
  \qquad
  p=P+\varepsilon q+O(\varepsilon^2).
\end{equation*}
The linearization of \eqref{eq:euler} about $(U,P)$ is
\begin{equation}
  \partial_t v+\nabla_Uv+\nabla_vU+\nabla q=0,
  \qquad \Div v=0
  \label{eq:linearized-velocity-invariant}
\end{equation}
where $\nabla$ denotes the Euclidean Levi--Civita connection.

For a vector field $w=w^ae_a$, the covariant derivative is
\begin{equation*}
  (\nabla_vw)^a=v^b\partial_bw^a+
  \Gamma^a_{bc}v^bw^c.
\end{equation*}
Applying \eqref{eq:U-action-angle}, the transport and deformation terms in
\eqref{eq:linearized-velocity-invariant} are therefore
\begin{equation}
  (\nabla_Uv+\nabla_vU)^a
  =\Om\cdot\nabla_\theta v^a
   +v^I\partial_IU^a
   +2\Gamma^a_{bc}U^bv^c
  \label{eq:transport-deformation-coordinate}
\end{equation}
with
\begin{equation*}
\Om=(\Omega_1, \Omega_2), \quad
  \partial_IU^a
  =\bigl(\Omega_1'(I),\Omega_2'(I),0\bigr)^a.
\end{equation*}
As a consequence of Proposition~\ref{prop:first-geometric-expansion}, the connection contribution to the linearized operator is
\begin{equation*}
  2\Gamma^a_{bc}U^b v^c
  =2\left[
    \Omega_1(I)\Gamma_\sigma+
    \Omega_2(I)\Gamma_\beta
  \right]^a{}_c v^c.
\end{equation*}
Thus the coordinate form of the linearized equation is
\begin{equation*}
  \partial_t v^a
  +\Om\cdot\nabla_\theta v^a
  +v^I\partial_IU^a
  +2\Gamma^a_{bc}U^bv^c
  +g^{ab}\partial_bq=0,
  \qquad
  \partial_av^a=0.
\end{equation*}

Let $\PP_g$ denote the pullback of the Leray projection to the
action--angle chart.  Formally,
\begin{equation}
  \PP_gF=F-\grad_g\Delta_g^{-1}\Div_gF.
  \label{eq:Leray}
\end{equation}
The linearized velocity operator is
\begin{equation*}
  \partial_tv=L_vv,
  \qquad
  L_vv=-\PP_g\bigl(\nabla_Uv+\nabla_vU\bigr).
\end{equation*}

For an axisymmetric perturbation, due to independency of $\beta$,  equation \eqref{eq:transport-deformation-coordinate} becomes
\begin{equation}
  (\nabla_Uv+\nabla_vU)^a
  =\Omega_1(I)\partial_\sigma v^a
   +v^I\partial_IU^a
   +2\Gamma^a_{bc}U^bv^c,
  \label{eq:axisymmetric-transport-deformation}
\end{equation}
with constraint
\begin{equation*}
  \partial_\sigma v^\sigma+\partial_Iv^I=0.
\end{equation*}

\section{Angular modes and the localized frequency profile}
\label{sec-cutoff}

For an angular Fourier mode
\begin{equation*}
  e^{ik\cdot\theta}
  =e^{i(n\sigma+m\beta)},
  \qquad k=(n,m)\in\mathbb Z^2,
\end{equation*}
the principal transport operator becomes
\begin{equation*}
  \partial_t+i\lambda_k(I),
  \qquad
  \lambda_k(I)=k\cdot\Om(I)
  =n\Omega_1(I)+m\Omega_2(I).
\end{equation*}
The mode is nondegenerately sheared on a band if
\begin{equation}
  |\lambda_k'(I)|
  =|n\Omega_1'(I)+m\Omega_2'(I)|\ge c_0>0.
  \label{eq:twist-condition}
\end{equation}
For an axisymmetric mode $m=0$, condition \eqref{eq:twist-condition} reduces to
\begin{equation*}
  |\Omega_1'(I)|\ge c_0>0.
\end{equation*}

Recall Baldi's notation,
\begin{equation*}
  \Omega_1(I)=\chi(\Kcal(I))\Kcal'(I),
  \qquad
  \Kcal(I)=I+\frac{1065}{1024}I^3+O(I^4),
\end{equation*}
hence
\begin{equation*}
  \Omega_1'(I)
  =\chi'(\Kcal(I))(\Kcal'(I))^2
   +\chi(\Kcal(I))\Kcal''(I),
\end{equation*}
where
\begin{equation*}
  \Kcal''(I)=\frac{6390}{1024}I+O(I^2)>0
\end{equation*}
for sufficiently small $I>0$.  If $\chi$ is positive and constant on a
plateau containing $[I_-,I_+]$ in action space, then
\begin{equation*}
  \Omega_1'(I)=\chi\,\Kcal''(I)>0
\end{equation*}
on a sufficiently small interior band.  Hence a standard cutoff with a
positive plateau gives nondegenerate shearing even for axisymmetric modes
$n\ne0$.

We now make the cutoff and the interior band concrete.  Since $\Kcal$ is
analytic,
$\Kcal(I)=I+(1065/1024)I^3+O(I^4)$, there are constants
$I_{\mathrm{an}}>0$ and $C_{\Kcal}>0$ such that
\begin{equation}
  \left|\Kcal''(I)-\frac{3195}{512}I\right|
  \le C_{\Kcal}I^2,
  \qquad 0\le I\le I_{\mathrm{an}}.
  \label{eq:K-second-derivative-remainder}
\end{equation}
Let $p_{\mathrm{adm}}>0$ denote the universal upper endpoint for admissible
cutoff supports provided by Remark~1.2 of \cite{Baldi2024}.  Choose $\iota>0$
so small that
\begin{equation}
  \begin{gathered}
    \frac52\iota<\min\{I^*,I_{\mathrm{an}}\},
    \qquad
    5C_{\Kcal}\iota\le\frac{3195}{512},\\
    \Kcal(5\iota/2)<p_{\mathrm{adm}}.
  \end{gathered}
  \label{eq:iota-smallness}
\end{equation}

\begin{lemma}[Uniform small-action convexity]
\label{lem:uniform-small-action-convexity}
For the choice of $\iota$ in \eqref{eq:iota-smallness}, we have
\begin{equation}
  \Kcal''(I)\ge\frac{3195}{1024}I>0,
  \qquad 0<I\le\frac52\iota.
  \label{eq:K-second-derivative-lower-bound}
\end{equation}
\end{lemma}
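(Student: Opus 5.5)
The bound follows by combining the remainder estimate \eqref{eq:K-second-derivative-remainder} with the smallness conditions \eqref{eq:iota-smallness}. Fix $0<I\le\frac52\iota$. The first condition in \eqref{eq:iota-smallness} gives $\frac52\iota<I_{\mathrm{an}}$, so $I$ lies in the interval $[0,I_{\mathrm{an}}]$ where \eqref{eq:K-second-derivative-remainder} is valid. That estimate yields
\begin{equation*}
  \Kcal''(I)\ge\frac{3195}{512}I-C_{\Kcal}I^2
  =I\Bigl(\frac{3195}{512}-C_{\Kcal}I\Bigr).
\end{equation*}

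Next we bound the error term $C_{\Kcal}I^2$ by half of the leading term. Since $I\le\frac52\iota$, we have $C_{\Kcal}I\le\frac52C_{\Kcal}\iota$. The second condition in \eqref{eq:iota-smallness}, $5C_{\Kcal}\iota\le\frac{3195}{512}$, then gives
\begin{equation*}
  C_{\Kcal}I\le\frac12\cdot\frac{3195}{512}=\frac{3195}{1024}.
\end{equation*}
Substituting this into the previous display,
\begin{equation*}
  \Kcal''(I)\ge I\Bigl(\frac{3195}{512}-\frac{3195}{1024}\Bigr)=\frac{3195}{1024}I,
\end{equation*}
and the right-hand side is strictly positive because $I>0$. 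This proves \eqref{eq:K-second-derivative-lower-bound}.

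No step here is delicate. The one point to verify is that the constant $\frac{3195}{512}$ in \eqref{eq:K-second-derivative-remainder} matches the coefficient of $I$ in $\Kcal''$. This follows from \eqref{eq:Baldi-expansion-summary}: differentiating $\frac{1065}{1024}I^3$ twice gives $\frac{6390}{1024}I=\frac{3195}{512}I$. The derivative expansion with an $O(I^2)$ remainder, uniform on $[0,I_{\mathrm{an}}]$, is legitimate because $\Kcal$ is analytic, so its convergent power series can be differentiated termwise.
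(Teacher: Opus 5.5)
Your proof is correct and follows the paper's own argument: apply the remainder estimate \eqref{eq:K-second-derivative-remainder} on $[0,I_{\mathrm{an}}]$, then use $C_{\Kcal}I\le\frac52C_{\Kcal}\iota\le\frac{3195}{1024}$ from \eqref{eq:iota-smallness} to absorb the error into half of the leading term. Your check that $\frac{3195}{512}$ comes from differentiating $\frac{1065}{1024}I^3$ twice is consistent with the expansion recorded in Section~\ref{sec-cutoff}.
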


\begin{proof}
It follows from equations \eqref{eq:K-second-derivative-remainder} and
\eqref{eq:iota-smallness} that for $0<I\le5\iota/2$,
\[
  \Kcal''(I)
  \ge I\left(\frac{3195}{512}-C_{\Kcal}I\right)
  \ge I\left(\frac{3195}{512}
    -\frac52C_{\Kcal}\iota\right)
  \ge\frac{3195}{1024}I.
\]
\end{proof}

Since $\Kcal$ is strictly increasing, the uncut-pressure values
\begin{equation*}
  p_0=\Kcal(\iota/2),\qquad
  p_1=\Kcal(3\iota/4),\qquad
  p_2=\Kcal(9\iota/4),\qquad
  p_3=\Kcal(5\iota/2)
\end{equation*}
satisfy $0<p_0<p_1<p_2<p_3$.

For an explicit smooth step, define
\begin{equation}
  E(s):=
  \begin{cases}
    0,&s\le0,\\
    e^{-1/s},&s>0,
  \end{cases}
  \qquad
  H(s):=\frac{E(s)}{E(s)+E(1-s)}.
  \label{eq:smooth-step}
\end{equation}
Thus $H=0$ on $(-\infty,0]$ and $H=1$ on $[1,\infty)$.  Fix a
plateau height $\chi_*>0$ and choose the Gavrilov cutoff to be
\begin{equation}
  \chi_\iota(p)
  :=\chi_*
  H\!\left(\frac{p-p_0}{p_1-p_0}\right)
  H\!\left(\frac{p_3-p}{p_3-p_2}\right).
  \label{eq:explicit-pressure-cutoff}
\end{equation}
This is an admissible $C^\infty$ cutoff in $P_0$: its support is contained in
$[p_0,p_3]\Subset(0,p_{\mathrm{adm}})$, and
\begin{equation*}
  \chi_\iota(p)=\chi_*
  \quad\hbox{for }p\in[p_1,p_2].
\end{equation*}
Thus the cutoff has a positive plateau in action space on $[3\iota/4,9\iota/4]$.

\section{Floquet growth for the bicharacteristic--amplitude system}
\label{sec-Floquet-growth}
In this section, we show that the bicharacteristic--amplitude system on a cutoff-generated critical torus has a Floquet multiplier outside the unit circle and hence an exponentially growing velocity amplitude.  We first express
the ray and amplitude equations in action--angle variables and compute the
physical velocity gradient.  We then reduce the constrained amplitude equation
to an planar cocycle, identify and solve its neutral normal limit, and
compute the first normal correction.  Finally, we evaluate the averaged
correction on the critical torus and obtain a hyperbolic Floquet multiplier.

\subsection{Action--angle rays and the bicharacteristic--amplitude system}

Consider a rapidly oscillating perturbation of the form
\begin{equation*}
  v^\varepsilon(y,t)
  \sim a(y,t)e^{iS(y,t)/\varepsilon}.
\end{equation*}
The eikonal equation associated with the principal transport term is
\begin{equation}
  \partial_tS+\Om(I)\cdot\nabla_\theta S=0.
  \label{eq:eikonal}
\end{equation}
For initial phase
\begin{equation*}
  S(0,\theta,I)=k\cdot\theta+\ell I,
\end{equation*}
the solution is explicit:
\begin{equation}
  S(t,\theta,I)
  =k\cdot\theta+\ell I-t\,k\cdot\Om(I).
  \label{eq:phase-solution}
\end{equation}
Let $q=\dd_y S$ denote the coordinate wave covector.  It follows from \eqref{eq:phase-solution} that
\begin{equation}
  q_\theta(t)=k,
  \qquad
  q_I(t)=\ell-t\,k\cdot\Om'(I).
  \label{eq:wave-covector}
\end{equation}

The corresponding velocity-amplitude equation is the classical
bicharacteristic-amplitude system from \cite{FriedlanderVishik1991, LifschitzHameiri1991}, given by
\begin{align}
  \dot x&=U(x),
  \label{eq:LH-x}\\
  \dot\xi&=-\bigl(DU(x)\bigr)^{\mathsf T}\xi,
  \label{eq:LH-xi}\\
  \dot a&=-DU(x)a
  +2\frac{\xi\cdot(DU(x)a)}{|\xi|^2}\,\xi,
  \qquad a\cdot\xi=0,
  \label{eq:LH-a}
\end{align}
where $x(t)$ denotes the trajectory and $\xi=\dd_x S=\mathsf E(y)^{-\mathsf T}q$ is the physical covector.

We now compute the velocity gradient in the bicharacteristic-amplitude system.
Fix
\begin{equation}
  y_0=(\theta_0,I_0),\qquad
  y(t)=(\theta_0+t\Om(I_0),I_0),\qquad
  x(t)=\Phi(y(t)),
  \label{eq:selected-torus-trajectory}
\end{equation}
and denote
\begin{equation}
  \begin{aligned}
    \mathsf E(y)&=D_y\Phi(y),\\
    \mathsf S(I)&:=
    \begin{pmatrix}
      0&0&\Omega_1'(I)\\
      0&0&\Omega_2'(I)\\
      0&0&0
    \end{pmatrix},\\
    \mathsf C(y)&:=\Omega_1(I)\Gamma_\sigma(y)
      +\Omega_2(I)\Gamma_\beta(y).
  \end{aligned}
  \label{eq:E-S-C-definitions}
\end{equation}
For a physical vector $V=V^a(\theta, I)e_a$, 
we have
\begin{equation*}
  V=\mathsf E(y)(V^a)_{a=1}^3,
  \qquad
  \nabla_VU
  =\mathsf E(y)\bigl[(\mathsf S(I)+\mathsf C(y))(V^a)_{a=1}^3\bigr].
\end{equation*}
It follows that along the selected invariant torus,
\begin{equation}
  DU(x(t))
  =\mathsf E(y(t))
   \bigl[\mathsf S(I_0)+\mathsf C(y(t))\bigr]
   \mathsf E(y(t))^{-1}.
  \label{eq:DU-selected-torus}
\end{equation}

In the moving cylindrical frame $(e_\rho,e_\varphi,e_z)$, denote
\begin{equation}
  V_\rho=\Omega_1\rho_\sigma,\qquad
  V_\varphi=\rho(\Omega_2+\Omega_1\gamma_\sigma),\qquad
  V_z=\Omega_1\zeta_\sigma.
  \label{eq:physical-velocity-components}
\end{equation}
It follows from the volume identity \eqref{eq:meridional-Jacobian-exact} that
\begin{equation*}
  D_\rho=\rho(\zeta_I\partial_\sigma-\zeta_\sigma\partial_I),\qquad
  D_z=\rho(-\rho_I\partial_\sigma+\rho_\sigma\partial_I).
\end{equation*}
Since the flow is axisymmetric, its physical velocity gradient is 
\begin{equation}
  [DU]_{\mathrm{cyl}}=
  \begin{pmatrix}
    D_\rho V_\rho&-V_\varphi/\rho&D_zV_\rho\\
    D_\rho V_\varphi&V_\rho/\rho&D_zV_\varphi\\
    D_\rho V_z&0&D_zV_z
  \end{pmatrix}.
  \label{eq:DU-cylindrical-exact}
\end{equation}

More precisely, on the plateau, \eqref{eq:Omega2},
\eqref{eq:Baldi-expansion-summary}, and
Proposition~\ref{prop:first-geometric-expansion} imply
\begin{equation}
  \Omega_1=\chi_*+O(r_0^4),\qquad
  \Omega_2=\frac{\chi_*}{\sqrt2}r_0+O(r_0^3), \quad r_0=\sqrt{2I_0}
  \label{eq:plateau-frequency-r-expansion}
\end{equation}
and we have in the moving cylindrical frame,
\begin{equation}
  [DU(x(t))]_{\mathrm{cyl}}
  =\chi_*
  \begin{pmatrix}
    0&0&1\\
    \frac{\sin\sigma(t)}{\sqrt2}&0&\frac{\cos\sigma(t)}{\sqrt2}\\
    -1&0&0
  \end{pmatrix}
  +O(r_0),\qquad
  \sigma(t)=\sigma_0+\Omega_1(I_0)t.
  \label{eq:DU-cylindrical-leading}
\end{equation}
The remainder is uniform in $t$ and in the initial angles.  Thus, the leading coefficients of $DU$ are
periodic with meridional period $2\pi/|\Omega_1(I_0)|$ in the bounded rotating frame.

\begin{remark}
Let $X_t$ be the physical flow of $U$, set
$F(t):=DX_t(x_0)$.  Then
$\dot F=DU(x(t))F$ and $F(0)=\mathrm{Id}$.  If
$W=\nabla\times U$ and $b=\xi\times a$, a direct calculation from
\eqref{eq:LH-xi}--\eqref{eq:LH-a} gives
\begin{equation}
  \dot b=DU(x(t))b+(W(x(t))\cdot\xi(t))a(t),
  \qquad
  \frac{\dd}{\dd t}(W(x(t))\cdot\xi(t))=0.
  \label{eq:correct-LH-vorticity-equation}
\end{equation}
Thus, when $W\cdot\xi=0$, one has $b(t)=F(t)b(0)$.  However, $b(t)=F(t)b(0)$ is not true for general rays. 
\end{remark}

\subsection{Reduction to a plane}

The constraint $a\cdot\xi=0$ confines the velocity amplitude to the moving
two-dimensional plane $\xi(t)^\perp$.  The purpose of this
subsection is to convert the constrained three-dimensional
bicharacteristic-amplitude equation into a planar cocycle. The resulting trace-free system isolates the genuinely hyperbolic part of the amplitude
dynamics; its monodromy and determinant will provide the instability
mechanism.  This reduction applies to general rays including
those for which the deformation-gradient shortcut in the preceding remark
fails.

Denote
\begin{equation*}
  \nu(t):=\frac{\xi(t)}{|\xi(t)|}.
\end{equation*}
Choose a smooth orthonormal frame
\begin{equation}
  Q(t)=\bigl(q_1(t),q_2(t)\bigr),\qquad
  Q^{\mathsf T}Q=\mathrm{Id}_2,\qquad Q^{\mathsf T}\nu=0,
  \label{eq:amplitude-plane-frame}
\end{equation}
and take $a=Qz$.

\begin{lemma}[Trace-free polarization cocycle]
\label{lem:exact-trace-free-polarization-cocycle}
Let $(x,\xi,a)$ solve the bicharacteristic-amplitude system
\eqref{eq:LH-x}--\eqref{eq:LH-a} on an interval containing $t=0$ and on
which $\xi(t)\ne0$.  For any smooth orthonormal frame $Q$ satisfying
\eqref{eq:amplitude-plane-frame}, the coordinate amplitude $z$ satisfies
the planar system
\begin{equation}
\dot z=B(t)z,\qquad
  B(t)=-Q^{\mathsf T}DU(x(t))Q-Q^{\mathsf T}\dot Q.
  \label{eq:exact-two-dimensional-amplitude-system}
\end{equation}
with the trace
\begin{equation}
  \operatorname{tr}B
  =\nu\cdot DU(x(t))\nu
  =-\frac{\dd}{\dd t}\log|\xi|.
  \label{eq:B-trace-identity}
\end{equation}
Consequently, the normalized coordinate amplitude
\begin{equation}
  Y(t):=\left(\frac{|\xi(t)|}{|\xi(0)|}\right)^{1/2}z(t)
  \label{eq:normalized-coordinate-amplitude}
\end{equation}
satisfies the trace-free system
\begin{equation}
\dot Y=\mathcal B(t)Y,\qquad
  \mathcal B=B-\frac12(\operatorname{tr}B)\mathrm{Id}_2,
  \qquad \operatorname{tr}\mathcal B=0.
  \label{eq:exact-trace-free-amplitude-system}
\end{equation}
\end{lemma}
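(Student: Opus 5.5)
The plan is to differentiate the ansatz $a=Qz$ directly and project onto the plane $\xi^\perp$ using $Q^{\mathsf T}$. Since $Q^{\mathsf T}Q=\mathrm{Id}_2$, we have $z=Q^{\mathsf T}a$, and so
\[
\dot z=\dot Q^{\mathsf T}a+Q^{\mathsf T}\dot a.
\]
Write $A:=DU(x(t))$ for brevity. Insert \eqref{eq:LH-a}. The correction term $2\,\xi\cdot(Aa)\,\xi/|\xi|^2$ is parallel to $\nu$, so $Q^{\mathsf T}$ annihilates it. This leaves $Q^{\mathsf T}\dot a=-Q^{\mathsf T}AQz$. For the frame term, note that $a=Qz$ gives $\dot Q^{\mathsf T}a=\dot Q^{\mathsf T}Qz$. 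Differentiating $Q^{\mathsf T}Q=\mathrm{Id}_2$ yields $\dot Q^{\mathsf T}Q=-Q^{\mathsf T}\dot Q$. Together these give \eqref{eq:exact-two-dimensional-amplitude-system}. Nothing about the constraint is lost: $a\cdot\xi=0$ is preserved by \eqref{eq:LH-xi}--\eqref{eq:LH-a}, which is the standard fact (one checks $\frac{\dd}{\dd t}(a\cdot\xi)=-(a\cdot\xi)\,\mathrm{tr}$-type terms vanish given the projection), so $a$ stays in the range of $Q$.

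For the trace identity \eqref{eq:B-trace-identity}, first observe that $Q^{\mathsf T}\dot Q$ is antisymmetric, again by differentiating $Q^{\mathsf T}Q=\mathrm{Id}_2$, so it has zero trace. Hence
\[
\operatorname{tr}B=-\operatorname{tr}(Q^{\mathsf T}AQ)=-\operatorname{tr}(QQ^{\mathsf T}A).
\]
Since $(Q,\nu)$ is an orthonormal basis, $QQ^{\mathsf T}=\mathrm{Id}_3-\nu\nu^{\mathsf T}$. Therefore $\operatorname{tr}B=-\operatorname{tr}A+\nu\cdot A\nu$. Incompressibility gives $\operatorname{tr}A=\Div U=0$, so $\operatorname{tr}B=\nu\cdot A\nu$.

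Next, from \eqref{eq:LH-xi},
\[
\frac{\dd}{\dd t}\tfrac12|\xi|^2=-\xi\cdot A^{\mathsf T}\xi=-\xi\cdot A\xi.
\]
Dividing by $|\xi|^2$ gives $\frac{\dd}{\dd t}\log|\xi|=-\nu\cdot A\nu$. This holds on the interval because $\xi\neq0$ there.

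Finally, set $Y=\lambda z$ with $\lambda=(|\xi|/|\xi(0)|)^{1/2}$. Then $\dot\lambda/\lambda=\frac12\frac{\dd}{\dd t}\log|\xi|=-\frac12\operatorname{tr}B$, so
\[
\dot Y=\bigl(B+\tfrac{\dot\lambda}{\lambda}\bigr)Y=\bigl(B-\tfrac12\operatorname{tr}B\,\mathrm{Id}_2\bigr)Y,
\]
which is \eqref{eq:exact-trace-free-amplitude-system}.

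The only delicate point is sign bookkeeping. The same identity $\nu\cdot A\nu$ enters both $\operatorname{tr}B$ (through the normal projection and $\Div U=0$) and $\frac{\dd}{\dd t}\log|\xi|$ (through the transpose in \eqref{eq:LH-xi}), with opposite signs, and these two signs must be matched correctly. Everything else is linear algebra and needs no earlier result beyond $\Div U=0$.
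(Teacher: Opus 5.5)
Your proof is correct and follows the paper's argument essentially line by line. You project the amplitude equation with $Q^{\mathsf T}$, which kills the $\xi$-parallel term; you get the trace from the skew-symmetry of $Q^{\mathsf T}\dot Q$ together with $\operatorname{tr}DU=0$ in the orthonormal frame $(q_1,q_2,\nu)$; and you match it against $\frac{\dd}{\dd t}\log|\xi|=-\nu\cdot DU\nu$ from the covector equation.

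The only blemish is the vague parenthetical about preserving the constraint. In fact $\frac{\dd}{\dd t}(a\cdot\xi)=0$ holds identically along the system. In any case $a\cdot\xi=0$ is part of the hypothesis, so writing $z=Q^{\mathsf T}a$ is legitimate.
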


\begin{proof}
Such a frame exists on every time interval since the rank-two vector
bundle $\nu(t)^\perp$ over an interval is trivial.

Substituting $a=Qz$ into \eqref{eq:LH-a} and applying $Q^{\mathsf T}$ we get
\[
  \dot z+Q^{\mathsf T}\dot Qz
  =-Q^{\mathsf T}DU(x(t))Qz
\]
which verifies \eqref{eq:exact-two-dimensional-amplitude-system}, where we used $Q^{\mathsf T}\nu=0$.

For the component form, denote
\begin{equation*}
  \alpha_{ij}:=q_i\cdot DU(x(t))q_j,
  \qquad \omega:=q_1\cdot\dot q_2.
\end{equation*}
Since $Q^{\mathsf T}\dot Q$ is skew-symmetric, one has
\begin{equation*}
  B=
  \begin{pmatrix}
    -\alpha_{11}&-\alpha_{12}-\omega\\
    -\alpha_{21}+\omega&-\alpha_{22}
  \end{pmatrix}.
\end{equation*}
The frame $(q_1,q_2,\nu)$ is orthonormal and
$\operatorname{tr}DU(x(t))=0$.  Therefore
\[
  \operatorname{tr}B
  =-q_1\cdot DU(x(t))q_1-q_2\cdot DU(x(t))q_2
  =\nu\cdot DU(x(t))\nu.
\]
On the other hand, we have from \eqref{eq:LH-xi} 
\[
  \frac{\dd}{\dd t}\log|\xi|
  =\nu\cdot\frac{\dot\xi}{|\xi|}
  =-\nu\cdot DU(x(t))^{\mathsf T}\nu
  =-\nu\cdot DU(x(t))\nu,
\]
which implies \eqref{eq:B-trace-identity}.

It then follows from \eqref{eq:B-trace-identity} and \eqref{eq:normalized-coordinate-amplitude} that
\[
  \dot Y
  =\left(B+\frac12\frac{\dd}{\dd t}\log|\xi|\,\mathrm{Id}_2\right)Y
  =\left(B-\frac12(\operatorname{tr}B)\mathrm{Id}_2\right)Y,
\] 
and
\begin{equation*}
  \mathcal B=
  \begin{pmatrix}
    \dfrac{\alpha_{22}-\alpha_{11}}2&-\alpha_{12}-\omega\\[2mm]
    -\alpha_{21}+\omega&\dfrac{\alpha_{11}-\alpha_{22}}2
  \end{pmatrix}.
\end{equation*}
The physical amplitude is reconstructed from the normalized polarization by
\begin{equation*}
  a(t)=\left(\frac{|\xi(0)|}{|\xi(t)|}\right)^{1/2}Q(t)Y(t).
\end{equation*}

Let $\mathcal U_Y(t,0)$ denote the fundamental matrix of the normalized system. Liouville’s formula and $\operatorname{tr}\mathcal B=0$ imply
\[
\det\mathcal U_Y(t,0)
=\exp\!\left(\int_0^t\operatorname{tr}\mathcal B(s)\,ds\right)=1.
\]
Hence the normalized amplitude evolution belongs to $SL(2,\mathbb R)$ and preserves oriented area in the polarization coordinates.
\end{proof}

\begin{lemma}\label{lem:LH-amplitude-area-identity}
For the physical bicharacteristic-amplitude evolution on the two-dimensional
polarization plane $\xi(t)^\perp$, the solution operator satisfies
\begin{equation}
  \bigl|\det\mathcal U_a(t)\bigr|
  =\frac{|\xi(0)|}{|\xi(t)|}.
  \label{eq:LH-amplitude-area-Jacobian}
\end{equation}
\end{lemma}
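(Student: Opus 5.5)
The plan is to read the identity off Lemma~\ref{lem:exact-trace-free-polarization-cocycle} through the factorization of $a$ in terms of $Y$. By that lemma, in any smooth orthonormal frame $Q(t)$ of $\xi(t)^\perp$ we have
\[
a(t)=\left(\frac{|\xi(0)|}{|\xi(t)|}\right)^{1/2}Q(t)\,\mathcal U_Y(t,0)\,Q(0)^{\mathsf T}a(0),
\]
where $\mathcal U_Y(t,0)\in SL(2,\mathbb R)$ is the fundamental matrix of the trace-free system \eqref{eq:exact-trace-free-amplitude-system}. Hence the physical solution operator restricted to the polarization planes is
\[
\mathcal U_a(t)=\left(\frac{|\xi(0)|}{|\xi(t)|}\right)^{1/2}Q(t)\,\mathcal U_Y(t,0)\,Q(0)^{\mathsf T}:\ \xi(0)^\perp\to\xi(t)^\perp .
\]

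The first step is to make precise what $\det\mathcal U_a(t)$ means for a map between different two-dimensional subspaces. I will define it as the determinant of the matrix of $\mathcal U_a(t)$ with respect to orthonormal bases of $\xi(0)^\perp$ and $\xi(t)^\perp$. Its absolute value is then independent of the choice of bases, since changing orthonormal bases multiplies the determinant by $\pm1$. Equivalently, $|\det\mathcal U_a(t)|$ is the factor by which $\mathcal U_a(t)$ scales Euclidean area.

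The second step is to compute this determinant in the bases $Q(0)$ and $Q(t)$. In these bases the matrix of $\mathcal U_a(t)$ is $(|\xi(0)|/|\xi(t)|)^{1/2}\,\mathcal U_Y(t,0)$, which is a $2\times2$ matrix. Taking the determinant squares the scalar prefactor, and $\det\mathcal U_Y(t,0)=1$ by Liouville's formula as already shown in Lemma~\ref{lem:exact-trace-free-polarization-cocycle}. This gives \eqref{eq:LH-amplitude-area-Jacobian}.

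As an independent check, and as a route that avoids the normalization altogether, I will apply Liouville's formula directly to \eqref{eq:exact-two-dimensional-amplitude-system}. This gives $\det\mathcal U_z=\exp\int_0^t\operatorname{tr}B$, and by \eqref{eq:B-trace-identity} $\int_0^t\operatorname{tr}B=-\log\bigl(|\xi(t)|/|\xi(0)|\bigr)$, so $\det\mathcal U_z=|\xi(0)|/|\xi(t)|$.

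No step should present a real obstacle. The only point needing care is the bookkeeping: fixing the meaning of the determinant between distinct planes and confirming that the frame term $Q^{\mathsf T}\dot Q$ contributes nothing. That term is skew-symmetric and therefore has zero trace, so it does not enter $\operatorname{tr}B$; this was already used in \eqref{eq:B-trace-identity}. The absolute value in \eqref{eq:LH-amplitude-area-Jacobian} absorbs any orientation ambiguity in the choice of frames.
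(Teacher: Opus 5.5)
Your proposal is correct and matches the paper's proof. The paper's argument is exactly your ``independent check'': Liouville's formula for $\dot z=Bz$ in the orthonormal frames $Q(0)$, $Q(t)$, combined with the trace identity \eqref{eq:B-trace-identity}. Your primary route through $\mathcal U_Y\in SL(2,\mathbb R)$ is the same computation in another form, since $\det\mathcal U_Y=1$ was itself derived from that trace identity.
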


\begin{proof}
Let $\Psi(t,0)$ denote the coordinate evolution operator for
$\dot z=B(t)z$ in the orthonormal frames $Q(0)$ and $Q(t)$ of
$\xi(0)^\perp$ and $\xi(t)^\perp$, respectively.  Since the frames
preserve Euclidean area,
\begin{equation}
  \bigl|\det\mathcal U_a(t)\bigr|=|\det\Psi(t,0)|.
  \label{eq:physical-coordinate-area-identification}
\end{equation}
It follows from Liouville's formula and \eqref{eq:B-trace-identity} that
\begin{equation*}
  \det\Psi(t,0)
  =\exp\left(\int_0^t\operatorname{tr}B(s)\,\dd s\right)
  =\exp\left(\int_0^t
    \nu(s)\cdot DU(x(s))\nu(s)\,\dd s\right).
\end{equation*}
On the other hand, the wave-covector equation implies
\begin{equation*}
  \frac{\dd}{\dd t}\log|\xi(t)|
  =-\nu(t)\cdot DU(x(t))\nu(t).
\end{equation*}
Consequently,
\[
  \det\Psi(t,0)
  =\exp\bigl(\log|\xi(0)|-\log|\xi(t)|\bigr)
  =\frac{|\xi(0)|}{|\xi(t)|}.
\]
Combining this identity with
\eqref{eq:physical-coordinate-area-identification} concludes 
\eqref{eq:LH-amplitude-area-Jacobian}.
\end{proof}

\subsection{The normal limiting cocycle and its identity monodromy}

In this subsection we isolate and solve the leading amplitude dynamics when the wave covector becomes large in the direction normal to an invariant torus.  The resulting periodic system is a zeroth-order reference cocycle: its identity monodromy shows that cumulative growth must come from the first normal-frequency correction.  Conjugation by its explicit fundamental matrix then exposes the correction responsible for exponential Floquet growth when the normal frequency is frozen at a critical torus.

Denote
\begin{equation*}
  U=U_{\mathrm m}+V_\varphi\mathbf e_\varphi,
  \qquad
  U_{\mathrm m}:=V_\rho\mathbf e_\rho+V_z\mathbf e_z.
\end{equation*}
\begin{lemma}\label{lem:asymptotic-normal-limiting-cocycle}
Let $I_0$ lie on an active regular invariant torus on which
\begin{equation*}
  |\nabla I|>0,
  \qquad \Omega_1(I_0)\ne0,
  \qquad |U_{\mathrm m}|>0.
\end{equation*}
Let $q_0=(n,m,\ell)$ and $k=(n,m)$, and assume
\begin{equation}
  \lambda_k'(I_0)=k\cdot\Om'(I_0)\ne0.
  \label{eq:pointwise-twist-selected-torus}
\end{equation}
Then there is a trace-free coefficient $\mathcal B_\infty(t)$,
periodic in the cylindrical identification with meridional period $T_0:=\frac{2\pi}{|\Omega_1(I_0)|}$,
such that the trace-free coefficient $\mathcal B(t)$ given in \eqref{eq:exact-trace-free-amplitude-system} associated with a particularly chosen frame $Q(t)$ satisfies
\begin{equation}
  \mathcal B(t)=\mathcal B_\infty(t)+O(t^{-1})
  \qquad (t\to+\infty)
  \label{eq:amplitude-coefficient-asymptotics}
\end{equation}
where the remainder is uniform in the initial meridional phase on the selected
torus.
\end{lemma}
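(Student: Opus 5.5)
Our approach is to use the explicit phase solution \eqref{eq:wave-covector}. The coordinate covector along the ray is $q(t)=(n,m,\ell-t\lambda_k'(I_0))$. By the twist hypothesis \eqref{eq:pointwise-twist-selected-torus} its normal component grows linearly: $q_I(t)=-t\lambda_k'(I_0)(1+O(t^{-1}))$, while $q_\theta=k$ stays fixed. The physical covector is $\xi(t)=\mathsf E(y(t))^{-\mathsf T}q(t)$, and $\mathsf E^{-\mathsf T}$ maps the coordinate differential $\dd I$ to $\nabla I$. Hence
\[
\xi(t)=q_I(t)\,\nabla I(x(t))+\mathsf E(y(t))^{-\mathsf T}(k,0),
\]
so that
\[
\nu(t)=\operatorname{sgn}\bigl(-\lambda_k'(I_0)\bigr)\frac{\nabla I}{|\nabla I|}(x(t))+O(t^{-1}),
\]
using $|\nabla I|>0$ on the compact torus. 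Differentiating, and using the time derivative of $\nabla I/|\nabla I|$ along the orbit, gives $\dot\nu(t)=\dot\nu_\infty(t)+O(t^{-1})$, where $\nu_\infty:=\pm\nabla I/|\nabla I|\circ x(t)$. Uniformity in the initial phase holds because every coefficient is a smooth function on the compact torus $\T^2\times\{I_0\}$, evaluated along the linear flow.

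Next we choose the frame. The limiting normal $\nu_\infty$ is orthogonal to the torus, and the torus is spanned by $e_\varphi$ and the meridional velocity $U_{\mathrm m}$, which is tangent to the meridional level curve. Since $|U_{\mathrm m}|>0$, the limiting frame
\[
Q_\infty=\bigl(\mathbf e_\varphi,\;U_{\mathrm m}/|U_{\mathrm m}|\bigr)
\]
is a smooth orthonormal frame of $\nu_\infty^\perp$. It depends only on $\sigma(t)$ in the cylindrical identification, so it is $T_0$-periodic there, with the rotation of $\mathbf e_\varphi,\mathbf e_\rho$ in $\varphi$ quotiented out. For the actual frame we take $Q(t)$ to be the Gram--Schmidt orthonormalization of $(I-\nu\nu^{\mathsf T})Q_\infty$. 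Since $\nu-\nu_\infty=O(t^{-1})$ and $\dot\nu-\dot\nu_\infty=O(t^{-1})$, this gives $Q=Q_\infty+O(t^{-1})$ and $\dot Q=\dot Q_\infty+O(t^{-1})$. The projection is a smooth map of $\nu$ near $\nu_\infty$, and it is well defined because $Q_\infty$ is transverse to $\nu$ for large $t$.

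With this frame we set
\[
B_\infty:=-Q_\infty^{\mathsf T}DU\,Q_\infty-Q_\infty^{\mathsf T}\dot Q_\infty,\qquad \mathcal B_\infty:=B_\infty-\tfrac12(\operatorname{tr}B_\infty)\mathrm{Id}_2 .
\]
Here $DU(x(t))$ is given in the moving cylindrical frame by \eqref{eq:DU-cylindrical-exact}, whose entries depend on $\sigma(t)$ and $I_0$ only, so it is $T_0$-periodic in that identification. The remaining coefficients $\operatorname{tr}B=\nu\cdot DU\nu$ and $\mathcal B$ are smooth in $(Q,\dot Q,\nu)$, and the orthogonal rotation of the cylindrical frame about the axis cancels in $Q^{\mathsf T}DU\,Q$ and in $Q^{\mathsf T}\dot Q$ up to the same periodic term. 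Lipschitz dependence then yields \eqref{eq:amplitude-coefficient-asymptotics}. The trace-freeness of $\mathcal B_\infty$ holds by construction.

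The main obstacle is the $\dot Q$ term. One must check that the derivative of the perturbation $\nu-\nu_\infty$ is $O(t^{-1})$ rather than $O(1)$. Writing $\nu-\nu_\infty=c(t)/t+O(t^{-2})$ with $c$ bounded and smooth along the orbit, we get $\tfrac{\dd}{\dd t}(c/t)=\dot c/t-c/t^2=O(t^{-1})$, since $\dot c$ is bounded. This requires expanding $\xi/|\xi|$ to first order in $1/q_I$. We would verify this via the exact expression
\[
\xi/|\xi|=\frac{\nabla I+q_I^{-1}w}{|\nabla I+q_I^{-1}w|},\qquad w=\mathsf E^{-\mathsf T}(k,0),
\]
with $\dot q_I=-\lambda_k'$ constant.
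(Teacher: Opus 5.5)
Your proposal is correct and is essentially the paper's argument. It uses the same splitting $\xi=s\,\nabla I+\eta_k$ with $|s|\asymp t$, the same $O(t^{-1})$ control of $\nu$ and $\dot\nu$, and the same Lipschitz comparison of $B$ with $B_\infty$; your frame even coincides with the paper's, since Gram--Schmidt applied to the projected pair $(\mathbf e_\varphi,\boldsymbol\tau)$ gives exactly the normalized projection of $\mathbf e_\varphi$ onto $\nu^\perp$ together with $\pm$ its cross product with $\nu$. Two minor points: your exact formula for $\xi/|\xi|$ should carry the factor $\operatorname{sgn} q_I$ (constant for large $t$, so harmless), and the paper orders the limiting frame as $(\boldsymbol\tau,\mathbf e_\varphi)$, which does not matter here but is what produces the triangular $\mathcal B_\infty$ in the next lemma.
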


\begin{proof}
In physical covector notation, \eqref{eq:wave-covector} becomes
\begin{equation}
  \xi(t)=\bigl(\ell-t\lambda_k'(I_0)\bigr)\nabla I(x(t))
  +\eta_k(t),
  \qquad
  \eta_k(t):=\mathsf E(y(t))^{-\mathsf T}(n,m,0)^{\mathsf T}.
  \label{eq:physical-twisted-covector-decomposition}
\end{equation}
To justify the periodicity, recall from \eqref{eq:Phi-explicit}
that the cylindrical representation of $\mathsf E=D\Phi$ is
\begin{equation*}
  [\mathsf E]_{\mathrm{cyl}}
  =\widehat{\mathsf E}(\sigma,I)
  :=
  \begin{pmatrix}
    \rho_\sigma&0&\rho_I\\
    \rho\gamma_\sigma&\rho&\rho\gamma_I\\
    \zeta_\sigma&0&\zeta_I
  \end{pmatrix}.
\end{equation*}
It is independent of $\beta$, and therefore
\begin{equation}
  [\eta_k(t)]_{\mathrm{cyl}}
  =\widehat{\mathsf E}(\sigma(t),I_0)^{-\mathsf T}
   (n,m,0)^{\mathsf T}.
  \label{eq:eta-k-cylindrical-periodic}
\end{equation}
Since $\det\widehat{\mathsf E}=1$ and its entries are smooth and
$2\pi$-periodic in $\sigma$, it follows from
\eqref{eq:eta-k-cylindrical-periodic} that $[\eta_k(t)]_{\mathrm{cyl}}$ and its
$\sigma$-derivatives are bounded and periodic.  In view of 
$\sigma(t)=\sigma_0+\Omega_1(I_0)t$, the cylindrical components and their time derivatives are also $T_0$-periodic.  The cylindrical frame has angular velocity
\begin{equation*}
  \dot\varphi(t)
  =\Omega_2(I_0)
   +\Omega_1(I_0)\gamma_\sigma(\sigma(t),I_0),
\end{equation*}
which is also bounded and $T_0$-periodic.  Hence  $\dot\eta_k$, when expressed in cylindrical components, is bounded and periodic.  Denote
\[
  s(t):=\ell-t\lambda_k'(I_0),
  \qquad \mathbf n:=\frac{\nabla I}{|\nabla I|},
  \qquad \Pi_T:=\mathrm{Id}-\mathbf n\otimes\mathbf n.
\]
Regularity of the torus implies $|\nabla I|>0$.  Since
$\lambda_k'(I_0)\ne0$, we have $|s(t)|\asymp t$, and the sign
$\operatorname{sgn}s(t)$ is constant for all sufficiently large
$t$.  Expanding $\xi/|\xi|$ in terms of $|s|^{-1}$ we obtain
\begin{equation}
  \nu(t)
  =(\operatorname{sgn}s(t))\mathbf n(t)
   +\frac{\Pi_T(t)\eta_k(t)}
          {|s(t)|\,|\nabla I(x(t))|}
   +O(|s(t)|^{-2}).
  \label{eq:unit-covector-asymptotics}
\end{equation}
It follows from \eqref{eq:unit-covector-asymptotics} that
\begin{equation}
  \nu(t)-(\operatorname{sgn}s(t))\mathbf n(t)=O(t^{-1}),
  \qquad
  \dot\nu(t)-(\operatorname{sgn}s(t))\dot{\mathbf n}(t)=O(t^{-1}).
  \label{eq:unit-covector-C1-asymptotics}
\end{equation}

For sufficiently large $t$, choose the polarization frame
$Q(t)=(q_1(t),q_2(t))$ by taking $q_2$ to be the normalized projection of
$\mathbf e_\varphi$ onto $\nu(t)^\perp$ and setting either
$q_1=q_2\times\nu$ or $q_1=-q_2\times\nu$, with the choice of sign so
that $q_1\to U_{\mathrm m}/|U_{\mathrm m}|$.
Since $\mathbf e_\varphi\cdot\mathbf n=0$, we have
\begin{equation}
  q_2(t)
  =\frac{\mathbf e_\varphi-(\mathbf e_\varphi\cdot\nu)\nu}
  {\sqrt{1-(\mathbf e_\varphi\cdot\nu)^2}}.
  \label{eq:projected-polarization-frame}
\end{equation}
Denote
\begin{equation}\label{eq:limiting-tangent-frame}
  \boldsymbol\tau:=\frac{U_{\mathrm m}}{|U_{\mathrm m}|},
  \qquad Q_\infty:=(\boldsymbol\tau,\mathbf e_\varphi).
\end{equation}
One choice of the sign gives
$q_1\to\boldsymbol\tau$.  It follows from
\eqref{eq:unit-covector-C1-asymptotics} and
\eqref{eq:projected-polarization-frame} that
\begin{equation}
  \|Q(t)-Q_\infty(t)\|
  +\|\dot Q(t)-\dot Q_\infty(t)\|
  \le \frac{C}{1+t}
  \label{eq:polarization-frame-C1-convergence}
\end{equation}
in the moving orthonormal cylindrical frame $(\mathbf e_\rho(t),\mathbf e_\varphi(t),\mathbf e_z)$.

In the same frame, $DU(x(t))$ and $Q_\infty(t)$ are
$T_0$-periodic and uniformly bounded.  Define
\begin{equation*}
  B_\infty(t)
  :=-Q_\infty(t)^{\mathsf T}DU(x(t))Q_\infty(t)
    -Q_\infty(t)^{\mathsf T}\dot Q_\infty(t).
\end{equation*}
Applying $B=-Q^{\mathsf T}DU(x(t))Q-Q^{\mathsf T}\dot Q$ and
\eqref{eq:polarization-frame-C1-convergence}, we obtain
\begin{equation}
  \|B(t)-B_\infty(t)\|
  \le C\bigl(
    \|Q-Q_\infty\|+\|\dot Q-\dot Q_\infty\|
  \bigr)
  \le\frac{C}{1+t}.
  \label{eq:unnormalized-limiting-B-bound}
\end{equation}
Finally, the operation
\[
  B\longmapsto B-\frac12(\operatorname{tr}B)\mathrm{Id}_2
\]
is linear and bounded, which combined with \eqref{eq:unnormalized-limiting-B-bound} implies \eqref{eq:amplitude-coefficient-asymptotics}.  It also shows that $\mathcal B_\infty$ is trace-free and $T_0$-periodic.  
\end{proof}

\begin{lemma}\label{lem:explicit-normal-limiting-cocycle}
On an active regular torus on which $|U_{\mathrm m}|>0$ and
$V_\varphi\ne0$, the coefficient
in Lemma~\ref{lem:asymptotic-normal-limiting-cocycle} is
\begin{equation}
  \mathcal B_\infty(t)=
  \begin{pmatrix}
    -\gamma(t)&\eta(t)\\
    0&\gamma(t)
  \end{pmatrix},
  \label{eq:explicit-trace-free-limiting-B}
\end{equation}
where
\begin{equation}
  \gamma
    =\frac{1}{4|U_{\mathrm m}|^2}
       \frac{\dd}{\dd t}|U_{\mathrm m}|^2
    +\frac{\dot V_\varphi}{2V_\varphi}
    =\frac{1}{4|U_{\mathrm m}|^2}
       \frac{\dd}{\dd t}|U_{\mathrm m}|^2
      -\frac{\dot\rho}{2\rho},
  \qquad
  \eta
  =\frac{1}{V_\varphi|U_{\mathrm m}|}
    \frac{\dd}{\dd t}|U_{\mathrm m}|^2.
  \label{eq:gamma-eta-limiting-system}
\end{equation}
\end{lemma}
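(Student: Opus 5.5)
The plan is to compute $\mathcal B_\infty=B_\infty-\frac12(\operatorname{tr}B_\infty)\mathrm{Id}_2$ directly from the definition $B_\infty=-Q_\infty^{\mathsf T}DU(x(t))Q_\infty-Q_\infty^{\mathsf T}\dot Q_\infty$ in the proof of Lemma~\ref{lem:asymptotic-normal-limiting-cocycle}, with $Q_\infty=(\boldsymbol\tau,\mathbf e_\varphi)$. Write $\alpha_{ij}=q_i\cdot DU q_j$ and $\omega=q_1\cdot\dot q_2$, as in Lemma~\ref{lem:exact-trace-free-polarization-cocycle}. The formula for $\mathcal B$ given there reduces the claim to three identities: $\alpha_{21}=\omega$, $\frac{\alpha_{22}-\alpha_{11}}2=-\gamma$, and $-\alpha_{12}-\omega=\eta$.

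I would avoid the explicit matrix \eqref{eq:DU-cylindrical-exact} and use two structural identities along the trajectory instead.
\begin{enumerate}
\item[(a)] $DU\,U=\dot U$, since $U$ is steady and $\dot x=U$.
\item[(b)] $DU\,K=DK\,U=\dot K$ for the axisymmetry Killing field $K=\rho\mathbf e_\varphi$, since axisymmetry gives $[U,K]=0$.
\end{enumerate}
From (b), with $\dot{\mathbf e}_\varphi=-\dot\varphi\,\mathbf e_\rho$, we get $DU\mathbf e_\varphi=\frac{\dot\rho}{\rho}\mathbf e_\varphi-\dot\varphi\,\mathbf e_\rho$. From (a), $DU\boldsymbol\tau=|U_{\mathrm m}|^{-1}\bigl(\dot U-V_\varphi DU\mathbf e_\varphi\bigr)$. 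In the rotating frame,
\[
\dot U=(\dot V_\rho-V_\varphi\dot\varphi)\mathbf e_\rho+(\dot V_\varphi+V_\rho\dot\varphi)\mathbf e_\varphi+\dot V_z\mathbf e_z,
\]
and along the trajectory $\dot\rho=V_\rho$ and $\dot\varphi=V_\varphi/\rho$.

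The frame rotation term is $\omega=\boldsymbol\tau\cdot\dot{\mathbf e}_\varphi=-\dot\varphi\,\tau_\rho=-V_\varphi V_\rho/(\rho|U_{\mathrm m}|)$. The entries then come out as follows.
\begin{enumerate}
\item[(i)] $\alpha_{22}=\dot\rho/\rho$.
\item[(ii)] $\alpha_{12}=-\dot\varphi\,\tau_\rho$, so $-\alpha_{12}-\omega=2V_\varphi V_\rho/(\rho|U_{\mathrm m}|)$.
\item[(iii)] $\alpha_{11}=|U_{\mathrm m}|^{-2}\bigl(U_{\mathrm m}\cdot\dot U+V_\varphi\dot\varphi V_\rho\bigr)$. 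Here $U_{\mathrm m}\cdot\dot U=\frac12\frac{\dd}{\dd t}|U_{\mathrm m}|^2-V_\rho V_\varphi\dot\varphi$, so the rotation terms cancel and $\alpha_{11}=\frac{1}{2|U_{\mathrm m}|^2}\frac{\dd}{\dd t}|U_{\mathrm m}|^2$. This gives $\frac{\alpha_{22}-\alpha_{11}}2=-\gamma$ with the second form of $\gamma$ in \eqref{eq:gamma-eta-limiting-system}.
\item[(iv)] $\alpha_{21}=|U_{\mathrm m}|^{-1}\bigl(\dot V_\varphi+V_\rho\dot\varphi-V_\varphi\dot\rho/\rho\bigr)$.
\end{enumerate}

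Two physical facts convert these expressions into the stated form, and this is the main point to verify. The first is conservation of the swirl $\rho V_\varphi$ along particle paths, which holds for axisymmetric steady Euler because $\rho V_\varphi=b(a)\chi(P_0)$ is a function of $P_0$. It gives $\dot V_\varphi=-V_\varphi\dot\rho/\rho$, so $\alpha_{21}=-V_\varphi V_\rho/(\rho|U_{\mathrm m}|)=\omega$. Hence the lower-left entry $-\alpha_{21}+\omega$ vanishes, and the two forms of $\gamma$ coincide.

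The second fact is that $|U|^2=\chi(P_0)^2|U_0|^2=3\chi(P_0)^2P_0$ by \eqref{eq:Gavrilov-first-integrals}, which is constant on each torus. Hence
\[
\frac{\dd}{\dd t}|U_{\mathrm m}|^2=-\frac{\dd}{\dd t}V_\varphi^2=2V_\varphi^2\dot\rho/\rho .
\]
Substituting into (ii) gives $-\alpha_{12}-\omega=\frac{1}{V_\varphi|U_{\mathrm m}|}\frac{\dd}{\dd t}|U_{\mathrm m}|^2=\eta$.

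The hypotheses $V_\varphi\ne0$ and $|U_{\mathrm m}|>0$ are exactly what is needed to divide in these formulas. The step I expect to be most delicate is bookkeeping of signs and orientation: the choice of sign of $q_1$, the convention $\omega=q_1\cdot\dot q_2$, and the rotating-frame terms in $\dot U$. I would double-check these against the leading-order matrix \eqref{eq:DU-cylindrical-leading}.
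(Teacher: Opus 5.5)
Your proposal is correct. It produces the same unnormalized matrix as the paper, namely $B_{11}=-\frac{1}{2|U_{\mathrm m}|^2}\frac{\dd}{\dd t}|U_{\mathrm m}|^2$, $B_{12}=\eta$, $B_{21}=0$ and $B_{22}=-\dot\rho/\rho=\dot V_\varphi/V_\varphi$, but by a different route.

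\textbf{The paper's route.} The paper never computes $\alpha_{ij}$ or $\omega$. It exhibits two explicit solutions, $a_1=U$ and $a_2=U_{\mathrm m}/|U_{\mathrm m}|^2$, of the normal limiting amplitude equation. Verifying them uses $DU\,U=-\nabla P\parallel\mathbf n$, $[U,K]=0$, the tangential projection of $DU\,K$, and the invariants $|U|^2$ and $L$. It then assembles the fundamental matrix $Z_\infty$ in the frame $(\boldsymbol\tau,\mathbf e_\varphi)$ and reads off $B_\infty=\dot Z_\infty Z_\infty^{-1}$.

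\textbf{Your route.} You work entrywise from the definition $B_\infty=-Q_\infty^{\mathsf T}DU\,Q_\infty-Q_\infty^{\mathsf T}\dot Q_\infty$. This is more elementary. It needs only steadiness ($DU\,U=\dot U$), axisymmetry ($DU\,K=\dot K$), and the two conservation laws. It never uses that $DU\,U$ is normal to the torus, and it never requires checking that a candidate vector solves the constrained equation. It also makes visible where each invariant enters: conservation of $\rho V_\varphi$ kills the lower-left entry and makes the two forms of $\gamma$ agree, while conservation of $|U|^2$ puts the upper-right entry in the form $\eta$.

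\textbf{What each buys.} The paper's route yields the explicit periodic fundamental matrix $Z_\infty$, which is used immediately afterwards. The identity monodromy in Proposition~\ref{prop:identity-limiting-monodromy} is then visible from the periodicity of $U$ and $U_{\mathrm m}/|U_{\mathrm m}|^2$. Moreover, $Z_\infty$ is the conjugating matrix in $K=Z_\infty^{-1}DZ_\infty$ of Lemma~\ref{lem:post-limiting-Floquet-reduction}. With your route you would recover $Z_\infty$ by integrating the upper-triangular system, which is straightforward.

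\textbf{Sign conventions.} The bookkeeping you flagged as delicate is consistent with Lemma~\ref{lem:exact-trace-free-polarization-cocycle}: $q_1=\boldsymbol\tau$, $q_2=\mathbf e_\varphi$, $\omega=q_1\cdot\dot q_2$. In particular, $\dot Q_\infty$ must be the physical derivative, including $\dot{\mathbf e}_\varphi=-\dot\varphi\,\mathbf e_\rho$, as you took it. Differentiating only the cylindrical components would set $\omega=0$ and give the wrong off-diagonal entries.
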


\begin{proof}
We compute $\mathcal B_\infty$ indirectly. We shall first show $|U|^2$ and $L:=\rho V_\varphi$ are torus invariants and that allows us to construct two independent solutions of the normal limiting physical amplitude equation, from which we obtain a fundamental matrix $Z_\infty$ and hence $B_\infty=\dot Z_\infty Z_\infty^{-1}$.  

We claim $|U|^2$ and $L$ are constant on the torus.  Indeed, it follows from
\eqref{eq:Gavrilov-first-integrals} and \eqref{eq:localized-pressure-primitive} that
\begin{equation*}
  |U|^2=3\chi(P_0)^2P_0.
\end{equation*}
Since $P_0\circ\Phi=\Kcal(I)$ by
\eqref{eq:Baldi-pressure-action}, it follows that $|U|^2$ depends only on
$I$.  Likewise, \eqref{eq:localized-pressure-action} shows that the localized
  steady pressure satisfies $P=P(I)=\mathcal W(\Kcal(I))$.
For the angular momentum, the azimuthal component of the steady
axisymmetric Euler equation gives
\begin{equation*}
  U\cdot\nabla L
  =V_\rho\partial_\rho(\rho V_\varphi)
   +V_z\partial_z(\rho V_\varphi)=0.
\end{equation*}
Moreover, due to axisymmetry $\partial_\beta L=0$, we have in action--angle
coordinates,
\[
  0=U\cdot\nabla L
   =\Omega_1(I)\partial_\sigma L
    +\Omega_2(I)\partial_\beta L
   =\Omega_1(I)\partial_\sigma L.
\]
Since $\Omega_1(I_0)\ne0$ on the active regular torus, it follows
$\partial_\sigma L=0$, and therefore $L=L(I)$.  Thus, on the selected torus,
the values of $|U|^2$ and $L=L(I_0)$ are constant along the orbit, and
\begin{equation}
  V_\varphi=\frac L\rho,
  \qquad
  |U_{\mathrm m}|^2=|U|^2-\frac{L^2}{\rho^2}.
  \label{eq:meridional-speed-invariants}
\end{equation}
For the small active bands used above, we have $L\ne0$ after decreasing $\iota$ if
necessary in view of \eqref{eq:physical-velocity-components} and \eqref{eq:plateau-frequency-r-expansion}; hence the assumption $V_\varphi\ne0$ holds.

Next we construct two solutions of the limiting amplitude equation:
\begin{equation*}
  a_1(t)=U(x(t)),
  \qquad
  a_2(t)
  =\frac{U_{\mathrm m}(x(t))}{|U_{\mathrm m}(x(t))|^2}.
\end{equation*}
We claim they are periodic in the cylindrical coordinates.  Indeed,
$DU(x(t))U=-\nabla P$ is parallel to $\nabla I$.  We have the normal limiting
bicharacteristic-amplitude equation
\begin{equation}
  \dot a=-DU(x(t))a
  +2\mathbf n\bigl(\mathbf n\cdot DU(x(t))a\bigr),
  \qquad a\cdot\mathbf n=0.
  \label{eq:normal-limiting-LH-equation}
\end{equation}
For $a_1=U$, one can verify that $\dot a_1=DU(x(t))U$ and
$\mathbf n(\mathbf n\cdot DU(x(t))U)=DU(x(t))U$; hence $a_1$ is a
solution of \eqref{eq:normal-limiting-LH-equation}.

To verify $a_2$, denote $K:=\rho\mathbf e_\varphi$ and a tangent amplitude $a=xU+yK$.
Axisymmetry shows $[U,K]=0$ and $\dot K=DU(x(t))K$, while
\begin{equation*}
  DU(x(t))K=-V_\varphi\mathbf e_\rho+V_\rho\mathbf e_\varphi.
\end{equation*}
Recall $\Pi_T$ is orthogonal projection onto the tangent plane of the invariant
torus, and hence
\begin{equation}
  \Pi_TDU(x(t))K
  =-\frac{V_\varphi V_\rho}{|U_{\mathrm m}|^2}U
   +\frac{|U|^2V_\rho}{\rho|U_{\mathrm m}|^2}K.
  \label{eq:tangential-DU-K}
\end{equation}
Since $\Pi_TDU(x(t))U=0$, tangential projection of
\eqref{eq:normal-limiting-LH-equation} yields
\begin{equation*}
  \dot x\,U+\dot y\,K=-2y\,\Pi_TDU(x(t))K.
\end{equation*}
In view of \eqref{eq:tangential-DU-K} we infer
\begin{equation}
  \dot x=2\frac{V_\varphi V_\rho}{|U_{\mathrm m}|^2}y,
  \qquad
  \dot y=-2\frac{|U|^2V_\rho}
                   {\rho|U_{\mathrm m}|^2}y.
  \label{eq:limiting-amplitude-UK-system}
\end{equation}
Besides $(x,y)=(1,0)$, this system has the solution
\begin{equation*}
  x=\frac1{|U_{\mathrm m}|^2},
  \qquad
  y=-\frac{L}{\rho^2|U_{\mathrm m}|^2},
\end{equation*}
Indeed, $\dot\rho=V_\rho$ and
\eqref{eq:meridional-speed-invariants} imply
\begin{equation*}
  \frac{\dd}{\dd t}|U_{\mathrm m}|^2
  =\frac{2L^2V_\rho}{\rho^3}
  =\frac{2V_\varphi^2V_\rho}{\rho}.
\end{equation*}
Consequently,
\begin{equation*}
  \dot x
  =-\frac{\frac{\dd}{\dd t}|U_{\mathrm m}|^2}
          {|U_{\mathrm m}|^4}
  =-\frac{2V_\varphi^2V_\rho}
          {\rho|U_{\mathrm m}|^4}
  =2\frac{V_\varphi V_\rho}{|U_{\mathrm m}|^2}y,
\end{equation*}
while
\begin{equation*}
  \frac{\dot y}{y}
  =-2\frac{V_\rho}{\rho}
   -\frac{\frac{\dd}{\dd t}|U_{\mathrm m}|^2}
          {|U_{\mathrm m}|^2}
  =-2\frac{|U|^2V_\rho}{\rho|U_{\mathrm m}|^2}.
\end{equation*}
Thus $(x,y)$ solves
\eqref{eq:limiting-amplitude-UK-system}, and
\begin{equation*}
  xU+yK
  =\frac{U}{|U_{\mathrm m}|^2}
   -\frac{L}{\rho|U_{\mathrm m}|^2}\mathbf e_\varphi
  =\frac{U_{\mathrm m}}{|U_{\mathrm m}|^2}.
\end{equation*}

In the frame \eqref{eq:limiting-tangent-frame}, these two solutions form the
fundamental matrix
\begin{equation}
  Z_\infty(t)=
  \begin{pmatrix}
    |U_{\mathrm m}|&1/|U_{\mathrm m}|\\
    V_\varphi&0
  \end{pmatrix}.
  \label{eq:limiting-fundamental-Z}
\end{equation}
It is invertible since $|U_{\mathrm m}|>0$ and $V_\varphi\ne0$.  Therefore
\begin{equation}
  B_\infty=\dot Z_\infty Z_\infty^{-1}
  =\begin{pmatrix}
    -\dfrac{1}{2|U_{\mathrm m}|^2}
       \dfrac{\dd}{\dd t}|U_{\mathrm m}|^2
    &
    \dfrac{1}{V_\varphi|U_{\mathrm m}|}
       \dfrac{\dd}{\dd t}|U_{\mathrm m}|^2\\[4mm]
    0&\dfrac{\dot V_\varphi}{V_\varphi}
  \end{pmatrix}.
  \label{eq:explicit-limiting-B}
\end{equation}
Since $\det\mathsf E=1$, we have
$e_\sigma\times e_\beta=\nabla I$.  Let
$(e_\sigma)_{\mathrm m}$ denote the meridional part of $e_\sigma$.  It follows from $e_\beta=\rho\mathbf e_\varphi$ that
\begin{equation*}
  |\nabla I|=|e_\sigma\times e_\beta|
  =\rho |(e_\sigma)_{\mathrm m}|.
\end{equation*}
Moreover, $U=\Omega_1e_\sigma+\Omega_2e_\beta$ implies
\begin{equation*}
  U_{\mathrm m}=\Omega_1(e_\sigma)_{\mathrm m},
  \qquad
  |U_{\mathrm m}|
  =|\Omega_1(I_0)|\,|(e_\sigma)_{\mathrm m}|.
\end{equation*}
Combining the equations above yields
\begin{equation*}
  \rho|U_{\mathrm m}|
  =|\Omega_1(I_0)|\,|\nabla I|.
\end{equation*}
Together with $\dot V_\varphi/V_\varphi=-\dot\rho/\rho$, this verifies
\begin{equation*}
  \operatorname{tr}B_\infty
  =-\frac{1}{2|U_{\mathrm m}|^2}
     \frac{\dd}{\dd t}|U_{\mathrm m}|^2
   -\frac{\dot\rho}{\rho}
  =-\frac{\dd}{\dd t}\log|\nabla I(x(t))|,
\end{equation*}
in agreement with \eqref{eq:B-trace-identity}.  Taking the trace-free part
of \eqref{eq:explicit-limiting-B} we obtain
\eqref{eq:explicit-trace-free-limiting-B}--
\eqref{eq:gamma-eta-limiting-system}.  Finally, $|U_{\mathrm m}|^2$ and
$V_\varphi$ are $T_0$-periodic in the cylindrical coordinate.
\end{proof}

\begin{proposition}\label{prop:identity-limiting-monodromy}
The trace-free periodic system \eqref{eq:explicit-trace-free-limiting-B}
has a fundamental matrix $\Phi_\infty(t)$ normalized by
$\Phi_\infty(0)=\mathrm{Id}_2$.  Its monodromy over one meridional period is
the identity:
\begin{equation}
  \Phi_\infty(T_0)=\mathrm{Id}_2.
  \label{eq:identity-limiting-monodromy}
\end{equation}
\end{proposition}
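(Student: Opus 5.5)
The plan is to write down an explicit fundamental matrix for the trace-free system \eqref{eq:explicit-trace-free-limiting-B} using the solution $Z_\infty$ already constructed in Lemma~\ref{lem:explicit-normal-limiting-cocycle}, and then check periodicity directly. Since $\operatorname{tr}B_\infty=-\frac{\dd}{\dd t}\log|\nabla I(x(t))|$, the trace-free system is obtained from $\dot Z=B_\infty Z$ by the scalar rescaling
\begin{equation*}
  \widetilde Z_\infty(t):=\left(\frac{|\nabla I(x(t))|}{|\nabla I(x(0))|}\right)^{1/2} Z_\infty(t),
\end{equation*}
exactly as in \eqref{eq:normalized-coordinate-amplitude}. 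Indeed,
\begin{equation*}
  \dot{\widetilde Z}_\infty=\Bigl(B_\infty-\tfrac12(\operatorname{tr}B_\infty)\mathrm{Id}_2\Bigr)\widetilde Z_\infty=\mathcal B_\infty\widetilde Z_\infty .
\end{equation*}
I then set
\begin{equation*}
  \Phi_\infty(t):=\widetilde Z_\infty(t)\,\widetilde Z_\infty(0)^{-1}.
\end{equation*}
This is a fundamental matrix with $\Phi_\infty(0)=\mathrm{Id}_2$. It is well defined because $\det Z_\infty=-V_\varphi/|U_{\mathrm m}|\neq0$ under the standing hypotheses $|U_{\mathrm m}|>0$ and $V_\varphi\ne0$. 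Uniqueness of solutions of linear ODEs with continuous coefficients then identifies it with the normalized fundamental matrix of the statement.

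Next comes periodicity. Every entry of $Z_\infty$ in \eqref{eq:limiting-fundamental-Z} is a function of $(\sigma(t),I_0)$ only: $|U_{\mathrm m}|$ by \eqref{eq:meridional-speed-invariants}, and $V_\varphi=L(I_0)/\rho(\sigma(t),I_0)$. The same holds for $|\nabla I(x(t))|=\rho|U_{\mathrm m}|/|\Omega_1(I_0)|$, which was obtained in the proof of Lemma~\ref{lem:explicit-normal-limiting-cocycle}. Each of these is $2\pi$-periodic in $\sigma$, since $\rho$, $\zeta$, $\gamma$ are smooth functions on $\T^2$ independent of $\beta$. Because $\sigma(T_0)=\sigma_0\pm2\pi$, we get $\widetilde Z_\infty(T_0)=\widetilde Z_\infty(0)$, and hence $\Phi_\infty(T_0)=\mathrm{Id}_2$.

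The one point requiring care is that the frame $Q_\infty=(\boldsymbol\tau,\mathbf e_\varphi)$ in which $Z_\infty$ is written also returns to itself after one meridional period. Otherwise the monodromy of the coordinate system could pick up a rotation. Here the frame $(\mathbf e_\rho,\mathbf e_\varphi,\mathbf e_z)$ rotates with the azimuth and does not return, but that does not matter. All coefficients, including $\mathcal B_\infty$ itself, are expressed relative to the moving cylindrical frame and then relative to $Q_\infty$. Both $\boldsymbol\tau=U_{\mathrm m}/|U_{\mathrm m}|$ and $\mathbf e_\varphi$ have cylindrical components depending only on $\sigma$, so they are $T_0$-periodic in this identification. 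This is the ``cylindrical identification'' of Lemma~\ref{lem:asymptotic-normal-limiting-cocycle}.

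Finally, I would remark that the same conclusion follows without the explicit formula. The system \eqref{eq:explicit-trace-free-limiting-B} is upper triangular. Its diagonal entries $\mp\gamma$ are exact derivatives of periodic functions, namely $\pm\frac{\dd}{\dd t}\log\bigl(|U_{\mathrm m}|^{1/2}\rho^{-1/2}\bigr)$. The off-diagonal solution integrates $e^{\mp\int\gamma}\eta\,e^{\mp\int\gamma}$, which equals $2\frac{\dd}{\dd t}\bigl(-\rho/(L|U_{\mathrm m}|)\bigr)$ up to constants, again an exact derivative of a periodic function, so the integral over a period vanishes. I would present the explicit-solution argument as the main proof and this check as confirmation.
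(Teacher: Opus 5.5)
Your proof is correct. It produces the same matrix as the paper, by a somewhat different derivation.

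The paper integrates the upper-triangular system \eqref{eq:explicit-trace-free-limiting-B} by quadrature. It sets $G_\infty=\int_0^t\gamma$ and writes $\Phi_\infty$ with diagonal entries $e^{\mp G_\infty}$ and upper entry $e^{-G_\infty}J_\infty$, where $J_\infty=\int_0^t\eta\,e^{2G_\infty}$. It then evaluates $G_\infty$ and $J_\infty$ in closed form as differences of $T_0$-periodic quantities. This is essentially your ``confirmation'' route.

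Your main argument instead rescales the already-constructed basis $Z_\infty$ by $(|\nabla I(x(t))|/|\nabla I(x(0))|)^{1/2}$. Up to a constant this factor equals $|\det Z_\infty(t)|^{-1/2}=(\rho|U_{\mathrm m}|/|L|)^{1/2}$, so you could bypass the trace identity and invoke Liouville directly. Expanding $\widetilde Z_\infty(t)\widetilde Z_\infty(0)^{-1}$ recovers exactly the paper's entries $e^{\mp G_\infty}$ and $e^{-G_\infty}J_\infty$.

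The two versions buy different things. Yours shows structurally why the monodromy is trivial: a full solution basis is periodic, being built from the physical solutions $U$ and $U_{\mathrm m}/|U_{\mathrm m}|^2$ and a periodic scalar. The paper's version is self-contained from $\gamma,\eta$ alone and gives the entries explicitly.

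Your confirmation remark has two slips, neither of which affects the main argument:
\begin{itemize}
\item The diagonal entries are $\mp\frac{\dd}{\dd t}\log\bigl(|U_{\mathrm m}|^{1/2}\rho^{-1/2}\bigr)$, not $\pm$.
\item The off-diagonal integrand is $\eta\,e^{2G_\infty}=\frac{\dd}{\dd t}|U_{\mathrm m}|^2/\bigl(V_\varphi(0)|U_{\mathrm m}(0)|\bigr)$. This is not a constant multiple of $\frac{\dd}{\dd t}\bigl(\rho/(L|U_{\mathrm m}|)\bigr)$.
\end{itemize}
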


\begin{proof}
Denote
\begin{equation*}
  G_\infty(t):=\int_0^t\gamma(s)\,\dd s
  =\frac14\log
    \frac{|U_{\mathrm m}(x(t))|^2}{|U_{\mathrm m}(x(0))|^2}
   +\frac12\log\frac{|V_\varphi(t)|}{|V_\varphi(0)|}.
\end{equation*}
The normalized fundamental matrix is
\begin{equation*}
  \Phi_\infty(t)=
  \begin{pmatrix}
    e^{-G_\infty(t)}&e^{-G_\infty(t)}J_\infty(t)\\
    0&e^{G_\infty(t)}
  \end{pmatrix},
  \qquad
  J_\infty(t):=\int_0^t\eta(s)e^{2G_\infty(s)}\,\dd s.
\end{equation*}
Since
\begin{equation*}
  e^{2G_\infty(t)}
  =\frac{|U_{\mathrm m}(x(t))|}{|U_{\mathrm m}(x(0))|}
   \frac{V_\varphi(t)}{V_\varphi(0)},
\end{equation*}
we obtain
\begin{equation*}
  J_\infty(t)
  =\frac{|U_{\mathrm m}(x(t))|^2-|U_{\mathrm m}(x(0))|^2}
         {V_\varphi(0)|U_{\mathrm m}(x(0))|}.
\end{equation*}
Since $|U_{\mathrm m}|^2$ and $V_\varphi$ are $T_0$-periodic in the cylindrical coordinate,  we have $G_\infty(T_0)=J_\infty(T_0)=0$, and 
\eqref{eq:identity-limiting-monodromy} follows immediately.
\end{proof}

\subsection{The first normal correction and its averaged generator}
\label{sec-normal}

We now compute the first correction from \eqref{eq:amplitude-coefficient-asymptotics}.  The analysis below for large $|s|$
applies also when $\lambda_k'(I_0)=0$.  

For a fixed ray with $\xi=s|\nabla I|\mathbf n+\eta_k$,  we introduce the periodic tangent
vectors
\begin{equation*}
  \mathbf r_k:=\frac{\Pi_T\eta_k}{|\nabla I|},
  \qquad
  \mathbf c:=\Pi_TDU(x(t))^{\mathsf T}\mathbf n,
  \qquad
  \mathbf d:=\Pi_TDU(x(t))\mathbf n,
\end{equation*}
and their components in the limiting frame
$Q_\infty=(\boldsymbol\tau,\mathbf e_\varphi)$,
\begin{equation}
  \mathsf r:=Q_\infty^{\mathsf T}\mathbf r_k,
  \qquad
  \mathsf c:=Q_\infty^{\mathsf T}\mathbf c,
  \qquad
  \mathsf d:=Q_\infty^{\mathsf T}\mathbf d.
  \label{eq:r-c-d-components}
\end{equation}

To identify the moving plane $\xi^\perp$ with the tangent plane
$\mathbf n^\perp$, we express a physical amplitude in the graph form
\begin{equation}
  a=p-
  \frac{p\cdot\eta_k}
       {s|\nabla I|+\mathbf n\cdot\eta_k}\,\mathbf n,
  \qquad p\cdot\mathbf n=0.
  \label{eq:amplitude-graph-map}
\end{equation}
We express the tangent part $p=Q_\infty z_{\mathrm{tan}}$ in the limiting tangent frame
$Q_\infty=(\boldsymbol\tau,\mathbf e_\varphi)$.

\begin{lemma}\label{lem:first-corrected-tangent-amplitude}
On a fixed compact active band, assume that $|s(t)|$ is sufficiently large.
Let $a$ solve the bicharacteristic-amplitude equation \eqref{eq:LH-a}. The tangent coordinate $z_{\mathrm{tan}}$ satisfies
\begin{equation}
  \dot z_{\mathrm{tan}}
  =\left(B_\infty(t)+\frac1{s(t)}D(t)+R_2(t)\right)z_{\mathrm{tan}}
  \label{eq:first-corrected-tangent-amplitude}
\end{equation}
with
\begin{equation}
  D=(\mathsf d-\mathsf c)\mathsf r^{\mathsf T}
      +2\mathsf r\mathsf c^{\mathsf T}.
  \label{eq:D-first-correction}
\end{equation}
Moreover,
\begin{equation}
  \|R_2(t)\|+\|\partial_tR_2(t)\|\le C|s(t)|^{-2},
  \label{eq:first-corrected-tangent-remainder}
\end{equation}
with a constant $C>0$ uniform on the band.
\end{lemma}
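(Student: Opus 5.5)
Substitute the graph ansatz \eqref{eq:amplitude-graph-map} into \eqref{eq:LH-a}, project onto the tangent plane with $\Pi_T$, and expand in powers of $1/s$. Write $\epsilon:=\mathbf n\cdot\eta_k$, $h:=s|\nabla I|+\epsilon$, so that $\xi=h\,\mathbf n+\Pi_T\eta_k$ and
\[
a=p-\frac{p\cdot\Pi_T\eta_k}{h}\,\mathbf n=p-\frac{(p\cdot\mathbf r_k)}{s}\mathbf n+O(s^{-2})|p|.
\]
Note that $a\cdot\xi=0$ is automatic. Since \eqref{eq:LH-a} preserves the constraint, it suffices to compute $\Pi_T\dot a$ and express it through $\dot p$. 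We have $\Pi_T\dot a=\dot p-(p\cdot\mathbf r_k/h)\Pi_T\dot{\mathbf n}+\Pi_T\dot p$-corrections. The term $\Pi_T\dot p=\dot p+(\dot{\mathbf n}\cdot p)\mathbf n$ contributes only normally. The term $\Pi_T\dot{\mathbf n}=\dot{\mathbf n}$ is tangent, and for an invariant torus of a steady flow $\dot{\mathbf n}=-\Pi_TDU^{\mathsf T}\mathbf n=-\mathbf c$. This is the normalized transport of $\nabla I$: it follows from $\frac{\dd}{\dd t}\nabla I=-DU^{\mathsf T}\nabla I$ after normalization, because the normal component is removed by $\Pi_T$.

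The right-hand side of \eqref{eq:LH-a} is handled by splitting $-DUa$ and the Leray-type term $2\nu(\nu\cdot DUa)/|\nu|^2$. Here $\nu=\mathbf n+\mathbf r_k/s+O(s^{-2})$, so
\[
\Pi_T\bigl(2\nu(\nu\cdot DUa)\bigr)=\frac{2}{s}\mathbf r_k(\mathbf n\cdot DUp)+O(s^{-2})=\frac2s\mathbf r_k(\mathbf c\cdot p)+O(s^{-2}).
\]
The corrections in $\nu\cdot DUa$ multiply $\Pi_T\nu=O(1/s)$, so they are of second order. Next,
\[
-\Pi_TDUa=-\Pi_TDUp+\frac{(p\cdot\mathbf r_k)}{s}\mathbf d+O(s^{-2}).
\]
The $s^0$ part is $-\Pi_TDUp+2\mathbf n$-terms, which reproduces the normal limiting equation \eqref{eq:normal-limiting-LH-equation}. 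Combined with the frame derivative $-Q_\infty^{\mathsf T}\dot Q_\infty$ (the term $\dot{Q}_\infty$ is tangent-valued after projection), it gives exactly $B_\infty$ in the frame $Q_\infty$, by the definition in the proof of Lemma~\ref{lem:asymptotic-normal-limiting-cocycle}.

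Collecting the $1/s$ terms, $\dot p$ equals the $B_\infty$ part plus $\frac1s\bigl[\mathbf d(\mathbf r_k\cdot p)+2\mathbf r_k(\mathbf c\cdot p)-\mathbf c(\mathbf r_k\cdot p)\bigr]$. The last term comes from moving $(p\cdot\mathbf r_k/s)\dot{\mathbf n}=-(p\cdot\mathbf r_k/s)\mathbf c$ to the left-hand side. Writing $p=Q_\infty z_{\rm tan}$ and applying $Q_\infty^{\mathsf T}$ yields $D=(\mathsf d-\mathsf c)\mathsf r^{\mathsf T}+2\mathsf r\mathsf c^{\mathsf T}$, which is \eqref{eq:D-first-correction}. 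The sign of the $\mathbf c$-term is the step I expect to be most delicate. One must carefully distinguish $\Pi_T\dot a$ from $\dot p$, using $\frac{\dd}{\dd t}(a\cdot\mathbf n)$ and $\dot{\mathbf n}=-\mathbf c$. I would double-check it by verifying that $\operatorname{tr}(D)/s$ matches the $1/s$ term of $-\frac{\dd}{\dd t}\log|\xi|$ from \eqref{eq:B-trace-identity}. The $s$-dependence of $\dot s=-\lambda_k'$ enters only through $\frac{\dd}{\dd t}(1/h)=O(s^{-2})$, so it affects only the remainder.

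Finally, every remainder is a smooth function of $(\sigma(t),1/h)$ times at least $h^{-2}$. Here $\mathbf r_k,\mathbf c,\mathbf d,DU,Q_\infty$ and their $\sigma$-derivatives are periodic and bounded on the compact active band, since $\widehat{\mathsf E}$ is analytic there and $|\nabla I|$ is bounded below. Since $|h|\asymp|s|$ for $|s|$ large and $\partial_t(1/h)=O(s^{-2})$, Taylor's theorem with remainder gives \eqref{eq:first-corrected-tangent-remainder}, with a constant uniform on the band. This argument never uses $\lambda_k'\neq0$, consistent with the remark preceding the lemma.
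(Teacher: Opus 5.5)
Your proposal is correct and follows the paper's proof essentially step for step: the graph map over $\mathbf n^\perp$, the identity $\Pi_T\dot a=\Pi_T\dot p-\alpha\dot{\mathbf n}$ with $\dot{\mathbf n}=-\mathbf c$, the $1/s$ expansion of the rank-one term, and passage to the frame $Q_\infty$ to recover $B_\infty$; your sign for the $\mathbf c$-term is the right one. One side claim needs fixing: $\partial_t(1/h)$ is only $O(1/s)$, not $O(s^{-2})$, because $\dot h=-(\mathbf n\cdot DU\mathbf n)h+O(1)$ and $|\nabla I|$ is not constant along the orbit; still, $\partial_t(1/h)=O(1/h)$ together with the $h^{-2}$ factor (and $\partial_t(1/s)=O(s^{-2})$) gives $\|\partial_tR_2\|\le C|s|^{-2}$, so the conclusion stands.
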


\begin{proof}
Note $z_{\mathrm{tan}}=Q_\infty^{\mathsf T}p=Q_\infty^{\mathsf T}Qz$. 
Denote
\[
  \alpha:=\frac{p\cdot\eta_k}
                {s|\nabla I|+\mathbf n\cdot\eta_k},
  \qquad a=p-\alpha\mathbf n.
\]
Since $p\cdot\mathbf n=0$ and
$\mathbf n\cdot\xi=s|\nabla I|+\mathbf n\cdot\eta_k$, one has
\[
  a\cdot\xi=p\cdot\eta_k
  -\alpha(s|\nabla I|+\mathbf n\cdot\eta_k)=0.
\]
Thus \eqref{eq:amplitude-graph-map} parametrizes $\xi^\perp$, and
$p=\Pi_Ta$.  It follows from
$p\cdot\eta_k=p\cdot\Pi_T\eta_k
=|\nabla I|\,\mathbf r_k\cdot p$ that
\begin{equation}
  \alpha
  =\frac{\mathbf r_k\cdot p}
  {s+(\mathbf n\cdot\eta_k)/|\nabla I|}
  =\frac1s(\mathbf r_k\cdot p)+O(s^{-2})|p|.
  \label{eq:graph-coefficient-first-expansion}
\end{equation}

Differentiate $a=p-\alpha\mathbf n$ and project onto
$\mathbf n^\perp$, and apply $\dot{\mathbf n}\perp\mathbf n$, we deduce
\begin{equation*}
  \Pi_T\dot a=\Pi_T\dot p-\alpha\dot{\mathbf n}.
\end{equation*}
The tangential projection of the right-hand side of \eqref{eq:LH-a} is
\begin{equation*}
  -\Pi_TDU(x(t))p+\alpha\mathbf d
  +2\frac{\xi\cdot DU(x(t))a}{|\xi|^2}
     |\nabla I|\,\mathbf r_k,
\end{equation*}
where we used $\Pi_TDU(x(t))\mathbf n=\mathbf d$ and
$\Pi_T\xi=\Pi_T\eta_k=|\nabla I|\,\mathbf r_k$.  Furthermore, we have
\begin{equation*}
\begin{split}
  \xi\cdot DU(x(t))a
  &=s|\nabla I|\,\mathbf n\cdot DU(x(t))p+O(1)|p|
  =s|\nabla I|(\mathbf c\cdot p)+O(1)|p|,\\
|\xi|^2&=s^2|\nabla I|^2+O(|s|),
  \end{split}
\end{equation*}
applying
$\mathbf n\cdot DU(x(t))p
=(\Pi_TDU(x(t))^{\mathsf T}\mathbf n)\cdot p
=\mathbf c\cdot p$.  Hence
\begin{equation}
  \frac{\xi\cdot DU(x(t))a}{|\xi|^2}
  |\nabla I|\,\mathbf r_k
  =\frac1s\mathbf r_k(\mathbf c\cdot p)+O(s^{-2})|p|.
  \label{eq:projected-LH-rank-one-expansion}
\end{equation}
Combining \eqref{eq:graph-coefficient-first-expansion}--
\eqref{eq:projected-LH-rank-one-expansion}, and applying
\begin{equation*}
  \dot{\mathbf n}
  =-\Pi_TDU(x(t))^{\mathsf T}\mathbf n=-\mathbf c,
\end{equation*}
we obtain
\begin{equation*}
  \Pi_T\dot p
  =-\Pi_TDU(x(t))p
   +\frac1s\left\{
     (\mathbf d-\mathbf c)(\mathbf r_k\cdot p)
     +2\mathbf r_k(\mathbf c\cdot p)
   \right\}
   +O(s^{-2})|p|.
\end{equation*}

Finally, applying $p=Q_\infty z_{\mathrm{tan}}$ we get the leading coefficient 
\[
  -Q_\infty^{\mathsf T}DU(x(t))Q_\infty
  -Q_\infty^{\mathsf T}\dot Q_\infty=B_\infty,
\]
while in terms of quantities of \eqref{eq:r-c-d-components},
\[
  (\mathbf d-\mathbf c)(\mathbf r_k\cdot p)
     +2\mathbf r_k(\mathbf c\cdot p)= (\mathsf d-\mathsf c)\mathsf r^{\mathsf T}
  +2\mathsf r\mathsf c^{\mathsf T}=D.
\]
Combining with the facts
\[Q_\infty^{\mathsf T}Q_\infty=\mathrm{Id}_2,
\qquad
Q_\infty Q_\infty^{\mathsf T}=\Pi_T\]
concludes \eqref{eq:first-corrected-tangent-amplitude}--
\eqref{eq:D-first-correction}. For large $|s|$, Taylor's formula together with compactness of the active band 
leads to the uniform remainder estimate
\eqref{eq:first-corrected-tangent-remainder}.
\end{proof}

\begin{lemma}\label{lem:post-limiting-Floquet-reduction}
On a fixed compact active band, let $z_{\mathrm{tan}}$ solve
\eqref{eq:first-corrected-tangent-amplitude} on an interval on which
$|s(t)|$ is sufficiently large.  
Under the substitution $z_{\mathrm{tan}}=Z_\infty z_{\mathrm F}$, we have
\begin{equation}
  \dot z_{\mathrm F}
  =\frac1{s(t)}K(t)z_{\mathrm F}
   +\widetilde R_2(t)z_{\mathrm F},
  \qquad
  K(t):=Z_\infty(t)^{-1}D(t)Z_\infty(t),
  \label{eq:post-limiting-Floquet-system}
\end{equation}
where $K$ is $T_0$-periodic in the cylindrical coordinate and
\begin{equation}
  \|\widetilde R_2(t)\|\le C|s(t)|^{-2}.
\end{equation}
\end{lemma}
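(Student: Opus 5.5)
The plan is a direct variation-of-constants computation. We substitute $z_{\mathrm{tan}}=Z_\infty z_{\mathrm F}$ into \eqref{eq:first-corrected-tangent-amplitude} and use the fact that $Z_\infty$ from \eqref{eq:limiting-fundamental-Z} is a fundamental matrix of the limiting system, i.e. $\dot Z_\infty=B_\infty Z_\infty$. Indeed, $B_\infty$ was defined in the proof of Lemma~\ref{lem:explicit-normal-limiting-cocycle} as $\dot Z_\infty Z_\infty^{-1}$. Differentiating gives
\[
  \dot Z_\infty z_{\mathrm F}+Z_\infty\dot z_{\mathrm F}
  =\Bigl(B_\infty+\frac1sD+R_2\Bigr)Z_\infty z_{\mathrm F}.
\]
The $B_\infty$ term cancels against $\dot Z_\infty z_{\mathrm F}$. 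Multiplying by $Z_\infty^{-1}$ then yields \eqref{eq:post-limiting-Floquet-system} with
\[
  K=Z_\infty^{-1}DZ_\infty,\qquad
  \widetilde R_2=Z_\infty^{-1}R_2Z_\infty .
\]

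For the remainder bound we need $Z_\infty$ and $Z_\infty^{-1}$ uniformly bounded along the trajectory. We have $\det Z_\infty=-V_\varphi/|U_{\mathrm m}|$, and the entries of $Z_\infty$ are $|U_{\mathrm m}|$, $1/|U_{\mathrm m}|$ and $V_\varphi$. On a fixed compact active band these are smooth functions of $(\sigma,I)$. The standing hypotheses $|U_{\mathrm m}|>0$ and $V_\varphi\neq0$ then give uniform positive lower bounds by compactness and continuity: we use $V_\varphi=L(I)/\rho$ with $L\neq0$, and $\rho|U_{\mathrm m}|=|\Omega_1||\nabla I|$ with $\Omega_1\neq0$ and $|\nabla I|>0$ on the active band. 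Hence $\|Z_\infty\|+\|Z_\infty^{-1}\|\le C$ uniformly, and \eqref{eq:first-corrected-tangent-remainder} gives $\|\widetilde R_2\|\le C|s|^{-2}$.

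Periodicity of $K$ follows because every ingredient is a function of $\sigma(t)=\sigma_0+\Omega_1(I_0)t$ alone when written in the moving cylindrical frame. This covers $Z_\infty$, and also $D$ through $\mathsf r,\mathsf c,\mathsf d$ in \eqref{eq:r-c-d-components}. The needed facts are already available: $[\eta_k]_{\mathrm{cyl}}$ is periodic by \eqref{eq:eta-k-cylindrical-periodic}, $DU$ is periodic by \eqref{eq:DU-cylindrical-exact}, and $\mathbf n$, $\Pi_T$, $\boldsymbol\tau$, $\mathbf e_\varphi$ are $\beta$-independent in cylindrical components. Consequently $K$ is $T_0$-periodic.

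The only delicate point is the frame bookkeeping. Both $z_{\mathrm{tan}}$ and $Z_\infty$ must be expressed in the same limiting frame $Q_\infty=(\boldsymbol\tau,\mathbf e_\varphi)$, so that $\dot Z_\infty=B_\infty Z_\infty$ holds literally, including the $-Q_\infty^{\mathsf T}\dot Q_\infty$ term. This is exactly how $B_\infty$ was produced in Lemma~\ref{lem:explicit-normal-limiting-cocycle}, so no new obstruction is expected.
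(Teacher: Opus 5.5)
Your proposal is correct and matches the paper's proof: substitute $z_{\mathrm{tan}}=Z_\infty z_{\mathrm F}$, cancel the leading term using $\dot Z_\infty=B_\infty Z_\infty$, set $\widetilde R_2=Z_\infty^{-1}R_2Z_\infty$, and conclude from uniform bounds on $Z_\infty^{\pm1}$ and the periodicity of $D$ and $Z_\infty$. The one addition is that you justify the uniform bound on $Z_\infty^{-1}$ through $\det Z_\infty=-V_\varphi/|U_{\mathrm m}|$ and explicit lower bounds on $|U_{\mathrm m}|$ and $|V_\varphi|$, which the paper simply asserts.
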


\begin{proof}
Applying $z_{\mathrm{tan}}=Z_\infty z_{\mathrm F}$ and $\dot Z_\infty=B_\infty Z_\infty$, straightforward computation shows
\[
  B_\infty Z_\infty z_{\mathrm F}+Z_\infty\dot z_{\mathrm F}
  =B_\infty Z_\infty z_{\mathrm F}
   +\frac1sDZ_\infty z_{\mathrm F}
   +R_2Z_\infty z_{\mathrm F},
\]
and it follows
\[
  \dot z_{\mathrm F}
  =\frac1sZ_\infty^{-1}DZ_\infty z_{\mathrm F}
   +Z_\infty^{-1}R_2Z_\infty z_{\mathrm F}
\]
which verifies \eqref{eq:post-limiting-Floquet-system} with
\(
  \widetilde R_2:=Z_\infty^{-1}R_2Z_\infty.
\)
Since $Z_\infty$ and $Z_\infty^{-1}$ are periodic and uniformly
bounded on the active band, we have
\[
  \|\widetilde R_2(t)\|
  \le \|Z_\infty(t)^{-1}\|\,\|R_2(t)\|\,\|Z_\infty(t)\|
  \le C'|s(t)|^{-2}.
\]
Finally, since both $D$ and $Z_\infty$ are $T_0$-periodic in the cylindrical coordinate, so is $K=Z_\infty^{-1}DZ_\infty$.
\end{proof}

Define the average
\begin{equation}
  \overline K:=\frac1{T_0}\int_0^{T_0}K(t)\,\dd t.
  \label{eq:Kbar-definition}
\end{equation}
We now find $\overline K$ explicitly in terms of the steady invariants.

\begin{lemma}\label{lem:explicit-averaged-first-correction-matrix}
Let $I_0$ lie on the selected active regular torus and assume that, along
this torus,
\begin{equation*}
  |\nabla I|>0,\qquad |U_{\mathrm m}|>0,\qquad V_\varphi\ne0.
\end{equation*}
The period average $\overline K$
defined in \eqref{eq:Kbar-definition} is
\begin{equation}
  \overline K=
  \begin{pmatrix}
    \overline K_{11}&\overline K_{12}\\
    \overline K_{21}&\lambda_k'(I_0)-\overline K_{11}
  \end{pmatrix},
  \label{eq:Kbar-explicit}
\end{equation}
where
\begin{align*}
  \overline K_{11}
  &=\frac{(k\cdot\Om(I_0))L_I-2mP_I}{L},
\\
  \overline K_{12}
  &=\frac1{T_0}\int_0^{T_0}
    \left(
      \frac{(k\cdot\Om(I_0))L_I-2mP_I}
           {L|U_{\mathrm m}|^2}
      -\frac{mL_I}{\rho^2|U_{\mathrm m}|^2}
      +\frac{2mV_\varphi n_\rho}
             {|\nabla I|\rho^2|U_{\mathrm m}|^2}
    \right)\dd t, \quad n_\rho:=\mathbf n\cdot\mathbf e_\rho,
\\
  \overline K_{21}
  &=(k\cdot\Om(I_0))
    \left(\frac12\partial_I(|U|^2)-P_I\right)
    -\frac{|U|^2}{L}
      \left((k\cdot\Om(I_0))L_I-2mP_I\right).
\end{align*}
In particular,
\begin{equation}
  \operatorname{tr}\overline K=\lambda_k'(I_0).
  \label{eq:Kbar-trace}
\end{equation}
\end{lemma}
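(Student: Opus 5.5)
The computation is direct. We evaluate $D=(\mathsf d-\mathsf c)\mathsf r^{\mathsf T}+2\mathsf r\mathsf c^{\mathsf T}$ in the frame $Q_\infty=(\boldsymbol\tau,\mathbf e_\varphi)$, conjugate it by the explicit $Z_\infty$ of \eqref{eq:limiting-fundamental-Z}, and average over one meridional period. The trace identity will be settled first.

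\emph{The trace.} Since conjugation preserves trace, $\operatorname{tr}K=\operatorname{tr}D=\mathsf r\cdot(\mathsf d-\mathsf c)+2\,\mathsf r\cdot\mathsf c=\mathsf r\cdot(\mathsf d+\mathsf c)=\mathbf r_k\cdot\Pi_T(DU+DU^{\mathsf T})\mathbf n$. To identify its average with $\lambda_k'(I_0)$, we use the exact covector dynamics \eqref{eq:LH-xi} with $\xi=s|\nabla I|\mathbf n+\eta_k$. Since $\nabla I$ itself is transported by $-DU^{\mathsf T}$ (it is $\mathsf E^{-\mathsf T}(0,0,1)$, and $I$ is invariant), the identity $\dot s=-\lambda_k'$ must be balanced through $\eta_k$. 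Concretely, differentiating $|\xi|^2$ and comparing the $O(s)$ terms gives
\[
\frac{\dd}{\dd t}\bigl(\mathbf n\cdot\eta_k/|\nabla I|\bigr)=\lambda_k'(I_0)-\mathbf r_k\cdot(\mathbf c+\mathbf d)
\]
up to the normalization of $\mathbf n$. The left-hand side is a derivative of a $T_0$-periodic function, so it averages to zero. Hence $\overline{\operatorname{tr}K}=\lambda_k'(I_0)$, which is \eqref{eq:Kbar-trace} and accounts for the $(2,2)$ entry in \eqref{eq:Kbar-explicit}. I would double-check the sign conventions here against $q_I=\ell-t\lambda_k'$.

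\emph{The entries.} Write $Z_\infty^{-1}=\frac{1}{-V_\varphi/|U_{\mathrm m}|}\begin{pmatrix}0&-1/|U_{\mathrm m}|\\-V_\varphi&|U_{\mathrm m}|\end{pmatrix}$. Then $K_{ij}$ are explicit bilinear expressions in $\mathsf r,\mathsf c,\mathsf d$ weighted by $|U_{\mathrm m}|^{\pm1}$ and $V_\varphi^{\pm1}$. Next we compute the three tangent vectors via steady invariants. Using $DU\,U=-\nabla P=-P_I\nabla I$ and $DU\,K=-V_\varphi\mathbf e_\rho+V_\rho\mathbf e_\varphi$, together with their transposes through $\nabla(U\cdot w)$ identities ($\nabla|U|^2/2$ and $\nabla L=L_I\nabla I$), we get $\mathsf c$ and $\mathsf d$ in the $(U,K)$ basis. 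The $U$-components are given by $\mathbf n\cdot DU\,U=-P_I|\nabla I|$ and $U\cdot DU\mathbf n=\tfrac12\partial_I|U|^2|\nabla I|$. The $K$-components are given by $K\cdot DU^{\mathsf T}\mathbf n$ and $\mathbf n\cdot DU\,K=-V_\varphi n_\rho$ together with the $L_I$ relation $\nabla L=DU^{\mathsf T}K+\dots$.

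For $\mathsf r$, we use $\mathbf r_k\cdot U=\eta_k\cdot U/|\nabla I|=(n\Omega_1+m\Omega_2)/|\nabla I|=k\cdot\Om/|\nabla I|$, since $\eta_k$ pairs with $e_\sigma,e_\beta$ to give $n,m$. Similarly $\mathbf r_k\cdot K=m/|\nabla I|$. The factors $|\nabla I|$ then cancel against those from $\mathsf c,\mathsf d$, and this explains why only $k\cdot\Om$, $m$, $L_I$, $P_I$ and $\partial_I|U|^2$ appear. With these, $K_{11}$ and $K_{21}$ turn out constant along the orbit, so averaging is trivial. $K_{12}$ retains the $\rho$-, $|U_{\mathrm m}|$- and $n_\rho$-dependent factors and stays under the integral.

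\emph{Main obstacle.} The main obstacle is the bookkeeping in $K_{12}$: converting $\mathbf e_\varphi$-components into $(U,K)$-components, and pinning down the term $2mV_\varphi n_\rho/(|\nabla I|\rho^2|U_{\mathrm m}|^2)$, which comes from the part of $DU$ that is not expressible by invariants. Here I would use $\rho|U_{\mathrm m}|=|\Omega_1||\nabla I|$ from Lemma~\ref{lem:explicit-normal-limiting-cocycle} to simplify, and verify the final formulas by checking the trace against the identity derived above.
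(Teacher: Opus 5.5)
Your proposal is correct and follows the paper's proof. You use the same ingredients: the pairings of $\mathbf r_k,\mathbf c,\mathbf d$ with $U$ and $K=\rho\mathbf e_\varphi$ via $DU\,U=-\nabla P$, $DU^{\mathsf T}U=\nabla(|U|^2/2)$, $L=L(I)$ and the explicit $DU\,\mathbf e_\varphi$; conjugation by the explicit $Z_\infty^{-1}$, which gives orbit-constant $K_{11},K_{21}$; and the $(2,2)$ entry fixed by averaging $\operatorname{tr}D=\lambda_k'(I_0)-\frac{\dd}{\dd t}(\mathbf n\cdot\eta_k/|\nabla I|)$.

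The differences are minor. You obtain that trace identity by matching powers of $s$ in $\frac{\dd}{\dd t}|\xi|^2$, which is legitimate because $\ell$ is free while $\nabla I(x(t))$ and $\eta_k(t)$ do not depend on it; the paper instead differentiates directly using $\dot\eta_k=-DU^{\mathsf T}\eta_k+\lambda_k'|\nabla I|\mathbf n$. Also, the $K$-pairing of $\mathbf d$ should read $K\cdot DU\,\mathbf n=|\nabla I|L_I-V_\varphi n_\rho$, not $K\cdot DU^{\mathsf T}\mathbf n$ (that is the pairing for $\mathbf c$).
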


\begin{proof}
We first compute $\mathsf r$ directly in the orthonormal tangent frame
$Q_\infty=(\boldsymbol\tau,\mathbf e_\varphi)$.
Since both $\boldsymbol\tau$ and $\mathbf e_\varphi$ are tangent to the invariant torus, we have 
\[\Pi_T \boldsymbol\tau=\boldsymbol\tau, \quad \Pi_T \mathbf e_\varphi=\mathbf e_\varphi.\]
Applying
\[
  \boldsymbol\tau
  =\frac{U-V_\varphi\mathbf e_\varphi}{|U_{\mathrm m}|},
  \qquad
  \eta_k\cdot U=k\cdot\Om(I_0),
  \qquad
  \eta_k\cdot\mathbf e_\varphi=\frac m\rho,
\]
we obtain
\begin{equation*}
  \mathsf r_1
  =\frac{k\cdot\Om(I_0)-mV_\varphi/\rho}
         {|\nabla I|\,|U_{\mathrm m}|},
  \qquad
  \mathsf r_2=\frac{m}{|\nabla I|\rho}.
\end{equation*}

We next compute $\mathsf c$ and $\mathsf d$.  It follows from the steady Euler equation and
the fact that $P=P(I)$ 
\begin{equation*}
  DU(x(t))U=-\nabla P=-|\nabla I|P_I\mathbf n,
  \qquad
  U\cdot DU(x(t))\mathbf n
  =\frac{|\nabla I|}{2}\partial_I(|U|^2).
\end{equation*}
Moreover, invoking \eqref{eq:DU-cylindrical-exact} we get
\begin{equation*}
  DU(x(t))\mathbf e_\varphi
  =-\frac{V_\varphi}{\rho}\mathbf e_\rho
   +\frac{V_\rho}{\rho}\mathbf e_\varphi,
  \qquad
  \mathbf n\cdot DU(x(t))\mathbf e_\varphi
  =-\frac{V_\varphi n_\rho}{\rho}.
\end{equation*}
It follows from $L=\rho V_\varphi=L(I)$ that
\[
  |\nabla I|L_I
  =\mathbf n\cdot\nabla(\rho V_\varphi)
  =V_\varphi n_\rho+\rho\,\mathbf e_\varphi\cdot DU(x(t))\mathbf n.
\]
Therefore
\begin{equation*}
  \mathbf e_\varphi\cdot DU(x(t))\mathbf n
  =\frac{|\nabla I|L_I}{\rho}
   -\frac{V_\varphi n_\rho}{\rho}.
\end{equation*}
Again since $\boldsymbol\tau$ and $\mathbf e_\varphi$ are tangent,
$Q_\infty^{\mathsf T}\Pi_T=Q_\infty^{\mathsf T}$.  Thus, we have
\begin{align*}
  \mathsf c_1
  &=\boldsymbol\tau\cdot DU(x(t))^{\mathsf T}\mathbf n
    =\frac{-|\nabla I|P_I+V_\varphi^2n_\rho/\rho}
           {|U_{\mathrm m}|},
\\
  \mathsf c_2
  &=\mathbf e_\varphi\cdot DU(x(t))^{\mathsf T}\mathbf n
    =-\frac{V_\varphi n_\rho}{\rho},
\\
  \mathsf d_1
  &=\boldsymbol\tau\cdot DU(x(t))\mathbf n
    =\frac{1}{|U_{\mathrm m}|}
      \left(
        \frac{|\nabla I|}{2}\partial_I(|U|^2)
        -\frac{|\nabla I|V_\varphi L_I}{\rho}
        +\frac{V_\varphi^2n_\rho}{\rho}
      \right),
\\
  \mathsf d_2
  &=\mathbf e_\varphi\cdot DU(x(t))\mathbf n
    =\frac{|\nabla I|L_I}{\rho}
     -\frac{V_\varphi n_\rho}{\rho},
\end{align*}
and 
\begin{equation*}
  \mathsf d-\mathsf c
  =
  \begin{pmatrix}
    \dfrac{|\nabla I|}{|U_{\mathrm m}|}
    \left(
      \dfrac12\partial_I(|U|^2)+P_I
      -\dfrac{V_\varphi L_I}{\rho}
    \right)\\[4mm]
    \dfrac{|\nabla I|L_I}{\rho}
  \end{pmatrix}.
\end{equation*}

Recall that
\begin{equation*}
  Z_\infty^{-1}=
  \begin{pmatrix}
    0&1/V_\varphi\\
    |U_{\mathrm m}|&-|U_{\mathrm m}|^2/V_\varphi
  \end{pmatrix},
  \qquad
  D=(\mathsf d-\mathsf c)\mathsf r^{\mathsf T}
    +2\mathsf r\mathsf c^{\mathsf T}.
\end{equation*}
Consequently, we deduce
\begin{align}
  D\binom{|U_{\mathrm m}|}{V_\varphi}
  &=
  \frac{k\cdot\Om(I_0)}{|\nabla I|}
    (\mathsf d-\mathsf c)
  -2|\nabla I|P_I\,\mathsf r,
  \label{eq:D-first-Z-column-direct}\\
  D\binom{1/|U_{\mathrm m}|}{0}
  &=\frac1{|U_{\mathrm m}|}
    \left(\mathsf r_1(\mathsf d-\mathsf c)
          +2\mathsf c_1\mathsf r\right),
  \label{eq:D-second-Z-column-direct}
\end{align}
where we used
\[
  \mathsf r^{\mathsf T}\binom{|U_{\mathrm m}|}{V_\varphi}
  =\frac{k\cdot\Om(I_0)}{|\nabla I|},
  \qquad
  \mathsf c^{\mathsf T}\binom{|U_{\mathrm m}|}{V_\varphi}
  =-|\nabla I|P_I.
\]

In view of 
\eqref{eq:D-first-Z-column-direct} we get
\begin{align}
  K_{11}
  &=\frac1{V_\varphi}
    \left[
      \frac{k\cdot\Om(I_0)}{|\nabla I|}
      \frac{|\nabla I|L_I}{\rho}
      -2|\nabla I|P_I
       \frac{m}{|\nabla I|\rho}
    \right]\notag\\
  &=\frac{(k\cdot\Om(I_0))L_I-2mP_I}{L},
  \label{eq:K11-explicit}
\end{align}
while it follows from \eqref{eq:D-second-Z-column-direct} that
\begin{align*}
  K_{12}
  &=\frac1{|U_{\mathrm m}|^2}
    \left[
      \frac{(k\cdot\Om(I_0))L_I}{\rho V_\varphi}
      -\frac{mL_I}{\rho^2}
      -\frac{2mP_I}{\rho V_\varphi}
      +\frac{2mV_\varphi n_\rho}{|\nabla I|\rho^2}
    \right]\notag\\
  &=\frac{(k\cdot\Om(I_0))L_I-2mP_I}
          {L|U_{\mathrm m}|^2}
    -\frac{mL_I}{\rho^2|U_{\mathrm m}|^2}
    +\frac{2mV_\varphi n_\rho}
           {|\nabla I|\rho^2|U_{\mathrm m}|^2}.
\end{align*}
Similarly, applying
\begin{align*}
  \begin{pmatrix}
    |U_{\mathrm m}|&-|U_{\mathrm m}|^2/V_\varphi
  \end{pmatrix}\mathsf r
  &=\frac1{|\nabla I|}
    \left(k\cdot\Om(I_0)-\frac{m|U|^2}{L}\right),
\\
  \begin{pmatrix}
    |U_{\mathrm m}|&-|U_{\mathrm m}|^2/V_\varphi
  \end{pmatrix}
  (\mathsf d-\mathsf c)
  &=|\nabla I|
    \left(
      \frac12\partial_I(|U|^2)+P_I
      -\frac{|U|^2L_I}{L}
    \right)
\end{align*}
to \eqref{eq:D-first-Z-column-direct} yields
\begin{align}
  K_{21}
  &=(k\cdot\Om(I_0))
    \left(
      \frac12\partial_I(|U|^2)+P_I
      -\frac{|U|^2L_I}{L}
    \right)\notag\\
  &\quad
    -2P_I
    \left(k\cdot\Om(I_0)-\frac{m|U|^2}{L}\right)\notag\\
  &=(k\cdot\Om(I_0))
    \left(\frac12\partial_I(|U|^2)-P_I\right)
    -\frac{|U|^2}{L}
      \left((k\cdot\Om(I_0))L_I-2mP_I\right).
  \label{eq:K21-explicit}
\end{align}
Averaging \eqref{eq:K11-explicit}--\eqref{eq:K21-explicit} we obtain $\overline K_{11}$, $\overline K_{12}$ and $\overline K_{21}$
in \eqref{eq:Kbar-explicit}.

It remains to determine $K_{22}$. Since $K=Z_\infty^{-1}DZ_\infty$ and
$D$ is given by \eqref{eq:D-first-correction}, we have
\begin{equation*}
  \operatorname{tr}K=\operatorname{tr}D
  =(\mathsf d-\mathsf c)\cdot\mathsf r
    +2\mathsf c\cdot\mathsf r
  =(\mathbf c+\mathbf d)\cdot\mathbf r_k.
\end{equation*}
On the other hand, since 
\begin{equation*}
  \frac{\dd}{\dd t}\nabla I(x(t))
  =-DU(x(t))^{\mathsf T}\nabla I(x(t)),
\end{equation*}
together with
$\xi=s\nabla I+\eta_k$, $\dot s=-\lambda_k'(I_0)$, and
\eqref{eq:LH-xi}, we conclude
\begin{align*}
  \dot\eta_k
  &=-DU(x(t))^{\mathsf T}\eta_k
    +\lambda_k'(I_0)|\nabla I|\,\mathbf n,\\
  \frac{\dd}{\dd t}|\nabla I|
  &=-|\nabla I|\,\mathbf n\cdot DU(x(t))\mathbf n,
  \qquad
  \dot{\mathbf n}=-\mathbf c.
\end{align*}
We then compute
\begin{align*}
  \frac{\dd}{\dd t}
    \left(\frac{\mathbf n\cdot\eta_k}{|\nabla I|}\right)
  &=\lambda_k'(I_0)
    -\frac{\mathbf c\cdot\eta_k
       +(DU(x(t))\mathbf n)\cdot\eta_k}{|\nabla I|}
    +\frac{\mathbf n\cdot\eta_k}{|\nabla I|}
       \bigl(\mathbf n\cdot DU(x(t))\mathbf n\bigr)\\
  &=\lambda_k'(I_0)-(\mathbf c+\mathbf d)\cdot\mathbf r_k.
\end{align*}
Applying
$DU(x(t))\mathbf n
=\mathbf d+(\mathbf n\cdot DU(x(t))\mathbf n)\mathbf n$ and the fact that $\mathbf c,\mathbf d$
are tangent and $\Pi_T\eta_k=|\nabla I|\mathbf r_k$, we obtain
\begin{equation}
  \operatorname{tr}K(t)=\operatorname{tr}D(t)
  =\lambda_k'(I_0)
   -\frac{\dd}{\dd t}
    \left(\frac{\mathbf n\cdot\eta_k}{|\nabla I|}\right).
  \label{eq:direct-first-correction-trace}
\end{equation}
The remaining entry is therefore
\begin{equation}
  K_{22}(t)
  =\lambda_k'(I_0)
   -\frac{\dd}{\dd t}
    \left(\frac{\mathbf n\cdot\eta_k}{|\nabla I|}\right)
   -K_{11}.
  \label{eq:K22-pointwise-direct}
\end{equation}
Note $(\mathbf n\cdot\eta_k)/|\nabla I|$ is $T_0$-periodic.
After averaging \eqref{eq:direct-first-correction-trace} over one period, we infer
\[
  \operatorname{tr}\overline K
  =\frac1{T_0}\int_0^{T_0}\operatorname{tr}D(t)\,\dd t
  =\lambda_k'(I_0),
\]
which implies \eqref{eq:Kbar-trace} and hence \eqref{eq:Kbar-explicit} follows as well.  
\end{proof}

In particular,
\begin{equation}
  \det\overline K
  =\overline K_{11}\bigl(\lambda_k'(I_0)-\overline K_{11}\bigr)
   -\overline K_{12}\overline K_{21}.
  \label{eq:det-Kbar-general}
\end{equation}
For the trace-free normalization in
\eqref{eq:exact-trace-free-amplitude-system}, the corresponding averaged
matrix is
\begin{equation*}
  \overline K_{\rm tf}
  =\overline K-\frac{\lambda_k'(I_0)}2\mathrm{Id},
  \qquad
  \det\overline K_{\rm tf}
  =\det\overline K-\frac{\bigl(\lambda_k'(I_0)\bigr)^2}{4}.
\end{equation*}

For the axisymmetric mode $k=(n,0)$, \eqref{eq:det-Kbar-general} reduces to
\begin{equation}
  \begin{aligned}
    \det\overline K
    ={}&n^2\Omega_1^2\frac{L_I}{L}\Biggl[
      \frac{\Omega_1'}{\Omega_1}-\frac{L_I}{L}
    -\left\langle\frac1{|U_{\mathrm m}|^2}\right\rangle_{T_0}
      \left(
        \frac12\partial_I(|U|^2)-P_I
        -|U|^2\frac{L_I}{L}
      \right)
    \Biggr].
  \end{aligned}
  \label{eq:det-Kbar-axisymmetric-reduction}
\end{equation}
Here $\langle\cdot\rangle_{T_0}$ denotes the time average over one
meridional period.

\begin{proposition}\label{prop:negative-averaged-determinant}
Fix $k=(n,0)$ with $n\ne0$ and a plateau height $\chi_*>0$.  Consider the
family of localized Gavrilov flows obtained from
\eqref{eq:explicit-pressure-cutoff} as $\iota\downarrow0$.  Uniformly for
$I\in[\iota,2\iota]$,
\begin{equation}
  \det\overline K
  =-\frac{3n^2\chi_*^2}{2I}+O(n^2\chi_*^2).
  \label{eq:det-Kbar-small-action}
\end{equation}
In particular, $\det\overline K<0$ throughout the band $I\in[\iota,2\iota]$ for all sufficiently small $\iota$.
\end{proposition}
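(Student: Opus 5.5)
The plan is to evaluate every ingredient of the axisymmetric reduction \eqref{eq:det-Kbar-axisymmetric-reduction} on the plateau. For $I\in[\iota,2\iota]\subset[3\iota/4,9\iota/4]$ we have $\chi(\Kcal(I))=\chi_*$, so the exact identities give
\[
|U|^2=3\chi_*^2\Kcal(I),\qquad P_I=\chi_*^2\Kcal'(I),\qquad \Omega_1=\chi_*\Kcal'(I).
\]
From $\Kcal=I+O(I^3)$ this yields
\[
\tfrac12\partial_I|U|^2-P_I=\tfrac12\chi_*^2+O(I^2),\qquad \frac{\Omega_1'}{\Omega_1}=\frac{\Kcal''}{\Kcal'}=O(I),
\]
using Lemma~\ref{lem:uniform-small-action-convexity} and \eqref{eq:K-second-derivative-remainder}. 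The prefactor is $n^2\Omega_1^2L_I/L=n^2\chi_*^2(1+O(I^2))\,L_I/L$.

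The remaining inputs are the angular momentum $L(I)$ and the average $\langle|U_{\mathrm m}|^{-2}\rangle_{T_0}$. Since $L$ is independent of $\sigma$, I will compute it from
\[
L=\rho V_\varphi=\rho^2(\Omega_2+\Omega_1\gamma_\sigma),
\]
evaluated at any convenient $\sigma$ or averaged in $\sigma$. Alternatively I will use Baldi's relation $Q=\sqrt{\mathcal H(\mathfrak c)}/\rho^2$, which identifies $L$ with a multiple of $\sqrt{\mathcal H}$. Either route, with \eqref{eq:Omega2}, \eqref{eq:Baldi-expansion-summary} and Proposition~\ref{prop:first-geometric-expansion}, should give
\[
L=\chi_*\sqrt I\,\bigl(1+a_1\sqrt I+a_2I+O(I^{3/2})\bigr).
\]
At leading order this gives $L_I/L=\frac1{2I}+\frac{a_1}{2\sqrt I}+O(1)$.

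Because $\sigma(t)$ is linear in $t$, the time average over $T_0$ equals the $\sigma$-average. I will expand
\[
|U_{\mathrm m}|^{-2}=\bigl(3\chi_*^2\Kcal-L^2\rho^{-2}\bigr)^{-1}
\]
using $\rho=1+r\sin\sigma+r^2A+O(r^3)$ from \eqref{eq:rho-first-expansion}. At leading order $|U_{\mathrm m}|^2=2\chi_*^2I$, consistent with \eqref{eq:DU-cylindrical-leading}. Terms odd in $\sigma$ average to zero, so one obtains $\langle|U_{\mathrm m}|^{-2}\rangle=\frac{1}{2\chi_*^2I}(1+b_1\sqrt I+b_2 I+\dots)$. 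Here $b_1$ comes only from $a_1$, and $b_2$ comes from $a_2$ and from $\langle\sin^2\sigma\rangle$, $\langle A\rangle$.

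Inserting these expansions into the bracket of \eqref{eq:det-Kbar-axisymmetric-reduction} shows the structure of the computation. The $O(I^{-1})$ contributions cancel:
\[
-\frac1{2I}-\frac{1}{2\chi_*^2I}\Bigl(\tfrac12\chi_*^2-\tfrac32\chi_*^2\Bigr)=0.
\]
The expected answer then requires the $O(I^{-1/2})$ terms to cancel as well, leaving the bracket equal to $-3+O(\sqrt I)$. Multiplying by the prefactor $n^2\chi_*^2/(2I)$ gives \eqref{eq:det-Kbar-small-action}. The uniformity on $[\iota,2\iota]$ follows because all remainders come from convergent expansions in $\sqrt I$ on $[0,5\iota/2]$. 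Negativity for small $\iota$ is then immediate.

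The main obstacle is this second-order bookkeeping. The leading singular terms cancel exactly, so the sign is decided by next-to-next-order coefficients of $L$ and of the averaged $\rho^{-2}$. These require the full $r^2$ data $A,B,C$ of Proposition~\ref{prop:first-geometric-expansion} and Baldi's $\sqrt{\mathcal H}$ expansion to one order beyond what is displayed. I would first settle $a_1$ by exploiting the exact $\sigma$-independence of $L$: evaluate $\rho^2(\Omega_2+\Omega_1\gamma_\sigma)$ at $\sigma=0$ and at $\sigma=\pi$ and compare, as a consistency check on the expansion. I would then verify that the $I^{-1/2}$ terms cancel identically, since this is equivalent to the cancellation $b_1=a_1$-type relation. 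Only after that would I extract the $O(1)$ constant, which should be $-3$.
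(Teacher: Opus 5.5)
The gap is in the final step. You get the bracket in \eqref{eq:det-Kbar-axisymmetric-reduction} as $-3+O(\sqrt I)$ and multiply by $n^2\Omega_1^2L_I/L=\tfrac{n^2\chi_*^2}{2I}(1+O(I))$. That gives $\det\overline K=-\tfrac{3n^2\chi_*^2}{2I}+O(n^2\chi_*^2I^{-1/2})$, which is weaker than the $O(n^2\chi_*^2)$ remainder in \eqref{eq:det-Kbar-small-action}.

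To reach the stated error you must show that the $I^{1/2}$ coefficient of the bracket vanishes. That coefficient cannot be computed from the data you plan to use. It enters through $\langle|U_{\mathrm m}|^{-2}\rangle_{T_0}$ at relative order $I^{3/2}$, which involves the $\sigma$-average of the $r^3$ coefficient of $\rho$; Proposition~\ref{prop:first-geometric-expansion} leaves that coefficient as $O(r^3)$. If you take $L$ from $\rho^2(\Omega_2+\Omega_1\gamma_\sigma)$ rather than from $\sqrt{\mathcal H(\mathfrak c)}$, you would also need higher-order terms of $\gamma$.

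The missing idea is a parity argument. Baldi's symmetry $w(-\vartheta,-\mu)=w(\vartheta,\mu)$, recorded in Appendix~\ref{app:Baldi-formulas}, makes $\rho$ invariant under $(\sigma,r)\mapsto(-\sigma,-r)$. Hence the $r^j$ coefficient of $\rho$ has parity $(-1)^j$ in $\sigma$, as $\sin\sigma$ and $A$ do, and every $\sigma$-average you form is a power series in $r^2=2I$. Combined with $L=\tfrac{\chi_*}{4}\sqrt{\mathcal H(\mathfrak c)}$, which makes $L/\sqrt I$ analytic in $I$, all half-integer powers drop out and the bracket is $-3+O(I)$. The paper's formulation through $\mathcal T,\mathcal H,\mathcal M$ as functions of $\mathfrak c$ has this built in. Note that the negativity claim already follows from your weaker remainder.

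Apart from this, your route is sound and differs from the paper's. You average directly in the straightened angle using $\rho(\sigma,I)$ from Proposition~\ref{prop:first-geometric-expansion}. The paper instead rewrites everything through the level parameter $\mathfrak c$ and $\vartheta$-quadratures, which needs the $\vartheta$-expansions of $\partial_{\mathfrak c}\gamma^{\rm mer}_{\mathfrak c}$ and $\rho_{\mathfrak c}^{-2}$.

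Your bookkeeping closes with the displayed data:
\begin{itemize}
\item $a_1=0$ and $a_2=-\tfrac{21}{4}$. These come either from $\sqrt{\mathcal H}$ or from the $\sigma$-average of $\rho^2(\Omega_2+\Omega_1\gamma_\sigma)$ with $\mathcal R=1+\tfrac74 I$. Evaluating at $\sigma=0$ or $\pi$ instead would require the unknown $r^3$ coefficient of $\gamma$.
\item With $\langle\sin^2\sigma\rangle=\tfrac12$ and $\langle A\rangle=-1$ you get $\langle|U_{\mathrm m}|^{-2}\rangle_{T_0}=\tfrac{1}{2\chi_*^2I}(1-\tfrac34 I+\cdots)$, matching $\tfrac{16}{\chi_*^2}\mathcal M(\mathfrak c)$.
\item $\tfrac12\partial_I|U|^2-P_I-|U|^2L_I/L=\chi_*^2(-1+\tfrac{63}{4}I+\cdots)$.
\item The bracket's constant term is $\tfrac{21}{4}-\tfrac{33}{4}=-3$, as you predicted.
\end{itemize}
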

\begin{proof}
Recall from Appendix~\ref{app:Baldi-formulas} that
$\mathfrak c=4\Kcal(I)$ and
$\mathfrak c_I=\dd\mathfrak c/\dd I$, and 
$\gamma^{\rm mer}_{\mathfrak c}(\vartheta)$ is Baldi's meridional level
graph.  Denote
\begin{align}
  \rho_{\mathfrak c}(\vartheta)
  &:=1+\sqrt{2\gamma^{\rm mer}_{\mathfrak c}(\vartheta)}
    \sin\vartheta,
  \notag\\
  \mathcal T(\mathfrak c)
  &:=\int_0^{2\pi}\partial_{\mathfrak c}
    \gamma^{\rm mer}_{\mathfrak c}(\vartheta)\,\dd\vartheta,
  \notag\\
  \mathcal M(\mathfrak c)
  &:=\frac1{\mathcal T(\mathfrak c)}
  \int_0^{2\pi}
    \frac{\partial_{\mathfrak c}
      \gamma^{\rm mer}_{\mathfrak c}(\vartheta)}
    {12\mathfrak c-
      \mathcal H(\mathfrak c)/\rho_{\mathfrak c}(\vartheta)^2}
    \,\dd\vartheta.
  \label{eq:M-level-quadrature}
\end{align}
The angle-straightening formula
\eqref{eq:Baldi-appendix-angle-straightening} implies
\begin{equation}\label{measure}
  \frac{\dd\sigma}{2\pi}
  =\frac{\partial_{\mathfrak c}
    \gamma^{\rm mer}_{\mathfrak c}(\vartheta)}
    {\mathcal T(\mathfrak c)}\,\dd\vartheta.
\end{equation}
Along the fixed torus, $\dot\sigma=\Omega_1(I)$ and $T_0=2\pi/\Omega_1(I)$; hence $dt/T_0=d\sigma/(2\pi)$. 

On the cutoff plateau, Gavrilov's first integrals give
\begin{equation*}
  \Omega_1=\frac{\chi_*}{4}\mathfrak c_I,
  \qquad
  L=\frac{\chi_*}{4}\sqrt{\mathcal H(\mathfrak c)},
  \qquad
  |U|^2=\frac{3\chi_*^2}{4}\mathfrak c,
  \qquad
  P_I=\frac{\chi_*^2}{4}\mathfrak c_I,
\end{equation*}
and
\begin{equation*}
  |U_{\mathrm m}|^2=\frac{\chi_*^2}{16}
  \left(12\mathfrak c-
    \frac{\mathcal H(\mathfrak c)}
         {\rho_{\mathfrak c}(\vartheta)^2}\right),
  \qquad
  \left\langle\frac1{|U_{\mathrm m}|^2}\right\rangle_{T_0}
  =\frac{16}{\chi_*^2}\mathcal M(\mathfrak c),
\end{equation*}
where the last equality follows from \eqref{measure}.  Since $\chi_*$ is constant on the plateau, we deduce from \eqref{eq:Baldi-appendix-action-period-identity} that
\begin{equation*}
  \frac{\Omega_1'}{\Omega_1}
  =\frac{\dd}{\dd I}\log\mathfrak c_I
  =-\mathfrak c_I
    \frac{\mathcal T_{\mathfrak c}(\mathfrak c)}
         {\mathcal T(\mathfrak c)},
  \qquad
  \frac{L_I}{L}
  =\mathfrak c_I
    \frac{\mathcal H_{\mathfrak c}(\mathfrak c)}
         {2\mathcal H(\mathfrak c)}.
\end{equation*}
On the other hand we have
\begin{equation}
  \frac12\partial_I(|U|^2)-P_I-|U|^2\frac{L_I}{L}
  =\frac{\chi_*^2\mathfrak c_I}{8}
  \left(1-3\mathfrak c
    \frac{\mathcal H_{\mathfrak c}(\mathfrak c)}
         {\mathcal H(\mathfrak c)}\right).
  \label{eq:Bernoulli-combination-in-level}
\end{equation}
Consequently, it follows from \eqref{eq:det-Kbar-axisymmetric-reduction} that
\begin{equation}
  \det\overline K
  =\frac{n^2\chi_*^2\mathfrak c_I^4}{16}
   \frac{\mathcal H_{\mathfrak c}(\mathfrak c)}
        {2\mathcal H(\mathfrak c)}\,
   \mathcal G(\mathfrak c),
  \label{eq:det-Kbar-exact-quadrature}
\end{equation}
with
\begin{equation}
  \begin{aligned}
    \mathcal G(\mathfrak c):={}&
    -\frac{\mathcal T_{\mathfrak c}(\mathfrak c)}
           {\mathcal T(\mathfrak c)}
    -\frac{\mathcal H_{\mathfrak c}(\mathfrak c)}
           {2\mathcal H(\mathfrak c)}
    -2\left(1-3\mathfrak c
      \frac{\mathcal H_{\mathfrak c}(\mathfrak c)}
           {\mathcal H(\mathfrak c)}\right)
      \mathcal M(\mathfrak c).
  \end{aligned}
  \label{eq:G-level-exact}
\end{equation}

The level-coefficient expansion
\eqref{eq:Baldi-appendix-level-coefficients} and the physical-coordinate
reconstruction in Appendix~\ref{app:Baldi-formulas} provide the following higher-order
terms:
\begin{align*}
  \partial_{\mathfrak c}
    \gamma^{\rm mer}_{\mathfrak c}(\vartheta)
  &=\frac14+\frac32\mathcal Q_3^{\rm mer}(\vartheta)
      \sqrt{\mathfrak c}
    +2\mathcal Q_4^{\rm mer}(\vartheta)\mathfrak c
    +O(\mathfrak c^{3/2}),
\\
  \rho_{\mathfrak c}^{-2}
  &=1-\sqrt2\sin\vartheta\sqrt{\mathfrak c}
    +\left(1-\frac98\cos2\vartheta
      +\frac18\cos4\vartheta\right)\mathfrak c
      +O(\mathfrak c^{3/2}),
\end{align*}
where
\begin{equation*}
  \mathcal Q_3^{\rm mer}(\vartheta)
  =-\frac{2\sin\vartheta-\sin3\vartheta}{8\sqrt2},
  \qquad
  \frac1{2\pi}\int_0^{2\pi}
    \mathcal Q_4^{\rm mer}(\vartheta)\,\dd\vartheta=0.
\end{equation*}
Applying the terms above, the denominator in \eqref{eq:M-level-quadrature} has the expansion
\begin{align*}
  12\mathfrak c
  -\mathcal H(\mathfrak c)\rho_{\mathfrak c}^{-2}
  ={}&8\mathfrak c
  +4\sqrt2\sin\vartheta\,\mathfrak c^{3/2}\\
  &+\left(
      \frac{13}{2}+\frac92\cos2\vartheta
      -\frac12\cos4\vartheta
    \right)\mathfrak c^2
  +O(\mathfrak c^{5/2}),
\end{align*}
and hence
\begin{align*}
  &\frac{1}{12\mathfrak c
    -\mathcal H(\mathfrak c)\rho_{\mathfrak c}^{-2}}
  =\frac1{8\mathfrak c}\Biggl[
    1-\frac{\sqrt2}{2}\sin\vartheta\sqrt{\mathfrak c}\\
  &\hspace{42mm}
    +\left\{
      \frac12\sin^2\vartheta-\frac{13}{16}
      -\frac9{16}\cos2\vartheta
      +\frac1{16}\cos4\vartheta
    \right\}\mathfrak c
    +O(\mathfrak c^{3/2})
  \Biggr].
\end{align*}
Straightforward computation shows 
\begin{equation*}
  \frac1{2\pi}\int_0^{2\pi}
  \frac{\partial_{\mathfrak c}
    \gamma^{\rm mer}_{\mathfrak c}(\vartheta)}
  {12\mathfrak c-
    \mathcal H(\mathfrak c)/\rho_{\mathfrak c}(\vartheta)^2}
  \,\dd\vartheta
  =\frac1{32\mathfrak c}-\frac3{512}+O(\mathfrak c).
\end{equation*}
Since
$\mathcal T(\mathfrak c)=\frac\pi2+O(\mathfrak c^2)$, substitution in
\eqref{eq:M-level-quadrature} yields
\begin{equation}
  \mathcal M(\mathfrak c)
  =\frac1{8\mathfrak c}-\frac3{128}+O(\mathfrak c).
  \label{eq:M-level-sharp-expansion}
\end{equation}
Inserting \eqref{eq:Baldi-appendix-H-expansion},
\eqref{eq:Baldi-appendix-T-expansion}, and
\eqref{eq:M-level-sharp-expansion} into \eqref{eq:G-level-exact}, we have
\begin{equation}\notag
  \frac{\mathcal H_{\mathfrak c}}{\mathcal H}
  =\frac1{\mathfrak c}-\frac{21}{8}+O(\mathfrak c), \quad
  \frac{\mathcal T_{\mathfrak c}}{\mathcal T}=O(\mathfrak c), \quad
  1-3\mathfrak c\frac{\mathcal H_{\mathfrak c}}{\mathcal H}
  =-2+\frac{63}{8}\mathfrak c+O(\mathfrak c^2),
\end{equation}
and 
\begin{align*}
  -2\left(1-3\mathfrak c
    \frac{\mathcal H_{\mathfrak c}}{\mathcal H}\right)
    \mathcal M(\mathfrak c)
  &=-2\left(-2+\frac{63}{8}\mathfrak c+O(\mathfrak c^2)\right)
    \left(\frac1{8\mathfrak c}-\frac3{128}+O(\mathfrak c)\right)\\
  &=\frac1{2\mathfrak c}
    -\frac3{32}-\frac{63}{32}+O(\mathfrak c)\\
  &=\frac1{2\mathfrak c}-\frac{33}{16}+O(\mathfrak c).
\end{align*}
Combining the three terms in \eqref{eq:G-level-exact} yields
\begin{equation}
  \mathcal G(\mathfrak c)
  =\left(-\frac1{2\mathfrak c}+\frac{21}{16}\right)
    +\left(\frac1{2\mathfrak c}-\frac{33}{16}\right)
    +O(\mathfrak c)
  =-\frac34+O(\mathfrak c).
  \label{eq:G-level-sharp-expansion}
\end{equation}
Finally, the action normalization
\eqref{eq:Baldi-appendix-action-normalization} gives
$\mathfrak c_I=4+O(\mathfrak c^2)$ and
$\mathfrak c=4I+O(I^3)$.  Equation
\eqref{eq:det-Kbar-exact-quadrature} therefore implies
\begin{equation*}
  \det\overline K
  =-\frac{6n^2\chi_*^2}{\mathfrak c}+O(n^2\chi_*^2)
  =-\frac{3n^2\chi_*^2}{2I}+O(n^2\chi_*^2).
\end{equation*}
This completes the proof of the proposition.
\end{proof}

\subsection{The critical torus with Floquet hyperbolicity and exponential amplitude growth}
\label{subsubsec:critical-torus-exponential-cocycle}

While the mechanism above uses $\Omega _1'\ne0$, we proceed to explore the mechanism at a critical torus where $\Omega _1'=0$. For every active action $I$ and every axisymmetric mode
$k=(n,0)$, let $D(t;I)$ be the matrix in
\eqref{eq:D-first-correction}, computed along the orbit on the torus labeled
by $I$, and let $Z_\infty(t;I)$ be the corresponding periodic fundamental
matrix from \eqref{eq:limiting-fundamental-Z} and 
\begin{equation*}
  K(t;I):=Z_\infty(t;I)^{-1}D(t;I)Z_\infty(t;I).
\end{equation*}
The matrix $K(t;I)$ is periodic in the cylindrical coordinate and smooth in term of $I$.
As before, define
\begin{equation}
  \overline K(I)
  :=\frac1{T_0(I)}\int_0^{T_0(I)}K(t;I)\,\dd t.
  \label{eq:normal-Kbar-definition}
\end{equation}
This is exactly the matrix $\overline K$ in \eqref{eq:Kbar-definition}, regardless of
whether $\lambda_k'(I)=0$, and
Lemma~\ref{lem:explicit-averaged-first-correction-matrix} applies on each
active regular torus.  

\begin{proposition}[A hyperbolic averaged matrix on a critical torus]
\label{prop:critical-torus-negative-determinant}
For all sufficiently small $\iota$, there is
\begin{equation}
  I_{\rm c}\in(9\iota/4,5\iota/2)
  \label{eq:critical-torus-location}
\end{equation}
such that
\begin{equation}
  \Omega _1'(I_{\rm c})=0,
  \qquad \Omega _1(I_{\rm c})>0.
  \label{eq:critical-torus-zero-twist}
\end{equation}
For every nonzero axisymmetric mode $k=(n,0)$, the matrix
$\overline K$ in \eqref{eq:normal-Kbar-definition}, evaluated on this torus,
satisfies
\begin{equation}
  \operatorname{tr}\overline K(I_{\rm c})=0
  \label{eq:critical-torus-Kbar-trace}
\end{equation}
and
\begin{equation}
  \det\overline K(I_{\rm c})
  =-\frac{6n^2\chi_\iota(\Kcal(I_{\rm c}))^2}
          {\mathfrak c(I_{\rm c})}
   +O\!\left(n^2\chi_\iota(\Kcal(I_{\rm c}))^2\right)<0.
  \label{eq:critical-torus-Kbar-determinant}
\end{equation}
The implicit constant is independent of the value of the positive cutoff
height $\chi_\iota(\Kcal(I_{\rm c}))$.
\end{proposition}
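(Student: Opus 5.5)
The plan has three parts. First locate $I_{\rm c}$ by an intermediate value argument on the outer ramp of the cutoff. Then the trace identity follows from Lemma~\ref{lem:explicit-averaged-first-correction-matrix}. Finally the determinant comes from redoing the small-action computation of Proposition~\ref{prop:negative-averaged-determinant} with the cutoff height frozen at its value on the critical torus.

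\emph{Existence of $I_{\rm c}$.} Write $h(I):=\chi_\iota(\Kcal(I))$, so that $\Omega_1=h\Kcal'$. On $[3\iota/4,9\iota/4]$ we have $h\equiv\chi_*$. Hence by Lemma~\ref{lem:uniform-small-action-convexity},
\[
\Omega_1'(9\iota/4)=\chi_*\Kcal''(9\iota/4)\ge\chi_*\tfrac{3195}{1024}\tfrac{9\iota}{4}>0 .
\]
At the outer endpoint $I=5\iota/2$ we have $\Omega_1=0$. On the other hand $\Omega_1(9\iota/4)>0$, so $\Omega_1$ attains its maximum over $[9\iota/4,5\iota/2]$ at an interior point $I_{\rm c}$. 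That maximum is not at $9\iota/4$, because $\Omega_1'(9\iota/4)>0$, and not at $5\iota/2$, because $\Omega_1$ vanishes there. Therefore $\Omega_1'(I_{\rm c})=0$ and $\Omega_1(I_{\rm c})\ge\Omega_1(9\iota/4)>0$. In particular $h(I_{\rm c})>0$, so the torus is active.

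For $\iota$ small the torus is also regular. It lies in Baldi's chart with $|\nabla I|>0$. Moreover $|U_{\rm m}|>0$ and $V_\varphi\neq0$, since $|U_{\rm m}|^2\sim 8h^2\mathfrak c/16$ and $L\sim h\sqrt{\mathcal H}/4\neq0$. Then \eqref{eq:Kbar-trace} with $m=0$ gives $\operatorname{tr}\overline K(I_{\rm c})=\lambda_k'(I_{\rm c})=n\Omega_1'(I_{\rm c})=0$.

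\emph{Determinant.} The reduction \eqref{eq:det-Kbar-axisymmetric-reduction} still holds, since it was derived pointwise in $I$ from Lemma~\ref{lem:explicit-averaged-first-correction-matrix}. At $I_{\rm c}$ the term $\Omega_1'/\Omega_1$ vanishes, and this simplification is the point of the critical torus. The remaining ingredients need $I$-derivatives of the invariants off the plateau, namely
\[
L=\tfrac{h}{4}\sqrt{\mathcal H},\qquad |U|^2=\tfrac34 h^2\mathfrak c,\qquad P_I=\tfrac{h^2}{4}\mathfrak c_I .
\]
These relations hold because $U=\chi(P_0)U_0$ and $\nabla P=\chi^2\nabla P_0$. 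Differentiating produces extra factors $h'/h$. The key observation is that the critical condition controls them:
\[
0=\Omega_1'/\Omega_1=h'/h+\mathfrak c_{II}/\mathfrak c_I
\quad\Longrightarrow\quad
h'/h=-\mathfrak c_{II}/\mathfrak c_I=O(\mathfrak c)\ \text{(bounded)}.
\]
Also $\mathfrak c_{II}/\mathfrak c_I=-\mathfrak c_I\mathcal T_{\mathfrak c}/\mathcal T=O(\mathfrak c)$.

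So I would recompute each bracket with $h$ in place of $\chi_*$. The results are
\[
L_I/L=\mathfrak c_I\mathcal H_{\mathfrak c}/(2\mathcal H)+h'/h,
\]
\[
\tfrac12\partial_I|U|^2-P_I-|U|^2L_I/L=\tfrac{h^2\mathfrak c_I}{8}\bigl(1-3\mathfrak c\mathcal H_{\mathfrak c}/\mathcal H\bigr)+O(h^2\mathfrak c),
\]
where the $h'$ contributions cancel to leading order or are $O(h^2\mathfrak c)$. In addition $\langle|U_{\rm m}|^{-2}\rangle=16\mathcal M/h^2$. Substituting, the prefactor $n^2\Omega_1^2L_I/L\approx n^2h^2\cdot16/\mathfrak c$ multiplies a bracket $-\tfrac{1}{2\mathfrak c}\cdot$(as in $\mathcal G$), up to bounded errors. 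The leading singular terms in $\mathcal G$ are exactly those already computed, giving $-\tfrac34\mathfrak c_I\cdot(\ldots)$, and one obtains $-6n^2h^2/\mathfrak c+O(n^2h^2)$.

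\emph{Uniformity and the main obstacle.} All factors are homogeneous of degree two in $h$ once $h'/h$ is expressed through $\mathfrak c_{II}/\mathfrak c_I$. The constants therefore depend only on Baldi's analytic functions $\mathcal H,\mathcal T,\mathcal M$, which is the claimed independence from the cutoff height. Negativity follows for small $\iota$, since $\mathfrak c(I_{\rm c})\asymp\iota$.

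The main obstacle is this bookkeeping of the $h'$ terms. The $h'/h$ contributions in $L_I/L$ and in the Bernoulli bracket must combine so that the remaining coefficient is $O(1)$ rather than $O(1/\mathfrak c)$. I would check this first by writing
\[
\det\overline K=n^2\Omega_1^2\,\tfrac{L_I}{L}\bigl[-\tfrac{L_I}{L}-\langle|U_{\rm m}|^{-2}\rangle(\ldots)\bigr]
\]
and verifying that the $h'/h$ terms enter only multiplied by quantities of size $O(1)$ relative to the leading $1/\mathfrak c$ term.
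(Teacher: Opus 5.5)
Your plan follows the paper's proof. You take $I_{\rm c}$ as the maximizer of $\Omega_1$ on the outer ramp; restricting to $[9\iota/4,5\iota/2]$ is if anything cleaner, and you check the regularity hypotheses the paper leaves implicit. You get the trace from the averaged trace identity with $n\Omega_1'(I_{\rm c})=0$. You get the determinant from \eqref{eq:det-Kbar-axisymmetric-reduction} with $h=\chi_\iota(\Kcal(I))$ in place of $\chi_*$ and $h'/h=-\mathfrak c_{II}/\mathfrak c_I$.

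The gap is the error tolerance in the determinant step. The bracket in \eqref{eq:det-Kbar-axisymmetric-reduction} is only $O(1)$, because its two pieces of size $1/\mathfrak c$ cancel. These pieces are $-L_I/L\approx-2/\mathfrak c$ and $-\langle|U_{\mathrm m}|^{-2}\rangle\cdot(\text{Bernoulli combination})\approx-\frac{2}{h^2\mathfrak c}\cdot(-h^2)=\frac{2}{\mathfrak c}$. This cancellation is exactly what $\mathcal G=-\frac34+O(\mathfrak c)$ records. The main term $-6n^2h^2/\mathfrak c$ is therefore the prefactor $n^2\Omega_1^2L_I/L\approx 2n^2h^2/\mathfrak c$ (not $16n^2h^2/\mathfrak c$) times the $O(1)$ value $\mathfrak c_I\mathcal G\approx-3$ of the bracket. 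So ``bounded errors'' in the bracket are of the same order as the main term. The same holds for an $O(h^2\mathfrak c)$ error in the Bernoulli combination, which is multiplied by $\langle|U_{\mathrm m}|^{-2}\rangle\approx 2/(h^2\mathfrak c)$. Such errors fix neither the coefficient $-6$ nor the sign; you need the bracket to $o(1)$. Your stated obstacle is also misplaced. Since $h'/h=O(\mathfrak c)$, the $h'$ terms cannot produce $1/\mathfrak c$ contributions; the real risk is $O(1)$ contamination.

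The repair is an exact computation, which is what the paper does. Since $|U|^2/L^2=12\mathfrak c/\mathcal H(\mathfrak c)$ does not involve $h$,
\[
  \frac12\partial_I|U|^2-|U|^2\frac{L_I}{L}
  =\frac{L^2}{2}\,\partial_I\Bigl(\frac{|U|^2}{L^2}\Bigr).
\]
So the $h'$ terms cancel identically, and $\frac12\partial_I|U|^2-P_I-|U|^2L_I/L=\frac{h^2\mathfrak c_I}{8}\bigl(1-3\mathfrak c\,\mathcal H_{\mathfrak c}/\mathcal H\bigr)$ with no remainder. Differentiating $\mathfrak c_I\mathcal T(\mathfrak c)=2\pi$ gives $h'/h=\mathfrak c_I\mathcal T_{\mathfrak c}/\mathcal T$ at $I_{\rm c}$. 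Hence $L_I/L=\mathfrak c_I\bigl(\mathcal T_{\mathfrak c}/\mathcal T+\mathcal H_{\mathfrak c}/(2\mathcal H)\bigr)$. The bracket then equals $\mathfrak c_I\mathcal G(\mathfrak c)$ exactly, because the dropped term $\Omega_1'/\Omega_1$ reappears through $-h'/h$ inside $-L_I/L$. This gives
$\det\overline K(I_{\rm c})=\frac{n^2h^2\mathfrak c_I^4}{16}\bigl(\frac{\mathcal T_{\mathfrak c}}{\mathcal T}+\frac{\mathcal H_{\mathfrak c}}{2\mathcal H}\bigr)\mathcal G(\mathfrak c)$,
from which \eqref{eq:critical-torus-Kbar-determinant} and its independence of $h$ follow. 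Even without the exact cancellation, your own bound $h'/h=O(\mathfrak c)$ makes each $h'$ term $O(h^2\mathfrak c^2)$ in the Bernoulli combination, i.e.\ $O(\mathfrak c)$ in the bracket, and that would suffice. The $O(h^2\mathfrak c)$ you wrote does not.
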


\begin{proof}
Note $\lambda_k'(I_{\rm c})=0$ for every axisymmetric mode
$k=(n,0)$.
The function $\Omega _1$ is positive in the open active band and vanishes
at both flat edges.  On the plateau through $I=9\iota/4$,
$\Omega _1'=\chi_*\Kcal''>0$ by
\eqref{eq:K-second-derivative-lower-bound}.  Hence $\Omega _1$ attains an
interior maximum at some $I_{\rm c}$ satisfying
\eqref{eq:critical-torus-location}--
\eqref{eq:critical-torus-zero-twist}.

The trace identity \eqref{eq:direct-first-correction-trace} applies
on this torus without a nonzero-twist hypothesis.  Since
$(\mathbf n\cdot\eta_k)/|\nabla I|$ is periodic, we obtain by averaging it 
\begin{equation*}
  \operatorname{tr}\overline K(I_{\rm c})
  =\lambda_k'(I_{\rm c})
  =n\Omega_1'(I_{\rm c})=0.
\end{equation*}
which proves \eqref{eq:critical-torus-Kbar-trace}.

The localized torus invariants evaluated at $I=I_{\rm c}$ are
\begin{equation}  \label{eq:critical-torus-localized-invariants}
\begin{split}
  \Omega _1(I_{\rm c})
  &=\frac14\chi_\iota(\Kcal(I_{\rm c}))
    \mathfrak c_I(I_{\rm c}), \quad
  L(I_{\rm c})
  =\frac14\chi_\iota(\Kcal(I_{\rm c}))
    \sqrt{\mathcal H(\mathfrak c(I_{\rm c}))},\\
  |U|^2(I_{\rm c})
  &=\frac34\chi_\iota(\Kcal(I_{\rm c}))^2
    \mathfrak c(I_{\rm c}),\quad
  P_I(I_{\rm c})
  =\frac14\chi_\iota(\Kcal(I_{\rm c}))^2
    \mathfrak c_I(I_{\rm c}),
    \end{split}
\end{equation}
and the normalized orbit average is
\begin{equation}
  \left\langle\frac1{|U_{\mathrm m}|^2}\right\rangle_{T_0}
  =\frac{16}{\chi_\iota(\Kcal(I_{\rm c}))^2}
    \mathcal M(\mathfrak c(I_{\rm c})).
  \label{eq:critical-torus-meridional-speed-average}
\end{equation}
Since $\Omega _1'(I_{\rm c})=0$ and $\Omega_1(I_{\rm c})>0$, we have
\begin{equation}
  \left.
  \frac{\dd}{\dd I}\log\chi_\iota(\Kcal(I))
  \right|_{I=I_{\rm c}}
  =-\frac{\mathfrak c_{II}(I_{\rm c})}
          {\mathfrak c_I(I_{\rm c})}.
  \label{eq:critical-torus-log-cutoff-derivative}
\end{equation}
It follows from \eqref{eq:critical-torus-localized-invariants} that
\begin{equation*}
  \left.
  \left(
    \frac12\partial_I(|U|^2)-P_I-|U|^2\frac{L_I}{L}
  \right)\right|_{I=I_{\rm c}}
  =\frac{\chi_\iota(\Kcal(I_{\rm c}))^2
          \mathfrak c_I(I_{\rm c})}{8}
    \left(
      1-3\mathfrak c(I_{\rm c})
      \frac{\mathcal H_{\mathfrak c}(\mathfrak c(I_{\rm c}))}
           {\mathcal H(\mathfrak c(I_{\rm c}))}
    \right),
\end{equation*}
which combined with \eqref{eq:critical-torus-log-cutoff-derivative} and the action identity 
\[ \mathfrak c_I(I)\mathcal T(\mathfrak c(I))=2\pi\]
implies
\begin{equation}
  \left.\frac{L_I}{L}\right|_{I=I_{\rm c}}
  =\mathfrak c_I(I_{\rm c})\left(
    \frac{\mathcal T_{\mathfrak c}(\mathfrak c(I_{\rm c}))}
         {\mathcal T(\mathfrak c(I_{\rm c}))}
    +\frac{\mathcal H_{\mathfrak c}(\mathfrak c(I_{\rm c}))}
           {2\mathcal H(\mathfrak c(I_{\rm c}))}
  \right).
  \label{eq:critical-torus-L-log-derivative}
\end{equation}

  At $I=I_{\rm c}$, the wave-covector formula
  \eqref{eq:wave-covector} yields
  \[
    q(t)=(n,0,\ell),
    \qquad
    \xi_\ell(t)=\ell\nabla I(x(t))+\eta_k(t),
  \]
since $k\cdot\Om'(I_{\rm c})=n\Omega_1'(I_{\rm c})=0$.

Denote
 $T_{0,c} =\frac{2\pi}{\Omega_1(I_{\rm c})}.$
  For every sufficiently large
  $\ell$, applying Lemmas~\ref{lem:first-corrected-tangent-amplitude} and
  \ref{lem:post-limiting-Floquet-reduction} with $s(t)=\ell$,
  we obtain the periodic amplitude system
  \begin{equation}
    \dot z_{\mathrm F}=\left(\frac1\ell K(t;I_{\rm c})
      +\frac1{\ell^2}R_\ell(t)\right)z_{\mathrm F},
    \label{eq:critical-direct-normal-expansion}
  \end{equation}
  where $R_\ell$ is $ T_{0,c}$-periodic and for some $\ell_0$
  \begin{equation}
    \sup_{\ell\ge\ell_0}\|R_\ell\|_{C^1_t([0, T_{0,c}])}<\infty.
    \label{eq:critical-direct-normal-remainder}
  \end{equation}

  We next compute the period map. Let $\Phi_\ell(t)$ be the fundamental matrix of
  \eqref{eq:critical-direct-normal-expansion}, normalized by
  $\Phi_\ell(0)=\mathrm{Id}$.
  The Volterra equation is
  \begin{equation*}
    \Phi_\ell(t)=\mathrm{Id}
    +\int_0^t\left(
      \frac1\ell K(\tau;I_{\rm c})
      +\frac1{\ell^2}R_\ell(\tau)
    \right)\Phi_\ell(\tau)\,\dd\tau.
  \end{equation*}
  The uniform coefficient bounds in
  \eqref{eq:critical-direct-normal-remainder} and Gronwall's inequality imply
  \begin{equation*}
    \Phi_\ell(t)=\mathrm{Id}+O(\ell^{-1}),
    \qquad 0\le t\le  T_{0,c}.
  \end{equation*}
Thus it follows from the Volterra equation that
  \begin{equation}
  \begin{split}
    \Phi_\ell( T_{0,c})
    &=\mathrm{Id}
      +\frac1\ell\int_0^{ T_{0,c}}
        K(t;I_{\rm c})\,\dd t+O(\ell^{-2})\\
    &=\mathrm{Id}+\frac{ T_{0,c}}\ell
      \overline K(I_{\rm c})+O(\ell^{-2}),
  \end{split}
  \label{eq:critical-torus-monodromy-expansion}
  \end{equation}
  where the last equality used
  \eqref{eq:normal-Kbar-definition}.

  We now verify that the reduced period map also has determinant one.  For $p\in\nabla I(x(t))^\perp$ and with $s=\ell$, denote the graph map in
  \eqref{eq:amplitude-graph-map} by
  \begin{equation*}
    \mathfrak J_\ell(t)p
    :=p-\frac{p\cdot\eta_k(t)}
      {\ell|\nabla I(x(t))|
       +\mathbf n(t)\cdot\eta_k(t)}\,\mathbf n(t).
  \end{equation*}
  The transformations
  \[
    z_{\mathrm F}\longmapsto
    z_{\mathrm{tan}}=Z_\infty(t)z_{\mathrm F}
    \longmapsto p=Q_\infty(t)z_{\mathrm{tan}}
    \longmapsto a=\mathfrak J_\ell(t)p
  \]
  reconstruct the physical amplitude. Let $\mathscr O_{\rm orb}$ be the orientation-preserving rotation about
  the symmetry axis accumulated by the orbit over one meridional period.
  Periodicity in the cylindrical identification implies
  \begin{equation*}
    \xi_\ell(T_{0,c})
      =\mathscr O_{\rm orb}\xi_\ell(0),
    \qquad
   \mathfrak J_\ell(T_{0,c})Q_\infty(T_{0,c})Z_\infty(T_{0,c})
      =\mathscr O_{\rm orb} \mathfrak J_\ell(0)Q_\infty(0)Z_\infty(0).
  \end{equation*}
  Then $a(t)=\mathfrak J_\ell(t)Q_\infty(t)Z_\infty(t)z_{\mathrm F}(t)$ implies
  \begin{equation}
    \mathscr O_{\rm orb}^{-1}\mathcal U_a(T_{0,c})
    \mathfrak J_\ell(0)Q_\infty(0)Z_\infty(0)
    =\mathfrak J_\ell(0)Q_\infty(0)Z_\infty(0)\Phi_\ell(T_{0,c})
    \label{eq:critical-physical-reduced-monodromy-intertwining}
  \end{equation}
with $\mathcal U_a(T_{0,c})$ representing the physical amplitude map along this bicharacteristic.

  Since $\mathscr O_{\rm orb}$ is an orientation-preserving isometry and
  the critical covector has the same length after one period, we infer from the
  amplitude-area identity \eqref{eq:LH-amplitude-area-Jacobian} and
  \eqref{eq:critical-physical-reduced-monodromy-intertwining} that
  \[
    \bigl|\det\Phi_\ell(T_{0,c})\bigr|
    =\bigl|\det(\mathscr O_{\rm orb}^{-1}
      \mathcal U_a(T_{0,c}))\bigr|
    =\frac{|\xi_\ell(0)|}{|\xi_\ell(T_{0,c})|}=1.
  \]
  Since \eqref{eq:critical-torus-monodromy-expansion} shows that
  $\Phi_\ell(T_{0,c})=\mathrm{Id}+O(\ell^{-1})$,  $\det\Phi_\ell(T_{0,c})>0$ for all sufficiently large $\ell$; hence $\det\Phi_\ell(T_{0,c})=1$.

  To compute $\det\overline K(I_{\rm c})$, we apply
  Lemma~\ref{lem:explicit-averaged-first-correction-matrix} and the trace
  identity \eqref{eq:critical-torus-Kbar-trace}.  Thus
  $\overline K_{22}(I_{\rm c})=-\overline K_{11}(I_{\rm c})$, and
  \eqref{eq:det-Kbar-axisymmetric-reduction} applies with
  $\lambda_k'(I_{\rm c})=0$.  It follows from
  \eqref{eq:critical-torus-meridional-speed-average}--
  \eqref{eq:critical-torus-L-log-derivative} that
  \begin{equation}
    \det\overline K(I_{\rm c})
    =\frac{n^2\chi_\iota(\Kcal(I_{\rm c}))^2
      \mathfrak c_I(I_{\rm c})^4}{16}
     \left(
      \frac{\mathcal T_{\mathfrak c}(\mathfrak c(I_{\rm c}))}
           {\mathcal T(\mathfrak c(I_{\rm c}))}
      +\frac{\mathcal H_{\mathfrak c}(\mathfrak c(I_{\rm c}))}
             {2\mathcal H(\mathfrak c(I_{\rm c}))}
     \right)\mathcal G(\mathfrak c(I_{\rm c})).
    \label{eq:critical-torus-Kbar-exact}
  \end{equation}
  Equations \eqref{eq:Baldi-appendix-H-expansion},
  \eqref{eq:Baldi-appendix-T-expansion}, and
  \eqref{eq:G-level-sharp-expansion} imply
  \begin{equation*}
  \begin{split}
    \frac{\mathcal T_{\mathfrak c}(\mathfrak c(I_{\rm c}))}
         {\mathcal T(\mathfrak c(I_{\rm c}))}
     +\frac{\mathcal H_{\mathfrak c}(\mathfrak c(I_{\rm c}))}
            {2\mathcal H(\mathfrak c(I_{\rm c}))}
    &=\frac1{2\mathfrak c(I_{\rm c})}+O(1),\\
    \mathcal G(\mathfrak c(I_{\rm c}))
    &=-\frac34+O(\mathfrak c(I_{\rm c})),\\
    \mathfrak c_I(I_{\rm c})
    &=4+O(\mathfrak c(I_{\rm c})^2).
  \end{split}
  \end{equation*}
  Since $\mathfrak c(I_{\rm c})=O(\iota)$,
  \eqref{eq:critical-torus-Kbar-determinant} follows from
  \eqref{eq:critical-torus-Kbar-exact}.  This completes the proof.
\end{proof}


Proposition~\ref{prop:critical-torus-negative-determinant} identifies a critical torus on which the averaged
first correction is hyperbolic.  We now show the existence of an unstable Floquet multiplier for the periodic amplitude system.

\begin{proposition}
\label{prop:critical-torus-exponential-LH}
Let $I_{\rm c}$ be from
Proposition~\ref{prop:critical-torus-negative-determinant}.
For every fixed $n\ne0$ and all sufficiently large positive $\ell$, the
axisymmetric ray with phase
\begin{equation*}
  S=n\sigma+\ell I
\end{equation*}
on $I=I_{\rm c}$ admits a nonzero real initial polarization
$a_\ell(0)\in\xi_\ell(0)^\perp$ whose corresponding physical velocity
amplitude satisfies
\begin{equation}
  |a_\ell(t)|\ge c_\ell e^{\sigma_\ell t}|a_\ell(0)|,
  \qquad t\ge0,
  \qquad
  \sigma_\ell
  =\frac{\sqrt{-\det\overline K(I_{\rm c})}}{\ell}
    +O(\ell^{-2})>0
  \label{eq:critical-torus-exponential-amplitude}
\end{equation}
for a constant $c_\ell>0$ independent of $t$.
\end{proposition}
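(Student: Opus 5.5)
The plan is to read off the growth from the one-period monodromy $\Phi_\ell(T_{0,c})$ computed in the proof of Proposition~\ref{prop:critical-torus-negative-determinant}, and then iterate it. Two facts from that proof do most of the work. First, on $I=I_{\rm c}$ the covector does not shear, since $s(t)\equiv\ell$, so the reduced system \eqref{eq:critical-direct-normal-expansion} is genuinely $T_{0,c}$-periodic. Second,
\[
\Phi_\ell(T_{0,c})=\mathrm{Id}+\frac{T_{0,c}}{\ell}\overline K(I_{\rm c})+O(\ell^{-2}),\qquad \det\Phi_\ell(T_{0,c})=1 .
\]
Write $M_\ell:=\Phi_\ell(T_{0,c})\in SL(2,\mathbb R)$. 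Its eigenvalues are $\mu_\pm$ with $\mu_+\mu_-=1$ and $\mu_++\mu_-=\operatorname{tr}M_\ell$. So the first task is to show $\operatorname{tr}M_\ell>2$.

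\textbf{Step 1: second-order expansion of the trace.} The first-order expansion alone gives only $\operatorname{tr}M_\ell=2+O(\ell^{-2})$, because $\operatorname{tr}\overline K(I_{\rm c})=0$ by \eqref{eq:critical-torus-Kbar-trace}. We therefore need the $\ell^{-2}$ term, and this is the main obstacle.

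The first thing to try is to avoid computing that term at all and instead use $\det M_\ell=1$. Set $M_\ell=\mathrm{Id}+\epsilon A$ with $\epsilon=T_{0,c}/\ell$. Then
\[
1=\det M_\ell=1+\epsilon\operatorname{tr}A+\epsilon^2\det A,
\]
so $\operatorname{tr}M_\ell=2+\epsilon\operatorname{tr}A=2-\epsilon^2\det A$. The problem is reduced to finding $\det A$ to leading order. Since $A=\overline K(I_{\rm c})+O(\ell^{-1})$, we get $\det A=\det\overline K(I_{\rm c})+O(\ell^{-1})$, and therefore
\[
\operatorname{tr}M_\ell=2-\frac{T_{0,c}^2}{\ell^2}\det\overline K(I_{\rm c})+O(\ell^{-3}).
\]
By \eqref{eq:critical-torus-Kbar-determinant} this exceeds $2$ once $\ell$ is large. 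This step needs the $O(\ell^{-2})$ remainder in $A$ to be uniform in $\ell$. That uniformity comes from \eqref{eq:critical-direct-normal-remainder} together with the Gronwall/Volterra bound already used.

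\textbf{Step 2: the eigenvalues.} Since $\operatorname{tr}M_\ell>2$, $M_\ell$ is hyperbolic with real eigenvalue
\[
\mu_+=\exp\!\Bigl(\frac{T_{0,c}}{\ell}\sqrt{-\det\overline K(I_{\rm c})}+O(\ell^{-2})\Bigr)>1,
\]
obtained by solving $\mu+\mu^{-1}=\operatorname{tr}M_\ell$, i.e.\ $\cosh(\log\mu_+)=1+\frac{\epsilon^2}{2}(-\det\overline K)+O(\ell^{-3})$. Define $\sigma_\ell:=T_{0,c}^{-1}\log\mu_+$, which gives the formula stated in \eqref{eq:critical-torus-exponential-amplitude}. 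Let $w$ be a real eigenvector for $\mu_+$.

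\textbf{Step 3: the polarization and the lower bound.} Set $z_{\mathrm F}(0)=w$ and define
\[
a_\ell(0)=\mathfrak J_\ell(0)Q_\infty(0)Z_\infty(0)w\in\xi_\ell(0)^\perp .
\]
This is nonzero because the map $\mathfrak J_\ell(0)Q_\infty(0)Z_\infty(0)$ is injective. By Floquet theory, $z_{\mathrm F}(t)=P(t)e^{\sigma_\ell t}w$ with $P$ periodic. Concretely, for $t=jT_{0,c}+\tau$ with $\tau\in[0,T_{0,c})$,
\[
z_{\mathrm F}(t)=\Phi_\ell(\tau)\mu_+^{\,j}w,
\]
and $\Phi_\ell(\tau)$ is invertible and uniformly bounded below on $[0,T_{0,c}]$. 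Hence $|z_{\mathrm F}(t)|\ge c\,e^{\sigma_\ell t}|w|$.

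To transfer this to the physical amplitude, we use the intertwining relation \eqref{eq:critical-physical-reduced-monodromy-intertwining} extended to all $t$:
\[
a(t)=\mathfrak J_\ell(t)Q_\infty(t)Z_\infty(t)z_{\mathrm F}(t).
\]
The operators $\mathfrak J_\ell(t)$, $Q_\infty(t)$ and $Z_\infty(t)$ are periodic in the cylindrical identification, and the Euclidean rotation $\mathscr O_{\rm orb}$ preserves norms. So they are uniformly bounded with uniformly bounded inverses: $\mathfrak J_\ell$ is a graph map that is $O(\ell^{-1})$-close to the identity on tangent vectors, $Q_\infty$ has orthonormal columns, and $Z_\infty$ is invertible because $|U_{\mathrm m}|>0$ and $V_\varphi\ne0$. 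This gives $|a_\ell(t)|\ge c_\ell e^{\sigma_\ell t}|a_\ell(0)|$.

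The delicate points are therefore the uniformity in Step 1 and the verification that the reduced amplitude system is exactly equivalent to \eqref{eq:LH-a} at $s=\ell$, not just up to the remainder. The second holds because Lemma~\ref{lem:first-corrected-tangent-amplitude} is an exact rewriting, with $R_2$ defined as the difference.
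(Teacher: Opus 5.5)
Your argument is correct and follows the paper's proof in structure. Both use the monodromy expansion $\Phi_\ell(T_{0,c})=\mathrm{Id}+\frac{T_{0,c}}{\ell}\overline K(I_{\rm c})+O(\ell^{-2})$, take a real eigenvector for the multiplier $\rho_{\ell,+}>1$, iterate via $z_{\mathrm F}(qT_{0,c}+\tau)=\rho_{\ell,+}^{q}\Phi_\ell(\tau)v_{\ell,+}$, and transfer the bound through the uniformly invertible reconstruction map $\mathfrak J_\ell Q_\infty Z_\infty$.

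The only difference is how you get hyperbolicity. The paper perturbs the simple real eigenvalues $\pm\sqrt{-\det\overline K(I_{\rm c})}$ of $\overline K(I_{\rm c})$ and obtains $\rho_{\ell,\pm}=1\pm\frac{T_{0,c}}{\ell}\sqrt{-\det\overline K(I_{\rm c})}+O(\ell^{-2})$ directly. So the $\ell^{-2}$ term of the trace is not actually an obstacle, and $\det\Phi_\ell(T_{0,c})=1$ is not needed at this step. Your identity $\operatorname{tr}M_\ell=2-\epsilon^2\det A$ instead relies on $\det\Phi_\ell(T_{0,c})=1$ from the preceding proposition. It is an equally valid route, and it also gives $\rho_{\ell,-}=\rho_{\ell,+}^{-1}$ exactly.
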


\begin{proof}
We note the reduced system \eqref{eq:critical-direct-normal-expansion} has real and $T_{0,c}$-periodic coefficients.
Proposition~\ref{prop:critical-torus-negative-determinant} shows that the
eigenvalues of $\overline K(I_{\rm c})$ are
$\pm\sqrt{-\det\overline K(I_{\rm c})}$.  Standard perturbation theory
for periodic linear systems \cite{CoddingtonLevinson1955}
applied to \eqref{eq:critical-torus-monodromy-expansion} produces Floquet multipliers
\begin{equation*}
  \rho_{\ell,\pm}
  =1\pm\frac{T_{0,c}}\ell
    \sqrt{-\det\overline K(I_{\rm c})}
    +O(\ell^{-2}).
\end{equation*}
Thus $\rho_{\ell,+}>1$ for large $\ell$.  Define its Floquet
exponent by
\begin{equation*}
  \sigma_\ell:=\frac1{T_{0,c}}\log\rho_{\ell,+}
  =\frac{\sqrt{-\det\overline K(I_{\rm c})}}\ell
    +O(\ell^{-2})>0.
\end{equation*}
Since the real period map $\Phi_\ell(T_{0,c})$ has the real simple
eigenvalue $\rho_{\ell,+}$, we choose a real eigenvector
$v_{\ell,+}\ne0$ and let $z_{\mathrm F}(t)$ be the solution with
$z_{\mathrm F}(0)=v_{\ell,+}$.  For $t=qT_{0,c}+\tau$ with $q\in\mathbb N_0$ and
$0\le\tau<T_{0,c}$, periodicity implies
\begin{equation*}
  z_{\mathrm F}(t)
  =\Phi_\ell(\tau)\Phi_\ell(T_{0,c})^qv_{\ell,+}
  =\rho_{\ell,+}^q\Phi_\ell(\tau)v_{\ell,+}.
\end{equation*}
In view of $\rho_{\ell,+}^q=e^{\sigma_\ell(t-\tau)}$, it follows that
\begin{equation*}
  |z_{\mathrm F}(t)|\ge c'_{\ell}e^{\sigma_\ell t}
    |v_{\ell,+}|,
  \qquad t\ge0,
\end{equation*}
for some $c'_{\ell}>0$.

For sufficiently large $\ell$, the real reconstruction map $a_\ell(t)=\mathfrak J_\ell(t)Q_\infty(t)Z_\infty(t)z_{\mathrm F}(t)$ is invertible from $\mathbb R^2$ onto $\xi_\ell(t)^\perp$, and its norm and inverse norm
are bounded uniformly in $t$. Hence $a_\ell(0)$ is a nonzero real transverse polarization, and
the estimate above yields
\eqref{eq:critical-torus-exponential-amplitude} after adjusting the constant $c_\ell$.
\end{proof}

\section{Instability of the linearized Euler equation}
\label{sec-lift}

We now lift the exponential growth of the critical-torus velocity cocycle to
the linearized Euler evolution.

\subsection{Evolution of the linearized Euler equation}

Fix one of the smooth localized equilibria $U=U_\iota$ and, until the proof
of the main theorem, suppress the subscript $\iota$.  Let $G(t)$ denote the
linearized Euler evolution group on the complexified
energy space 
\[L^2_{\rm div}(\mathbb R^3):=
\left\{
v\in L^2(\mathbb R^3;\mathbb C^3):
\nabla\cdot v=0
\ \text{in }\mathcal D'(\mathbb R^3)
\right\}.\]  
Since $U$ is smooth and compactly supported, $G(t)$ acts boundedly on the
divergence-free subspace of $H^s(\R^3;\mathbb C^3)$ for every fixed $t$
and every $s\ge0$.
Recall that the essential norm of a
bounded operator is
\begin{equation*}
  \|\mathscr T\|_{\rm ess}
  :=\inf_{\mathcal C\,\mathrm{compact}}
    \|\mathscr T-\mathcal C\|.
\end{equation*}
For rotations $\mathscr O_\alpha$ about the symmetry axis, denote
\begin{equation*}
  H^s_{\rm axi}
  :=\{v\in H^s(\R^3;\mathbb C^3):\Div v=0,
    (\mathscr O_\alpha)_*v=v\ \text{for every }\alpha\in\T\}.
\end{equation*}
This is a closed subspace of $H^s$ and is invariant under $G(t)$ since the linearized Euler evolution commute with rotations due to the axisymmetry of $U$.

Let $X_t$ be the physical Lagrangian flow of $U$,
\begin{equation*}
  \partial_tX_t(x)=U(X_t(x)),\qquad X_0(x)=x.
\end{equation*}
For $\xi\ne0$, let $\mathscr U(t;x,\xi)$ denote the solution operator of
the bicharacteristic--amplitude equation above the bicharacteristic with
physical initial point and covector $(x,\xi)$.  Its transported covector is
\begin{equation*}
  \xi_t(x,\xi):=DX_t(x)^{-\mathsf T}\xi.
\end{equation*}

\begin{lemma}\label{lem:fixed-time-Euler-geometric-optics-transfer}
Let $A\in C_0^\infty(\R^3;\mathbb C^3)$, and let $\phi$ be smooth on a
neighborhood of $\operatorname{supp}A$ with
\begin{equation*}
  \dd\phi\ne0,
  \qquad A(x)\cdot\dd\phi(x)=0
  \quad\text{on }\operatorname{supp}A.
\end{equation*}
For $j\to+\infty$, denote
\begin{equation*}
  f_j:=\PP\bigl[Ae^{ij\phi}\bigr].
\end{equation*}
Then, for every fixed $T\ge0$ and $s\ge0$, we have
\begin{align}
  j^{-s}\langle D\rangle^sf_j(x)
  &=e^{ij\phi(x)}|\dd\phi(x)|^sA(x)+o_{L^2}(1),
  \label{eq:fixed-time-Euler-WKB-input}\\
  j^{-s}\bigl(\langle D\rangle^sG(T)f_j\bigr)(X_T(x))
  &=e^{ij\phi(x)}|\xi_T(x,\dd\phi(x))|^s
    \mathscr U(T;x,\dd\phi(x))A(x)+o_{L^2}(1)
  \label{eq:fixed-time-Euler-WKB-output}
\end{align}
with $\langle D\rangle^s=(1-\Delta)^{s/2}$.
The terms $o_{L^2}(1)$ may depend on $A$, $\phi$, $T$, and $s$.
\end{lemma}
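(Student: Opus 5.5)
The plan is to build a WKB approximate solution of the linearized Euler equation with initial data $Ae^{ij\phi}$, show it stays $O(j^{-1})$-close to the true evolution in the $j$-weighted $H^s$ norm, and then read off both identities by a stationary (non-)phase computation of $\langle D\rangle^s$ acting on oscillatory profiles. I treat \eqref{eq:fixed-time-Euler-WKB-input} first, since it is the $T=0$ model of \eqref{eq:fixed-time-Euler-WKB-output}.

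\textbf{Step 1 (input).} Write $\PP[Ae^{ij\phi}]=Ae^{ij\phi}-\nabla\Delta^{-1}\Div(Ae^{ij\phi})$. Because $A\cdot\dd\phi=0$, we have $\Div(Ae^{ij\phi})=e^{ij\phi}\Div A$, which is $O(1)$ rather than $O(j)$. Hence the gradient correction is $\nabla\Delta^{-1}$ applied to a $j$-oscillating compactly supported function. A nonstationary-phase or integration-by-parts argument, using $\dd\phi\neq0$ on $\operatorname{supp}A$, shows this term is $O(j^{-1})$ in $L^2$ and $O(j^{s-1})$ in $H^s$. For the main term, $\langle D\rangle^s(Ae^{ij\phi})=e^{ij\phi}\bigl(\langle j\dd\phi\rangle^sA+O_{L^2}(j^{s-1})\bigr)$ by the standard symbol calculus for pseudodifferential operators acting on oscillatory exponentials (Hörmander's lemma on $e^{-ij\phi}P(e^{ij\phi}A)$). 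Dividing by $j^s$ and using $j^{-s}\langle j\dd\phi\rangle^s\to|\dd\phi|^s$ uniformly on the support gives \eqref{eq:fixed-time-Euler-WKB-input}.

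\textbf{Step 2 (approximate solution).} Let $\phi_t$ solve the eikonal equation $\partial_t\phi_t+U\cdot\nabla\phi_t=0$, so $\phi_t=\phi\circ X_t^{-1}$; it is smooth for $t\le T$ and satisfies $\nabla\phi_t(X_t(x))=\xi_t(x,\dd\phi(x))\neq0$. Define the transported amplitude $A_t(X_t(x)):=\mathscr U(T;x,\dd\phi(x))A(x)$ with $t$ in place of $T$. By the bicharacteristic-amplitude system \eqref{eq:LH-x}--\eqref{eq:LH-a}, $A_t$ solves the first-order transport equation obtained from the $O(j)$ and $O(1)$ terms of the linearized equation once the pressure is eliminated, and it stays orthogonal to $\nabla\phi_t$. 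The approximate solution is $w_j(t)=\PP[A_te^{ij\phi_t}]$. Inserting it into $\partial_tv=L_vv$ leaves a residual of size $O(1)$ in $L^2$ and $O(j^{s})\cdot j^{-1}$ relative to the $j^s$ scale in $H^s$. The $O(j)$ terms cancel by the eikonal equation. The $O(1)$ terms cancel because the pressure symbol is exactly the rank-one correction $2\xi(\xi\cdot DUa)/|\xi|^2$, up to the commutator of $\PP$ with multiplication by $DU$, which is of lower order.

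\textbf{Step 3 (error and conclusion).} Duhamel's formula together with the bound $\|G(t)\|_{H^s\to H^s}\le C_T$ for $t\le T$ gives $\|G(T)f_j-w_j(T)\|_{H^s}=O(j^{s-1})$. This is $o(j^s)$. Applying Step 1 to $w_j(T)$ with $(A_T,\phi_T)$ yields $j^{-s}\langle D\rangle^sw_j(T)=e^{ij\phi_T}|\nabla\phi_T|^sA_T+o_{L^2}(1)$. Composing with the volume-preserving map $X_T$ preserves $L^2$ norms, and this gives \eqref{eq:fixed-time-Euler-WKB-output}.

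The main obstacle is Step 2. The hard part is showing rigorously that the nonlocal term $\PP(\nabla_vU)$ contributes precisely the Lifschitz--Hameiri projection term at order $O(1)$, and that the residual is genuinely one order lower in every $H^s$. I would attack this first by rewriting $\PP$ as a zeroth-order pseudodifferential operator with symbol $\mathrm{Id}-\xi\otimes\xi/|\xi|^2$ and then applying the composition formula. Where possible I would defer to the transfer results of Vishik \cite{Vishik1996} and Shvydkoy \cite{Shvydkoy2006}, which carry out exactly this computation.
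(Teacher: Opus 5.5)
Your argument is correct, but it is organized differently from the paper's proof, which is brief and largely by citation. The paper does three things:
\begin{enumerate}
\item It rewrites the linearization in advective form, $\partial_tv=-(U\cdot\nabla)v+DU\,v-2\PP(DU\,v)$, using that $(U\cdot\nabla)v-DU\,v$ is divergence free.
\item It reads off the principal symbol $(2\xi\otimes\xi/|\xi|^2-\mathrm{Id})DU$ of the zeroth-order part, which identifies $\mathscr U$ as the principal cocycle.
\item It appeals to Shvydkoy's analysis of the conjugated group $\langle D\rangle^sG(T)\langle D\rangle^{-s}$, whose cocycle acquires the multiplier $(|DX_T^{-\mathsf T}\xi|/|\xi|)^s$, and asserts that both identities follow by ``a similar analysis.''
\end{enumerate}

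You instead construct the explicit parametrix $w_j(t)=\PP[A_te^{ij\phi_t}]$ with $w_j(0)=f_j$. You close it by Duhamel, using only $\sup_{0\le t\le T}\|G(t)\|_{H^s\to H^s}<\infty$. You then obtain the weight $|\xi_T|^s$ not from a conjugated cocycle but by re-applying your $T=0$ computation to the transported pair $(A_T,\phi_T)$. This works because $\nabla\phi_T\circ X_T=\xi_T(\cdot,\dd\phi)$ and $X_T$ preserves volume.

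Your route is self-contained, never needs the symbol calculus of the conjugated evolution, and gives a quantitative $O(j^{-1})$ error. The paper's route is shorter and embeds the lemma in the general Vishik--Shvydkoy transfer theory, but it leaves the actual estimates to the references.

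Two details need tightening.
\begin{itemize}
\item \textbf{Residual size.} You say the residual is ``of size $O(1)$ in $L^2$.'' That contradicts your Step 3, and if literally true it would give only an $O(1)$ error for $s=0$. What your cancellations actually give, and what Duhamel needs, is $O(j^{-1})$ in $L^2$ and $O(j^{s-1})$ in $H^s$.
\item \textbf{Gradient corrector.} $w_j$ contains the corrector $-\nabla\psi_j$, where $\psi_j=\Delta^{-1}(e^{ij\phi_t}\Div A_t)$. This corrector is $O(j^{-1})$ in $L^2$, but $(U\cdot\nabla)\nabla\psi_j$ is $O(1)$. It drops out only after projection, via $\PP[(U\cdot\nabla)\nabla\psi_j]=-\PP[(DU)^{\mathsf T}\nabla\psi_j]$.
\end{itemize}

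Once the corrector is handled, the $O(1)$ check is one line. The amplitude equation turns the leading term into $\PP[c\,e^{ij\phi_t}\nabla\phi_t]=-(ij)^{-1}\PP[e^{ij\phi_t}\nabla c]$. The factor $2$ in the amplitude equation is exactly what keeps $A_t\perp\nabla\phi_t$, so that $\Div(A_te^{ij\phi_t})$ stays $O(1)$ and your Step~1 applies at time $T$.
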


\begin{proof}
For a divergence-free perturbation $v$, we have
\begin{equation*}
  \partial_tv=-\PP\bigl[(U\cdot\nabla)v+DU\,v\bigr],
\end{equation*}
which can be written as
\begin{equation}
  \partial_tv=-(U\cdot\nabla)v+DU\,v-2\PP(DU\,v),
  \label{eq:Euler-advective-pseudodifferential-form}
\end{equation}
since $(U\cdot\nabla)v-DU\,v$ is divergence free.  
Note the principal symbol of the last two terms in \eqref{eq:Euler-advective-pseudodifferential-form} is
\begin{equation}
  \left(2\frac{\xi\otimes\xi}{|\xi|^2}-\mathrm{Id}\right)DU(x),
  \label{eq:Euler-velocity-principal-symbol}
\end{equation}
which is precisely the right-hand side of \eqref{eq:LH-a} under the constraint $a\cdot\xi=0$.  Thus $\mathscr U$ is the principal cocycle associated with \eqref{eq:Euler-advective-pseudodifferential-form}.

As shown in \cite[Section~3.5]{Shvydkoy2006}, for $s\geq 0$, conjugation by the operator $\langle D\rangle^s$ produces the multiplier to the principal cocycle by
\begin{equation*}
  \left(\frac{|DX_T(x)^{-\mathsf T}\xi|}{|\xi|}\right)^s.
\end{equation*}
We obtain \eqref{eq:fixed-time-Euler-WKB-input} and \eqref{eq:fixed-time-Euler-WKB-output} by applying a similar analysis to the conjugated evolution.
\end{proof}

Applying Lemma \ref{lem:fixed-time-Euler-geometric-optics-transfer}, we first prove a general $L^2$ lower bound.  Its
symmetry-preserving $H^s$ refinement follows afterward.

\begin{proposition}\label{prop:exact-PDE-lower-bound-from-BAS}
Fix a bicharacteristic segment
$(x(t),\xi(t),a(t))$, $0\le t\le T$, contained in the interior
action--angle band, with $\xi(0)\ne0$, $a(0)\ne0$, and
$a(0)\cdot\xi(0)=0$.  Then
\begin{equation}
  \|G(T)\|_{\rm ess}
  \ge \frac{|a(T)|}{|a(0)|}.
  \label{eq:essential-norm-BAS-lower-bound}
\end{equation}
\end{proposition}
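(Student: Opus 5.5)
We will test $G(T)$ on a sequence of localized high-frequency packets built from the given bicharacteristic and use Lemma~\ref{lem:fixed-time-Euler-geometric-optics-transfer} with $s=0$. Write $x_0=x(0)$, $\xi_0=\xi(0)$, $a_0=a(0)$.

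\textbf{Construction of the packets.} Choose a smooth phase $\phi$ near $x_0$ with $\dd\phi(x_0)=\xi_0$; the linear function $\phi(x)=\xi_0\cdot x$ will do. Next, choose a smooth field $\tilde A$ near $x_0$ with $\tilde A\cdot\dd\phi=0$ and $\tilde A(x_0)=a_0$. For the linear phase, the constant field $a_0$ already satisfies this. Fix a cutoff $\psi_\delta\in C_0^\infty(B_\delta(x_0))$, normalized so that $\|\psi_\delta\|_{L^2}=1$, and set $A_\delta=\psi_\delta\tilde A$. For $\delta$ small, the support of $A_\delta$ lies in the interior band where the chart is valid; the bicharacteristics issuing from it stay there for $t\in[0,T]$ by continuity. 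Put
\[
f_j=\PP[A_\delta e^{ij\phi}].
\]

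\textbf{Weak convergence and output norm.} By \eqref{eq:fixed-time-Euler-WKB-input} with $s=0$, we have $\|f_j\|_{L^2}\to\|A_\delta\|_{L^2}$. Because of the oscillation $e^{ij\phi}$ with $\dd\phi\ne0$, the Riemann--Lebesgue lemma gives $f_j\rightharpoonup0$ weakly in $L^2_{\rm div}$. Hence $\mathcal Cf_j\to0$ strongly for every compact $\mathcal C$. By \eqref{eq:fixed-time-Euler-WKB-output}, and because $X_T$ is measure preserving (so composing with $X_T$ preserves $L^2$ norms), we get
\[
\|G(T)f_j\|_{L^2}\to\bigl\|\mathscr U(T;\cdot,\dd\phi)A_\delta\bigr\|_{L^2}.
\]
Therefore, for every compact $\mathcal C$,
\[
\|G(T)-\mathcal C\|\ \ge\ \limsup_{j\to\infty}\frac{\|(G(T)-\mathcal C)f_j\|}{\|f_j\|}
=\frac{\|\mathscr U(T;\cdot,\dd\phi)A_\delta\|_{L^2}}{\|A_\delta\|_{L^2}}.
\]
Taking the infimum over $\mathcal C$ bounds $\|G(T)\|_{\rm ess}$ from below by this ratio.

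\textbf{Shrinking the support.} The solution map of \eqref{eq:LH-x}--\eqref{eq:LH-a} depends continuously on the initial data $(x,\xi,a)$ on $[0,T]$, since the coefficients are smooth and $\xi\neq0$ is preserved. Consequently $|\mathscr U(T;x,\dd\phi(x))\tilde A(x)|\to|a(T)|$ and $|\tilde A(x)|\to|a_0|$ uniformly on $B_\delta(x_0)$ as $\delta\to0$. The ratio of weighted $L^2$ norms therefore tends to $|a(T)|/|a(0)|$, and this yields \eqref{eq:essential-norm-BAS-lower-bound}.

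\textbf{Main obstacle.} The main obstacle is justifying Lemma~\ref{lem:fixed-time-Euler-geometric-optics-transfer} on the complexified space, in particular the $o_{L^2}(1)$ remainders uniformly in $j$. That lemma is taken as given here. What remains to check is that the Leray projection in $f_j$ costs only $o(1)$: since $A_\delta\cdot\dd\phi=0$, the divergence of $A_\delta e^{ij\phi}$ is $O(1)$ rather than $O(j)$, so the correction has size $O(j^{-1})$. One also has to note that the definition of the essential norm permits testing against normalized weakly null sequences, which is the step used above.
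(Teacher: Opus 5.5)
Your proposal is correct and follows the paper's proof essentially step by step. The paper also uses the linear phase $\xi_0\cdot(x-x_0)$ and the constant amplitude $a_0$ times an $L^2$-normalized bump $h_{\mathfrak r}$. It applies Lemma~\ref{lem:fixed-time-Euler-geometric-optics-transfer} with $s=0$, uses $\det DX_T=1$ to obtain the limiting amplification ratio, and removes compact operators by weak nullity of the normalized packets. It then lets the localization radius shrink using continuity of the cocycle. Your additional remarks (Riemann--Lebesgue for weak convergence, and the $O(j^{-1})$ Leray correction coming from $A\cdot\dd\phi=0$) only make explicit steps the paper asserts without detail.
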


\begin{proof}
For simplification, we denote
\begin{equation*}
  x_0:=x(0),\qquad \xi_0:=\xi(0),\qquad a_0:=a(0).
\end{equation*}
Let $\mathfrak r>0$ be a spatial localization scale and $\varepsilon>0$ be a wavelength parameter.
Choose $h\in C_0^\infty(\R^3)$ with $\|h\|_{L^2}=1$ and define
\begin{equation*}
  h_{\mathfrak r}(x)
  :=\mathfrak r^{-3/2}
  h\!\left(\frac{x-x_0}{\mathfrak r}\right).
\end{equation*}
For all sufficiently small fixed $\mathfrak r$, the support of
$h_{\mathfrak r}$ lies strictly inside the action--angle band.
We also define
\begin{equation}
  f_{\varepsilon,\mathfrak r}
  :=\PP\left[
    h_{\mathfrak r}(x)a_0
    e^{i\xi_0\cdot(x-x_0)/\varepsilon}
  \right],
  \label{eq:two-parameter-div-free-packet}
\end{equation}
where $\PP$ is the Leray projection. Since $a_0\cdot\xi_0=0$, it follows from a direct Fourier-multiplier calculation that for
each fixed $\mathfrak r$,
\begin{equation}
  f_{\varepsilon,\mathfrak r}
  =h_{\mathfrak r}a_0
   e^{i\xi_0\cdot(x-x_0)/\varepsilon}+o_{L^2}(1),
  \qquad
  \|f_{\varepsilon,\mathfrak r}\|_{L^2}
  \longrightarrow |a_0|.
  \label{eq:Leray-fixed-radius-asymptotic}
\end{equation}

Applying Lemma~\ref{lem:fixed-time-Euler-geometric-optics-transfer} with
$s=0$, $j=\varepsilon^{-1}$,
$\phi(x)=\xi_0\cdot(x-x_0)$, and
$A(x)=h_{\mathfrak r}(x)a_0$, we obtain for fixed $\mathfrak r$ and
$\varepsilon\downarrow0$ that
\begin{equation}
  G(T)f_{\varepsilon,\mathfrak r}(X_T(x))
  =e^{i\xi_0\cdot(x-x_0)/\varepsilon}
   h_{\mathfrak r}(x)
   \mathscr U(T;x,\xi_0)a_0+o_{L^2}(1),
  \label{eq:fixed-radius-short-wave-expansion}
\end{equation}
where the error may depend on $\mathfrak r$ and $T$. Changing variables from the initial point $x$ to the
evolved point $X_T(x)$ in \eqref{eq:fixed-radius-short-wave-expansion} and employing $\det DX_T=1$ and
\eqref{eq:Leray-fixed-radius-asymptotic} yields
\begin{equation}
  \lim_{\varepsilon\downarrow0}
  \frac{\|G(T)f_{\varepsilon,\mathfrak r}\|_{L^2}^2}
       {\|f_{\varepsilon,\mathfrak r}\|_{L^2}^2}
  =R_{\mathfrak r}(T)^2
  \label{eq:fixed-radius-amplification-limit}
\end{equation}
with
\begin{equation*}
  R_{\mathfrak r}(T)^2
  :=\frac{\displaystyle\int_{\R^3}
    |\mathscr U(T;x,\xi_0)a_0|^2
    |h_{\mathfrak r}(x)|^2\,\dd x}
  {|a_0|^2\displaystyle\int_{\R^3}
    |h_{\mathfrak r}(x)|^2\,\dd x}.
\end{equation*}
Due to continuity of the cocycle on its initial point,  we obtain
\begin{equation}
  \lim_{\mathfrak r\downarrow0}
  R_{\mathfrak r}(T)
  =\frac{|\mathscr U(T;x_0,\xi_0)a_0|}{|a_0|}
  =\frac{|a(T)|}{|a(0)|}.
  \label{eq:localization-radius-limit}
\end{equation}

For each fixed
$\mathfrak r$, denote
\begin{equation*}
  \widehat f_{\varepsilon,\mathfrak r}
  :=\frac{f_{\varepsilon,\mathfrak r}}
  {\|f_{\varepsilon,\mathfrak r}\|_{L^2}}.
\end{equation*}
Since $\widehat f_{\varepsilon,\mathfrak r}\rightharpoonup0$ in
$L^2_{\rm div}$ as $\varepsilon\downarrow0$, we have $\mathcal C\widehat f_{\varepsilon,\mathfrak r}\to0$ strongly for any compact operator $\mathcal C$.
Equation
\eqref{eq:fixed-radius-amplification-limit} and the triangle inequality then imply
\begin{equation*}
  \|G(T)-\mathcal C\|
  \ge\limsup_{\varepsilon\downarrow0}
  \|(G(T)-\mathcal C)
    \widehat f_{\varepsilon,\mathfrak r}\|_{L^2}
  \ge R_{\mathfrak r}(T).
\end{equation*}
We obtain \eqref{eq:essential-norm-BAS-lower-bound} by taking first the infimum over compact $\mathcal C$ and then the limit
$\mathfrak r\downarrow0$.
\end{proof}


\begin{proposition}\label{prop:axisymmetric-fixed-time-realization}
Let an axisymmetric bicharacteristic be generated in the action--angle chart
by the initial phase
\begin{equation*}
  S_0(y):=S(0,y)=n\sigma+\ell I,
  \qquad y=(\sigma,\beta,I),
  \qquad n\in\mathbb Z\setminus\{0\},\quad \ell\in\mathbb R,
\end{equation*}
where the initial torus $I=I_0$ lies in the interior action--angle band.
Let $(x(t),\xi(t),a(t))$, $0\le t\le T$, be one of its meridional
representatives with $a(0)\ne0$ and $a(0)\cdot\xi(0)=0$.  Then
\begin{equation}
  \left\|G(T)|_{H^s_{\rm axi}}\right\|_{{\rm ess},H^s}
  \ge
  \frac{|\xi(T)|^s|a(T)|}{|\xi(0)|^s|a(0)|},
  \qquad s\ge0.
  \label{eq:axisymmetric-essential-lower-bound}
\end{equation}
Moreover, for every $\eta>0$ there is a smooth, real-valued,
axisymmetric divergence-free field $v_0$ with $\|v_0\|_{H^s}=1$ such that
\begin{equation*}
  \|G(T)v_0\|_{H^s}
  \ge
  \frac{|\xi(T)|^s|a(T)|}{|\xi(0)|^s|a(0)|}-\eta.
\end{equation*}
\end{proposition}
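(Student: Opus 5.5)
The plan is to repeat the packet argument of Proposition~\ref{prop:exact-PDE-lower-bound-from-BAS}, with two changes. The point packet is replaced by an axisymmetric packet that is localized in the meridional variables and spread uniformly in $\beta$. The $L^2$ norm is replaced by the $H^s$ norm, handled through Lemma~\ref{lem:fixed-time-Euler-geometric-optics-transfer} with general $s$.

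\textbf{Construction of the packet.} Fix the meridional point $(\sigma_0,I_0)$ of the chosen representative. Pick $h\in C_0^\infty$ of $(\sigma,I)$, supported in a $\mathfrak r$-neighbourhood of $(\sigma_0,I_0)$ inside the band and independent of $\beta$. Set
\[
\phi:=S_0\circ\Phi^{-1}=n\sigma+\ell I .
\]
This phase is smooth and single-valued on the support after we fix a branch of $\sigma$ there. It is $\beta$-independent, so $\phi$ and $\dd\phi$ are invariant under rotations $\mathscr O_\alpha$. For the amplitude, let $A_0(x)$ be a smooth axisymmetric field with $A_0\cdot\dd\phi=0$. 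One choice is the rotation-equivariant extension of $a(0)$, projected onto $\dd\phi^\perp$ pointwise; it is nonvanishing near the orbit circle. Put $A=h A_0$ and
\[
f_j:=\PP[Ae^{ij\phi}],\qquad j\to\infty .
\]
Since $\PP$ commutes with rotations and $Ae^{ij\phi}$ is rotation invariant, $f_j\in H^s_{\rm axi}$. The field $f_j$ is complex. To obtain real data we take $\operatorname{Re} f_j$. This is still axisymmetric and divergence free, and $G(T)$ is real, so the conjugate packet contributes the same amplification and the cross terms are $o(1)$ by nonstationary phase, because $e^{2ij\phi}$ oscillates.

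\textbf{Amplification ratio and its limit.} Apply Lemma~\ref{lem:fixed-time-Euler-geometric-optics-transfer} to input and output, change variables by $X_T$ using $\det DX_T=1$, and divide. This gives
\[
\lim_{j\to\infty}\frac{\|G(T)f_j\|_{H^s}^2}{\|f_j\|_{H^s}^2}
=R_{\mathfrak r}^2
=\frac{\int |h|^2\,|\xi_T(x,\dd\phi)|^{2s}\,|\mathscr U(T;x,\dd\phi)A_0|^2\,\dd x}{\int |h|^2\,|\dd\phi|^{2s}\,|A_0|^2\,\dd x}.
\]
Both integrands are invariant under $\mathscr O_\alpha$, because the flow and the cocycle commute with rotations: $\xi_T$ and $\mathscr U$ transform equivariantly and norms are preserved. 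The integrals therefore reduce to meridional integrals weighted by the factor $2\pi$, which cancels in the ratio. The integrand is continuous in $(\sigma,I)$, so letting $\mathfrak r\downarrow0$ gives the limit
\[
\frac{|\xi(T)|^s\,|a(T)|}{|\xi(0)|^s\,|a(0)|},
\]
evaluated along any meridional representative, since all representatives give the same value.

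\textbf{Weak convergence and conclusion.} The normalized packets converge weakly to $0$ in $H^s$. This follows from the oscillating factor $e^{ij\phi}$ with $\dd\phi\ne0$, which I would check by testing against Schwartz functions and using Riemann--Lebesgue in local coordinates. Any compact operator $\mathcal C$ on $H^s_{\rm axi}$ then sends them to $0$, and the essential-norm bound follows exactly as before. The second assertion, existence of a real smooth $v_0$ with $\|v_0\|_{H^s}=1$ and $\|G(T)v_0\|_{H^s}$ within $\eta$ of the bound, comes from choosing $\mathfrak r$ small and then $j$ large. Smoothness holds because $\PP$ of a $C_0^\infty$ field is smooth and in every $H^s$.

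\textbf{Main obstacle.} The hard step is justifying Lemma~\ref{lem:fixed-time-Euler-geometric-optics-transfer} for a phase that is defined only on the support of $A$. This is fine because $A$ has compact support in a chart where $\sigma$ has a single branch. The same step requires the $H^s$ weight to come out exactly as $|\xi_T|^s$, including the factor $|\xi(0)|^s$ in the input normalization.
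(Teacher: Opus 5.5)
Your proposal is correct and follows essentially the same route as the paper. The paper also uses an axisymmetric packet localized in $(\sigma,I)$ with phase $n\sigma+\ell I$, an amplitude obtained from a smooth axisymmetric extension of $a(0)$ projected onto $\xi_{S_0}^\perp$, Lemma~\ref{lem:fixed-time-Euler-geometric-optics-transfer} for input and output, the change of variables with $\det D\Phi=\det DX_T=1$, the limit $\mathfrak r\downarrow0$, and weak convergence to zero to eliminate compact operators.

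The one genuine difference is the real packet. You take $\operatorname{Re}f_j$ directly, which reproduces the ratio exactly because $|\mathscr U\operatorname{Re}a(0)|^2+|\mathscr U\operatorname{Im}a(0)|^2=|\mathscr U a(0)|^2$. The paper instead picks whichever of $\operatorname{Re}a(0),\operatorname{Im}a(0)$ amplifies more and builds a cosine packet on it. Both work. The obstacle you flag is already covered, since the lemma only requires $\phi$ on a neighbourhood of $\operatorname{supp}A$ and directly outputs the weights $|\dd\phi|^s$ and $|\xi_T|^s$.
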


\begin{proof}
Choose an interval on $\mathbb T_\sigma=\mathbb R/(2\pi\mathbb Z)$ containing the
initial meridional angle $\sigma_0$ on which the quotient map is injective.
Consider $S_0=n\sigma+\ell I$ on this interval. Denote
\[  y_0=(\sigma_0,\beta_0,I_0), \qquad x(0)=\Phi(y_0), \]
and the physical initial covector field generated by the coordinate
phase $S_0$ by
\begin{equation*}
  \xi_{S_0}(y)
  :=\mathsf E(y)^{-\mathsf T}\dd_yS_0(y),
  \qquad y=(\sigma,\beta,I).
\end{equation*}
Thus $\xi_{S_0}(y_0)=\xi(0)$.  Choose
$h^{\rm axi}\in C_0^\infty(\R^2)$ with
$\|h^{\rm axi}\|_{L^2(\R^2)}=1$ and define
\begin{equation*}
  h_{\mathfrak r}^{\rm axi}(\sigma,I)
  :=\mathfrak r^{-1}
  h^{\rm axi}\!\left(
    \frac{\sigma-\sigma_0}{\mathfrak r},
    \frac{I-I_0}{\mathfrak r}
  \right).
\end{equation*}
For all sufficiently small $\mathfrak r>0$, $ h_{\mathfrak r}^{\rm axi}$ is
supported in the chosen interval and strictly inside the action band. We extend
$a(0)$ arbitrarily to a smooth
axisymmetric physical field $b(y)$ on this support with $b(y_0)=a(0)$ and define
\begin{equation}
  \widetilde a_0(y)
  :=b(y)-\frac{b(y)\cdot\xi_{S_0}(y)}{|\xi_{S_0}(y)|^2}
    \xi_{S_0}(y).
  \label{eq:axisymmetric-explicit-transverse-extension}
\end{equation}
Since $n\ne0$ and $\mathsf E(y)$ is invertible,
$\xi_{S_0}(y)\ne0$ throughout the chart.  Thus $\widetilde a_0$ is a smooth
axisymmetric physical amplitude satisfying
\begin{equation}
  \widetilde a_0(y_0)=a(0),
  \qquad
  \widetilde a_0(y)\cdot\xi_{S_0}(y)=0.
  \label{eq:axisymmetric-amplitude-extension}
\end{equation}
For integers $j\to\infty$, define
\begin{equation}
  f_{j,\mathfrak r}^{\rm axi}:=\PP\left[
    h_{\mathfrak r}^{\rm axi}(\sigma,I)
    \widetilde a_0(y)e^{ijS_0(y)}
  \right].
  \label{eq:axisymmetric-quantized-packets}
\end{equation}
One can verify that $e^{ijn\sigma}$ is single valued on the meridional circle due to the choice of an integer $j$.  We note $f_{j,\mathfrak r}^{\rm axi}$ is axisymmetric.

In view of \eqref{eq:LH-xi}-\eqref{eq:LH-a}, we have
\begin{equation}\label{U-homo}
  \mathscr U(T;x,j\xi)=\mathscr U(T;x,\xi), \qquad j>0.
\end{equation}
The principal cocycle of the conjugated evolution
$\langle D\rangle^sG(T)\langle D\rangle^{-s}$ is
\begin{equation}\label{Us-cocycle}
  \mathscr U_s(T;x,\zeta)
  =\left( \frac{|DX_T(x)^{-\mathsf T}\zeta|}{|\zeta|} \right)^s \mathscr U(T;x,\zeta).
\end{equation}

Define the
initial phase $\phi_0$ on the packet support and its transported phase by
\begin{equation*}
  \phi_0(\Phi(y)):=S_0(y),
  \qquad
  \phi_T(x):=\phi_0(X_{-T}(x)),
\end{equation*}
and the transported physical covector by
\begin{equation*}
  \xi_{S_0,T}(y)
  :=DX_T(\Phi(y))^{-\mathsf T}\xi_{S_0}(y).
\end{equation*}
We deduce 
\begin{equation*}
  \dd\phi_T(X_T(\Phi(y)))
  =DX_T(\Phi(y))^{-\mathsf T}
    \mathsf E(y)^{-\mathsf T}\dd_yS_0(y)
  =\xi_{S_0,T}(y).
\end{equation*}
The transversality in \eqref{eq:axisymmetric-amplitude-extension} verifies
the hypothesis of Lemma~\ref{lem:fixed-time-Euler-geometric-optics-transfer}.  So we apply Lemma~\ref{lem:fixed-time-Euler-geometric-optics-transfer} with the physical amplitude defined on the packet support by
\begin{equation*}
  A(\Phi(y))
  :=h_{\mathfrak r}^{\rm axi}(\sigma,I)\widetilde a_0(y).
\end{equation*}
It follows from \eqref{U-homo} and \eqref{Us-cocycle} that
\begin{align*}
 &\mathscr U_s(T;\Phi(y),j\xi_{S_0}(y))
       \bigl[|\xi_{S_0}(y)|^s\widetilde a_0(y)\bigr]\\
 &\quad=
   \left(\frac{|j\xi_{S_0,T}(y)|}{|j\xi_{S_0}(y)|}\right)^s
   |\xi_{S_0}(y)|^s
   \mathscr U(T;\Phi(y),j\xi_{S_0}(y))\widetilde a_0(y)\\
 &\quad=
   |\xi_{S_0,T}(y)|^s
   \mathscr U(T;\Phi(y),\xi_{S_0}(y))\widetilde a_0(y).
\end{align*}
For fixed $\mathfrak r$, $T$, and $s$, Lemma~\ref{lem:fixed-time-Euler-geometric-optics-transfer} implies
\begin{align}
  j^{-s}\bigl(\langle D\rangle^s
    f_{j,\mathfrak r}^{\rm axi}\bigr)(\Phi(y))
  &=e^{ijS_0(y)}|\xi_{S_0}(y)|^s
    h_{\mathfrak r}^{\rm axi}(\sigma,I)\widetilde a_0(y)
    +o_{L^2}(1),
    \label{eq:axisymmetric-Hs-WKB-input}\\
  j^{-s}\bigl(\langle D\rangle^sG(T)
    f_{j,\mathfrak r}^{\rm axi}\bigr)(X_T(\Phi(y)))
  &=e^{ijS_0(y)}|\xi_{S_0,T}(y)|^s
    h_{\mathfrak r}^{\rm axi}(\sigma,I)
    \mathscr U(T;\Phi(y),\xi_{S_0}(y))\widetilde a_0(y)
    +o_{L^2}(1).
    \label{eq:axisymmetric-Hs-WKB-output}
\end{align}
Here and below, the error terms $o_{L^2}(1)$ are understood as $j\to\infty$ with
$\mathfrak r$ and $T$ fixed. 

Applying the fact $\det D\Phi=\det DX_T=1$ and hence $\dd x=\dd\sigma\,\dd\beta\,\dd I$, we obtain
\begin{equation}
  \lim_{j\to\infty}
  \frac{\|G(T)f_{j,\mathfrak r}^{\rm axi}\|_{H^s}^2}
       {\|f_{j,\mathfrak r}^{\rm axi}\|_{H^s}^2}
  =R_{\mathfrak r,s}^{\rm axi}(T)^2
  \label{eq:axisymmetric-amplification-limit}
\end{equation}
with
\begin{equation*}
  \bigl(R_{\mathfrak r,s}^{\rm axi}(T)\bigr)^2
  :=\frac{\displaystyle\int
    |\xi_{S_0,T}(y)|^{2s}
    |\mathscr U(T;\Phi(y),\xi_{S_0}(y))
      \widetilde a_0(y)|^2
    |h_{\mathfrak r}^{\rm axi}(\sigma,I)|^2
    \,\dd\sigma\,\dd I}
  {\displaystyle\int
    |\xi_{S_0}(y)|^{2s}|\widetilde a_0(y)|^2
    |h_{\mathfrak r}^{\rm axi}(\sigma,I)|^2
    \,\dd\sigma\,\dd I}, \quad y=(\sigma,\beta_0,I).
\end{equation*}
Continuity of the finite-time flow and cocycle implies
\begin{equation*}
  \lim_{\mathfrak r\downarrow0}
  R_{\mathfrak r,s}^{\rm axi}(T)
  =\frac{|\xi(T)|^s|a(T)|}{|\xi(0)|^s|a(0)|}.
\end{equation*}

For fixed $\mathfrak r>0$, it follows 
\begin{equation*}
  \widehat f_{j,\mathfrak r}^{\rm axi}
  :=
  \frac{f_{j,\mathfrak r}^{\rm axi}}
       {\|f_{j,\mathfrak r}^{\rm axi}\|_{H^s}}   \rightharpoonup0
  \quad\text{in }H^s_{\rm axi}
  \qquad\text{as }j\to\infty,
\end{equation*}
and consequently, 
\begin{equation*}
  \|\mathcal C\widehat f_{j,\mathfrak r}^{\rm axi}\|_{H^s}
  \longrightarrow0, \qquad \mbox{for every compact operator} \quad \mathcal C:H^s_{\rm axi}\to H^s_{\rm axi}.
\end{equation*}
Then combined with \eqref{eq:axisymmetric-amplification-limit} we infer
\begin{align*}
  \left\|G(T)|_{H^s_{\rm axi}}-\mathcal C\right\|_{H^s\to H^s}
  &\ge
  \limsup_{j\to\infty}
  \left\|
    (G(T)-\mathcal C)
    \widehat f_{j,\mathfrak r}^{\rm axi}
  \right\|_{H^s} 
  \ge R_{\mathfrak r,s}^{\rm axi}(T).
\end{align*}
Taking the infimum over compact $\mathcal C$ and then letting
$\mathfrak r\to 0$ yields
\begin{equation*}
  \left\|G(T)|_{H^s_{\rm axi}}\right\|_{{\rm ess},H^s}
  \ge
  \lim_{\mathfrak r\downarrow0}
  R_{\mathfrak r,s}^{\rm axi}(T)
  =
  \frac{|\xi(T)|^s|a(T)|}
       {|\xi(0)|^s|a(0)|}
\end{equation*}
which implies \eqref{eq:axisymmetric-essential-lower-bound} over the complex space.

It remains to show the existence of a real testing sequence satisfying this lower
bound.  Assume $a(0)=u_0+iw_0$.  Since $\xi(0)$ is real, both $u_0$ and $w_0$
belong to $\xi(0)^\perp$, and we have
\begin{align*}
  |a(0)|^2&=|u_0|^2+|w_0|^2,\notag\\
  |a(T)|^2
  &=|\mathscr U(T;x(0),\xi(0))u_0|^2
    +|\mathscr U(T;x(0),\xi(0))w_0|^2.
\end{align*}
Therefore one of the nonzero vectors $u_0,w_0$, denoted by $b_0$, satisfies
\begin{equation}
  \frac{|\mathscr U(T;x(0),\xi(0))b_0|}{|b_0|}
  \ge \frac{|a(T)|}{|a(0)|}.
  \label{eq:real-amplitude-component-choice}
\end{equation}
We apply \eqref{eq:axisymmetric-explicit-transverse-extension} to extend $b_0$ to a
smooth real axisymmetric amplitude $\widetilde b_0(y)$ perpendicular to
$\xi_{S_0}(y)$, and define
\begin{equation}
  g_{j,\mathfrak r}^{\rm axi}
  :=\PP\left[
    h_{\mathfrak r}^{\rm axi}(\sigma,I)\widetilde b_0(y)
    \cos(jS_0(y))
  \right],
  \label{eq:axisymmetric-real-cosine-packets}
\end{equation}
which are smooth, real, axisymmetric, and divergence free.  Define the two complex WKB branches
\begin{equation*}
  g_{j,\mathfrak r}^{\rm axi,\pm}
  :=
  \PP\left[
    h_{\mathfrak r}^{\rm axi}(\sigma,I)
    \widetilde b_0(y)e^{\pm ijS_0(y)}
  \right].
\end{equation*}
Then
\begin{equation*}
  g_{j,\mathfrak r}^{\rm axi}
  =
  \frac12\left(
    g_{j,\mathfrak r}^{\rm axi,+}
    +g_{j,\mathfrak r}^{\rm axi,-}
  \right).
\end{equation*}
Since the bicharacteristic--amplitude system is invariant under
$\xi\mapsto-\xi$, we have
\begin{equation*}
  \mathscr U(T;x,-\xi)=\mathscr U(T;x,\xi),
\end{equation*}
and hence the two branches have the same amplification.  Moreover, it is clear that
\begin{equation*}
  \dd\phi_0(\Phi(y))=\xi_{S_0}(y)\ne0,
  \qquad
  \dd\phi_T(X_T(\Phi(y)))=\xi_{S_0,T}(y)\ne0,
\end{equation*}
since $X_T$ is a diffeomorphism.
One can verify that
\begin{align*}
  \left\langle
    g_{j,\mathfrak r}^{\rm axi,+},
    g_{j,\mathfrak r}^{\rm axi,-}
  \right\rangle_{H^s}
  &=o(j^{2s}),\\
  \left\langle
    G(T)g_{j,\mathfrak r}^{\rm axi,+},
    G(T)g_{j,\mathfrak r}^{\rm axi,-}
  \right\rangle_{H^s}
  &=o(j^{2s})
\end{align*}
as $j\to\infty$, with $\mathfrak r$ and $T$ fixed.  Moreover,
$g_{j,\mathfrak r}^{\rm axi,-}
=\overline{g_{j,\mathfrak r}^{\rm axi,+}}$, and the reality of $G(T)$ implies
\begin{equation*}
  G(T)g_{j,\mathfrak r}^{\rm axi,-}
  =
  \overline{G(T)g_{j,\mathfrak r}^{\rm axi,+}}.
\end{equation*}
It then follows
\begin{align*}
  \|g_{j,\mathfrak r}^{\rm axi}\|_{H^s}^2
  &=
  \frac12
  \|g_{j,\mathfrak r}^{\rm axi,+}\|_{H^s}^2
  +o(j^{2s}),\\
  \|G(T)g_{j,\mathfrak r}^{\rm axi}\|_{H^s}^2
  &=
  \frac12
  \|G(T)g_{j,\mathfrak r}^{\rm axi,+}\|_{H^s}^2
  +o(j^{2s})
\end{align*}
and hence by \eqref{eq:real-amplitude-component-choice}
\begin{align*}
  &\lim_{\mathfrak r\downarrow0}\lim_{j\to\infty}
  \frac{\|G(T)g_{j,\mathfrak r}^{\rm axi}\|_{H^s}^2}
       {\|g_{j,\mathfrak r}^{\rm axi}\|_{H^s}^2}=
  \frac{|\xi(T)|^{2s}
    |\mathscr U(T;x(0),\xi(0))b_0|^2}
  {|\xi(0)|^{2s}|b_0|^2}
  \ge
  \frac{|\xi(T)|^{2s}|a(T)|^2}
       {|\xi(0)|^{2s}|a(0)|^2}.
\end{align*}

Finally, we have
\begin{equation*}
  \widehat g_{j,\mathfrak r}^{\rm axi}
  :=
  \frac{g_{j,\mathfrak r}^{\rm axi}}
       {\|g_{j,\mathfrak r}^{\rm axi}\|_{H^s}}   \rightharpoonup0
  \quad\text{in the real axisymmetric subspace},
\end{equation*}
and hence
\begin{equation*}
  \|\mathcal C\widehat g_{j,\mathfrak r}^{\rm axi}\|_{H^s}
  \longrightarrow0, \qquad \mbox{for every compact operator} \quad \mathcal C:H^s_{\rm axi}\to H^s_{\rm axi},
\end{equation*}
which implies \eqref{eq:axisymmetric-essential-lower-bound} over the real space by taking first
$j\to\infty$ and then $\mathfrak r\to 0$. Given $\eta>0$, we may also choose first $\mathfrak r$ sufficiently small and
then $j$ sufficiently large so that $v_0:=\widehat g_{j,\mathfrak r}^{\rm axi}$ satisfies the final assertion of the proposition.
\end{proof}

\begin{proof}[Proof of
Theorem~\ref{thm:critical-torus-exponential-essential-instability}]
Proposition~\ref{prop:critical-torus-exponential-LH} shows that there exists an axisymmetric
ray whose reduced cylindrical cocycle is periodic and has Floquet rate
$\sigma_\ell>0$, with
\begin{equation*}
  |a(t)|\ge c_\ell e^{\sigma_\ell t}|a(0)|,
  \qquad t\ge0.
\end{equation*}
It follows from Proposition~\ref{prop:axisymmetric-fixed-time-realization} that, for fixed $s\ge0$ and every $t\ge0$,
\begin{equation}
  \left\|G_\iota(t)|_{H^s_{\rm axi}}\right\|_{{\rm ess},H^s}
  \ge
  c_\ell e^{\sigma_\ell t}
  \frac{|\xi(t)|^s}{|\xi(0)|^s}.
  \label{eq:critical-torus-essential-bound-with-covector}
\end{equation}
At $I=I_{\rm c}$, we have $q(t)=(n,0,\ell)$ from \eqref{eq:wave-covector}, while
$\sigma(t)=\sigma_0+\Omega_1(I_{\rm c})t$.  Since the cylindrical
representation of $\mathsf E(\sigma,\beta,I_{\rm c})$ is $2\pi$-periodic
in $\sigma$, the physical covector norm satisfies
\begin{equation*}
  |\xi(t+T_{0,c})|=|\xi(t)|.
\end{equation*}
Moreover, we have
\begin{equation*}
  \xi(t)=DX_t(x(0))^{-\mathsf T}\xi(0)\ne0,
\end{equation*}
since $X_t$ is a diffeomorphism and $\xi(0)\ne0$.  Continuity and periodicity therefore imply
\begin{equation*}
  \min_{0\le\tau\le T_{0,c}}|\xi(\tau)|>0.
\end{equation*}
Hence \eqref{eq:critical-torus-essential-bound-with-covector} implies
\begin{equation*}
  \left\|G_\iota(t)|_{H^s_{\rm axi}}\right\|_{{\rm ess},H^s}
  \ge c_{\ell,s}e^{\sigma_\ell t}
\end{equation*}
with $c_{\ell,s}:=c_\ell\min_{0\le\tau\le T_{0,c}}\frac{|\xi(\tau)|^s}{|\xi(0)|^s}>0$.
This concludes \eqref{eq:critical-torus-exact-exponential-essential-growth}.

For each fixed observation time $T$, we apply the final assertion of
Proposition~\ref{prop:axisymmetric-fixed-time-realization} with $\eta=\frac12c_{\ell,s}e^{\sigma_\ell T}$
and thus obtain a smooth real axisymmetric divergence-free field
$v_{0,T}$ with $\|v_{0,T}\|_{H^s}=1$ and
\begin{align}
  \|G_\iota(T)v_{0,T}\|_{H^s}
  &\ge
  \frac{|\xi(T)|^s|a(T)|}{|\xi(0)|^s|a(0)|}-\frac12c_{\ell,s}e^{\sigma_\ell T}\notag\\
  &\ge c_{\ell,s}e^{\sigma_\ell T}-\frac12c_{\ell,s}e^{\sigma_\ell T}
   =\frac12c_{\ell,s}e^{\sigma_\ell T}.
  \label{eq:critical-torus-real-packet-with-prefactor}
\end{align}
In the proof of Proposition~\ref{prop:axisymmetric-fixed-time-realization}, one first chooses the
meridional localization scale $\mathfrak r=\mathfrak r(T)$ sufficiently
small and then the integer $j=j(T,\mathfrak r)$
sufficiently large. For every $\delta>0$, one has
$\frac12c_{\ell,s}e^{\delta T}\ge1$ for all sufficiently large $T$. Therefore \eqref{eq:critical-torus-observation-time-real-packet} follows from \eqref{eq:critical-torus-real-packet-with-prefactor} for $0<\delta<\sigma_\ell$.
\end{proof}


\subsection{Growth from fixed initial data}

\begin{proof}[Proof of Corollary~\ref{cor:generic-fixed-Sobolev-datum}]
Throughout this proof, all operator norms are taken on $H^s_{\rm div}(\R^3;\R^3)$ unless otherwise
specified.
Fix a rational constant $\kappa$ with
$0<\kappa<\sigma_\ell$, and choose
\begin{equation*}
  0<\delta_\kappa<\sigma_\ell-\kappa.
\end{equation*}
Theorem~\ref{thm:critical-torus-exponential-essential-instability} and the
real-valued packets in
\eqref{eq:critical-torus-observation-time-real-packet} imply that at every
sufficiently large integer time $T$,
\begin{equation*}
  e^{-\kappa T}\|G_\iota(T)\|_{H^s\to H^s}
  \ge e^{(\sigma_\ell-\delta_\kappa-\kappa)T}.
\end{equation*}
Since $\sigma_\ell-\delta_\kappa-\kappa>0$, it follows that
\begin{equation}
  \sup_{\substack{T\in\mathbb N\\T\ge2}}
  \bigl\|e^{-\kappa T}G_\iota(T)\bigr\|_{H^s\to H^s}=\infty.
  \label{eq:weighted-propagators-not-uniformly-bounded}
\end{equation}

For $N\in\mathbb N$, define
\begin{equation*}
  E_{\kappa,N}
  :=\left\{v\in H^s_{\rm div}(\R^3;\R^3):
  \sup_{\substack{T\in\mathbb N\\T\ge2}}
  e^{-\kappa T}\|G_\iota(T)v\|_{H^s}\le N\right\}.
\end{equation*}
We note each $E_{\kappa,N}$ is closed. Indeed, it is the intersection, over the countable set of
integer times $T\ge2$, of the inverse images of the closed interval $[0,N]$
under the continuous maps
$v\mapsto e^{-\kappa T}\|G_\iota(T)v\|_{H^s}$.  We also claim $E_{\kappa,N}$ has empty interior.
Indeed, if it has non-empty interior, i.e.
$B(v_*,r)\subset E_{\kappa,N}$ for some $r>0$, then for every
$h\in H^s_{\rm div}$ with $\|h\|_{H^s}<r$, we have
\begin{equation*}
  e^{-\kappa T}\|G_\iota(T)h\|_{H^s}
  \le 2N,
  \qquad T\in\mathbb N,\quad T\ge2.
\end{equation*}
Choosing $h=(r/2)w$ and $\|w\|_{H^s}=1$, and then taking the
supremum first over $w$ and then over $T$ yields
\begin{equation*}
  \sup_{\substack{T\in\mathbb N\\T\ge2}}
  \bigl\|e^{-\kappa T}G_\iota(T)\bigr\|_{H^s\to H^s}
  \le \frac{4N}{r},
\end{equation*}
which contradicts \eqref{eq:weighted-propagators-not-uniformly-bounded}.  Thus
$E_{\kappa,N}$ is
nowhere dense, and
\begin{equation*}
  \mathcal R_{s,\kappa}
  :=H^s_{\rm div}(\R^3;\R^3)
  \setminus\bigcup_{N=1}^\infty E_{\kappa,N}
\end{equation*}
is residual.  Thus we have
\begin{equation}
  \sup_{\substack{T\in\mathbb N\\T\ge2}}
  e^{-\kappa T}\|G_\iota(T)v\|_{H^s}=\infty, \quad v\in\mathcal R_{s,\kappa}.
  \label{eq:fixed-datum-growth-below-rate}
\end{equation}

Now define
\begin{equation*}
  \mathcal R_s
  :=\bigcap_{\substack{\kappa\in\mathbb Q\\0<\kappa<\sigma_\ell}}
  \mathcal R_{s,\kappa}
\end{equation*}
which is residual as well.  Suppose that \eqref{eq:fixed-datum-subsequential-growth} is false
for some $v_0\in\mathcal R_s$. Then there is a rational number $\kappa$ such that
\begin{equation*}
    \limsup_{t\to\infty} \frac1t\log\bigl(1+\|G_\iota(t)v_0\|_{H^s}\bigr)<\kappa<\sigma_\ell.
\end{equation*}
It follows that for every sufficiently large integer $T$
\begin{equation*}
  \frac1T\log\bigl(1+\|G_\iota(T)v_0\|_{H^s}\bigr)<\kappa,
\end{equation*}
and hence
$e^{-\kappa T}\|G_\iota(T)v_0\|_{H^s}\le1$.  This contradicts \eqref{eq:fixed-datum-growth-below-rate}.  Hence we complete the proof of \eqref{eq:fixed-datum-subsequential-growth}.
Since $\sigma_\ell>0$, \eqref{eq:fixed-datum-subsequential-growth} also implies
$\sup_{t\ge0}\|G_\iota(t)v_0\|_{H^s}=\infty$.

Finally, we can repeat the same argument in the closed invariant real axisymmetric
subspace.  The real axisymmetric packets in
\eqref{eq:critical-torus-observation-time-real-packet} establish
\eqref{eq:weighted-propagators-not-uniformly-bounded} for the restricted
operator family, so the Baire argument above applies as well.
\end{proof}

\appendix

\section{Baldi formulas used in the small-action expansions}
\label{app:Baldi-formulas}

This appendix collects the formulas
from Baldi's construction that are used in the proofs of
Propositions~\ref{prop:first-geometric-expansion},~\ref{prop:negative-averaged-determinant},
and~\ref{prop:critical-torus-negative-determinant} in the notation of the present paper.
Only the identities and coefficients needed for these small-action
expansions are recorded; their construction and analytic justification are
contained in Sections~3.5--3.10 and 4.1--4.8 of \cite{Baldi2024}.  The
remainders below are uniform in the angular variables.  Baldi's analyticity
also allows differentiation and integration to every fixed order.

\textbf{Basic parameters and correction functions.}
Baldi denotes his unscaled level parameter by $c$ which has the form $c=h(I)$
after introducing the action variable.  With the pressure normalization used
in this paper, this parameter is denoted
\begin{equation*}
  \mathfrak c:=4\Kcal(I),
  \qquad \mbox{with} \quad
  \mu:=\sqrt{\mathfrak c},
  \qquad
  r:=\sqrt{2I}.
\end{equation*}
Baldi uses $\gamma_c(\vartheta)$ for the meridional level graph.  To keep it
distinct here from the azimuthal correction $\gamma(\sigma,I)$ in
\eqref{eq:Phi-explicit}, we denote the former by
$\gamma^{\rm mer}_{\mathfrak c}(\vartheta)$.  Baldi denotes the latter by
$\eta(\sigma,I)$.  Thus
\begin{equation*}
  \gamma^{\rm mer}_{\mathfrak c}(\vartheta)
  =\gamma_c(\vartheta)\ \text{in Baldi's notation},
  \qquad
  \gamma(\sigma,I)=\eta(\sigma,I)\ \text{in Baldi's notation}.
\end{equation*}

\textbf{The meridional level graph.}
Baldi introduces an analytic function $w(\vartheta,\mu)$ through
\begin{equation}
  \gamma^{\rm mer}_{\mathfrak c}(\vartheta)
  =\frac12\mu^2w(\vartheta,\mu)^2,
  \qquad
  \sqrt{2\gamma^{\rm mer}_{\mathfrak c}(\vartheta)}
  =\mu w(\vartheta,\mu).
  \label{eq:Baldi-appendix-level-graph}
\end{equation}
The coefficients needed below are
\begin{align*}
  P_3(\vartheta)
    &=2\sin\vartheta-\sin(3\vartheta),
  \\
  w(\vartheta,\mu)
    &=W_0+W_1(\vartheta)\mu+O(\mu^2),
  \qquad
  W_0=\frac1{\sqrt2},
  \qquad
  W_1(\vartheta)=-\frac18P_3(\vartheta).
\end{align*}
Here the formula for $P_3$ is Baldi's (4.19), while the level graph and the
coefficients of $w$ follow from (4.26), (4.28), (4.36)--(4.37), and (4.43).
Baldi's symmetry formula (4.27) states that
\begin{equation*}
  w(-\vartheta,-\mu)=w(\vartheta,\mu).
\end{equation*}
Consequently,
$\mathcal Q_j^{\rm mer}(-\vartheta)
=(-1)^j\mathcal Q_j^{\rm mer}(\vartheta)$; in particular, every odd
coefficient has zero angular average.
More precisely, the convergent level expansion has the form
\begin{equation}
  \begin{gathered}
    \gamma^{\rm mer}_{\mathfrak c}(\vartheta)
      =\sum_{j=2}^{\infty}
        \mathcal Q_j^{\rm mer}(\vartheta)\mu^j,
    \\
    \mathcal Q_2^{\rm mer}=\frac14,
    \qquad
    \mathcal Q_3^{\rm mer}(\vartheta)
      =-\frac{P_3(\vartheta)}{8\sqrt2},
    \qquad
    \frac1{2\pi}\int_0^{2\pi}
      \mathcal Q_4^{\rm mer}(\vartheta)\,\dd\vartheta=0.
  \end{gathered}
  \label{eq:Baldi-appendix-level-coefficients}
\end{equation}
These identities correspond to Baldi's (4.39) and (4.42)--(4.44).  Since
$\mathfrak c=\mu^2$, it follows
\begin{align}
  \partial_{\mathfrak c}
    \gamma^{\rm mer}_{\mathfrak c}(\vartheta)
  &=\frac14+\frac32\mathcal Q_3^{\rm mer}(\vartheta)\mu
    +2\mathcal Q_4^{\rm mer}(\vartheta)\mu^2+O(\mu^3)
  \notag\\
  &=\frac14-\frac{3\mu}{16\sqrt2}P_3(\vartheta)+O(\mu^2).
  \label{eq:Baldi-appendix-level-derivative}
\end{align}

\textbf{Straightening the meridional angle.}
Following Baldi's formulas (3.42), (3.51), and (3.52), define
\begin{equation}
  F_{\mathfrak c}(\vartheta)
  :=\int_0^\vartheta
     \partial_{\mathfrak c}
       \gamma^{\rm mer}_{\mathfrak c}(u)\,\dd u,
  \qquad
  f_{\mathfrak c}(\vartheta)
  :=\frac{2\pi F_{\mathfrak c}(\vartheta)}
          {F_{\mathfrak c}(2\pi)},
  \qquad
  g_{\mathfrak c}:=f_{\mathfrak c}^{-1}.
  \label{eq:Baldi-appendix-angle-straightening}
\end{equation}
The old meridional angle and the straightened angle are related by
\begin{equation*}
  \vartheta=g_{\mathfrak c}(\sigma),
  \qquad
  \sigma=f_{\mathfrak c}(\vartheta).
\end{equation*}
The higher-order coefficients in Baldi's construction yields the sharper
expansion of the unscaled meridional period factor
\begin{equation}
  \mathcal T(\mathfrak c):=F_{\mathfrak c}(2\pi)
  =\frac\pi2\left(
    1-\frac{3195}{16384}\mathfrak c^2+O(\mathfrak c^3)
  \right),
  \label{eq:Baldi-appendix-T-expansion}
\end{equation}
see Baldi's formulas (4.48)--(4.50).  The normalization
of the action variable shows the action-period identity (see Baldi's
(3.57) and (3.60)--(3.61))
\begin{equation}
  \frac{\dd\mathfrak c}{\dd I}\,\mathcal T(\mathfrak c)=2\pi.
  \label{eq:Baldi-appendix-action-period-identity}
\end{equation}
Moreover,
\begin{equation*}
  \int_0^\vartheta P_3(u)\,\dd u
  =\frac53-2\cos\vartheta+\frac13\cos(3\vartheta),
\end{equation*}
and hence
\begin{align}
  f_{\mathfrak c}(\vartheta)
  &=\vartheta-
    \frac{\mu}{4\sqrt2}
    \bigl(5-6\cos\vartheta+\cos(3\vartheta)\bigr)
    +O(\mu^2),
  \notag\\
  g_{\mathfrak c}(\sigma)
  &=\sigma+
    \frac{\mu}{4\sqrt2}
    \bigl(5-6\cos\sigma+\cos(3\sigma)\bigr)
    +O(\mu^2).
  \label{eq:Baldi-appendix-g-expansion}
\end{align}

It follows from the normalization of the action variable by Baldi's formula (4.58),
\begin{equation}
  \mathfrak c=4\Kcal(I)=4I+O(I^3),
  \qquad
  \frac{\dd\mathfrak c}{\dd I}=4+O(I^2),
  \label{eq:Baldi-appendix-action-normalization}
\end{equation}
 which combined with $r=\sqrt{2I}$ leads to
\begin{equation*}
  \mu=\sqrt2\,r+O(r^5).
\end{equation*}

\textbf{Reconstruction of the physical coordinates.}
On the level $\mathfrak c=4\Kcal(I)$, denote
\begin{equation*}
  \vartheta=g_{\mathfrak c}(\sigma),
  \qquad
  \upsilon=\gamma^{\rm mer}_{\mathfrak c}(\vartheta).
\end{equation*}
Baldi's physical reconstruction formulas are given by
\begin{equation}
  \rho(\sigma,I)
  =1+\sqrt{2\upsilon}\sin\vartheta,
  \qquad
  \zeta(\sigma,I)
  =\frac{\sqrt{2\upsilon}\cos\vartheta}{\rho(\sigma,I)}.
  \label{eq:Baldi-appendix-rho-zeta}
\end{equation}
The analytic function entering the azimuthal velocity has expansion
\begin{equation}
  \mathcal H(\mathfrak c)
  =4\mathfrak c-\frac{21}{2}\mathfrak c^2
   +\frac{39}{32}\mathfrak c^3+O(\mathfrak c^4),
  \label{eq:Baldi-appendix-H-expansion}
\end{equation}
which is formula (4.10) of \cite{Baldi2024}.  In the straightened variables,
Baldi's formulas (3.64), (3.71), and (3.72) become
\begin{align}
  Q(\sigma,I)
    &:=\frac{\sqrt{\mathcal H(\mathfrak c)}}
             {\rho(\sigma,I)^2},
  \notag\\
  Q_0(I)
    &:=\frac1{2\pi}\int_0^{2\pi}Q(\sigma,I)\,\dd\sigma,
  \qquad
  \widetilde Q(\sigma,I):=Q(\sigma,I)-Q_0(I),
  \notag\\
  \partial_\sigma\gamma(\sigma,I)
    &=\frac{\widetilde Q(\sigma,I)}
            {\dd\mathfrak c/\dd I}.
  \label{eq:Baldi-appendix-gamma-derivative}
\end{align}
Note $\widetilde Q$ is mean-zero in $\sigma$, so
\eqref{eq:Baldi-appendix-gamma-derivative} has a $2\pi$-periodic primitive.
Fixing the additive constant by $\gamma(0,I)=0$, we have
\begin{equation}
  \gamma(\sigma,I)
  =\left(\frac{\dd\mathfrak c}{\dd I}\right)^{-1}
    \int_0^\sigma
      \bigl(Q(\widetilde\sigma,I)-Q_0(I)\bigr)
      \,\dd\widetilde\sigma.
  \label{eq:Baldi-appendix-gamma-primitive}
\end{equation}
Together, \eqref{eq:Baldi-appendix-level-graph},
\eqref{eq:Baldi-appendix-g-expansion},
\eqref{eq:Baldi-appendix-action-normalization}, and
\eqref{eq:Baldi-appendix-rho-zeta}--
\eqref{eq:Baldi-appendix-gamma-primitive} are the external inputs
used to derive the first three expansions in
Proposition~\ref{prop:first-geometric-expansion}.  The action-period identity and the
sharp expansions \eqref{eq:Baldi-appendix-T-expansion} and
\eqref{eq:Baldi-appendix-H-expansion} are used in
Proposition~\ref{prop:negative-averaged-determinant}; those conclusions are then applied in
the cutoff argument in Proposition~\ref{prop:critical-torus-negative-determinant}.

\end{document}